\RequirePackage[T1]{fontenc}
\documentclass[12pt,twoside]{amsart}
\setlength{\textwidth}{6.45in}
\setlength{\textheight}{9.1in}
\calclayout
\usepackage[utf8]{inputenc}
\usepackage{amsmath}
\usepackage{amssymb}
\usepackage{mathtools}
\numberwithin{equation}{section}
\usepackage{slashed}
\usepackage{braket}
\usepackage[svgnames]{xcolor}
\usepackage{colortbl}
\definecolor{lightgray}{gray}{0.9}
\definecolor{xgray}{gray}{0.8}
\colorlet{conjcolor}{blue!5!white}
\usepackage[colorlinks,citecolor=DarkGreen,linkcolor=FireBrick,urlcolor=FireBrick,linktocpage,breaklinks=true]{hyperref}
\urlstyle{same}
\makeatletter
\pdfstringdefDisableCommands{\let\HyPsd@CatcodeWarning\@gobble}
\makeatother
\usepackage{cite}
\usepackage[pdftex]{graphicx}
\usepackage{tikz}
\usepackage{tikz-cd}
\usepackage{times}
\usepackage{courier}
\usepackage{bm}
\usepackage{subfig}
\usepackage{fnpct}

\usepackage{mathrsfs} 

\usepackage[all]{xy}
\usepackage{enumitem}

\if0
\usepackage{mdframed}

\fi


\theoremstyle{plain}
\newtheorem{defn}[equation]{Definition}

\newtheorem{thm}[equation]{Theorem}

\newtheorem{prop}[equation]{Proposition}

\def\PhysicsFact{Physics Assumption}

\newtheorem{fact?}[equation]{Fact?}
\newtheorem{cor}[equation]{Corollary}

\newtheorem{lem}[equation]{Lemma}

\newtheorem{conj}[equation]{Conjecture}

\theoremstyle{remark}
\newtheorem{rem}[equation]{Remark}

\usepackage[many]{tcolorbox}

\tcbset{
  breakable,
  colback=conjcolor,
  boxrule=-1pt,
  boxsep=1pt,
  left=2pt,right=2pt,top=2pt,bottom=2pt,
  oversize=2pt,
  sharp corners,
  before skip=\topsep,
  after skip=\topsep,
}
\tcolorboxenvironment{proofc}{}
\tcolorboxenvironment{defnc}{}
\tcolorboxenvironment{propc}{}
\tcolorboxenvironment{thmc}{}
\tcolorboxenvironment{corc}{}

\let\oldendrem\endrem
\def\endrem{\hfill {$\lrcorner$} \oldendrem}
\let\oldendconst\endconst
\def\endconst{\hfill {$\lrcorner$} \oldendconst}
\let\oldendphysremark\endphysremark
\def\endphysremark{\hfill {$\lrcorner$} \oldendphysremark}


\let\oldtext\text
\def\text#1{\oldtext{\upshape\mdseries #1}}
\def\Nequals#1{$\mathcal{N}{=}#1$}
\def\bC{\mathbb{C}}
\def\bH{\mathbb{H}}
\def\bR{\mathbb{R}}
\def\bZ{\mathbb{Z}}
\def\cD{\mathcal{D}}
\def\cI{\mathcal{I}}
\def\cO{\protect{\mathcal{O}}}
\def\cT{\mathcal{T}}
\def\fg{\mathfrak{g}}
\def\sG{\mathsf{G}}
\def\Ker{\mathop{\mathrm{Ker}}}

\begin{document}

\title[On $\#\{\Gamma\to G\}$]{On homomorphisms
from finite subgroups of $SU(2)$ \\[.5em]
to Langlands dual pairs of groups}
\author{Yuki Kojima}
\author{Yuji Tachikawa}
\date{ May, 2025}
\address{Kavli Institute for the Physics and Mathematics of the Universe \textsc{(wpi)},
  The University of Tokyo,
  5-1-5 Kashiwanoha, Kashiwa, Chiba, 277-8583,
  Japan
}
\email{yuki.kojima@ipmu.jp, yuji.tachikawa@ipmu.jp}
\def\funding{
The research of YK is supported in part by 
 the Programs for Leading Graduate Schools, MEXT, Japan, via
the Leading Graduate Course for Frontiers of Mathematical Sciences and Physics.
The research of YT is  supported 
in part  by WPI Initiative, MEXT, Japan through IPMU, the University of Tokyo,
and in part by Grant-in-Aid for JSPS KAKENHI Grant-in-Aid (Kiban-C), No.24K06883.
}
\thanks{\emph{Acknowledgments}: 
The authors thank Sunjin Choi for the collaboration on the physics parts of this work.
\funding
}

\begin{abstract}
Let $N(\Gamma,G)$ be the number of homomorphisms from $\Gamma$ to $G$ 
up to conjugation by $G$.
Physics of four-dimensional \Nequals4 supersymmetric gauge theories
predicts that $N(\Gamma,G)=N(\Gamma , \tilde G)$
when $\Gamma$ is a finite subgroup of $SU(2)$,
$G$ is a connected compact simple Lie group
and $\tilde G$ is its Langlands dual. 
This statement is known to be true when $\Gamma=\bZ_n$,
but the statement for non-Abelian $\Gamma$ is new, to the knowledge of the authors.
To lend credence to this conjecture,
we  prove this equality 
in a couple of examples, namely
$(G,\tilde G)=(SU(n),PU(n))$ and $(Sp(n),SO(2n+1))$ for arbitrary $\Gamma$, 
and $(PSp(n),Spin(2n+1))$ for exceptional $\Gamma$.

A more refined version of the conjecture, together with proofs of some concrete cases,
will also be presented.
The authors would like to ask mathematicians to provide a more uniform proof
applicable to all cases.
\end{abstract}
\maketitle

\tableofcontents


\section{Introduction}

\subsection{A conjecture}
Let $G$ be a connected compact simple Lie group,
and $T$ be the Cartan torus of $G$.
Its lattice of characters, $M$, sits between the root lattice $Q$ and the weight lattice $P$ of $G$:
\[
Q\subset M \subset P \subset \mathfrak{t}^*,
\]
where $\mathfrak{t}$ is the Lie algebra of $T$.
Then, the lattice $M^*$ of cocharacters (i.e.~the kernel of the exponentiation map $\exp: \mathfrak{t}\to T$)
is naturally dual to $M$ and satisfies \[
P^* \subset M^* \subset Q^* \subset \mathfrak{t}.
\]
Here, for a  free finitely-generated Abelian group $A$, we denote by $A^*$ the dual free finitely-generated Abelian group such that there exists a perfect pairing $A\times A^*\to \bZ$.

Now, it is well-known that there is a connected compact simple Lie group $\tilde G$,
uniquely determined up to isomorphism,
such that its root lattice is $P^*$, the weight lattice is $Q^*$,
and the character lattice of its Cartan torus is $M^*$.
The groups $G$ and $\tilde G$ are said to be Langlands dual to each other.
Some examples include  \[
(G,\tilde G)=(SU(n), PU(n)),\quad
(Sp(n),SO(2n+1)),\quad
(PSp(n),Spin(2n+1)).
\label{1.1}
\]

The Langlands dual groups appear not only in number theory but also in physics of
four-dimensional supersymmetric quantum field theory,
and in this paper we will be interested in the following conjecture coming from the latter context.
To state the conjecture, we need some definitions.
\begin{defn}
Let $\Gamma$ be a finite group,
and $G$ be a connected compact simple Lie group.
Let $\phi(\Gamma,G)$ be the set of homomorphisms $f:\Gamma\to G$,
and we define two such homomorphisms $f_1$ and $f_2$ to be equivalent, $f_1\sim f_2$,
when they are conjugate by the action of $G$, i.e.~there is an element $g\in G$
such that \[
f_2(\gamma)  = g f_1(\gamma) g^{-1},\qquad \text{for all}\quad \gamma\in\Gamma.
\]
We then let \[
\psi(\Gamma,G) =   \phi(\Gamma,G)/\sim 
\] and \[
N(\Gamma,G)= \#\psi(\Gamma,G),
\]
i.e.~the number of homomorphisms $f:\Gamma\to G$ up to conjugation by $G$.
\end{defn}

\begin{conj}
\label{conj:rough}
Let $\Gamma$ be a finite subgroup of $SU(2)$,
and $(G,\tilde G)$ to be a Langlands dual pair of connected compact simple Lie groups.
Then \[
N(\Gamma, G)=N(\Gamma,\tilde G).
\]
\end{conj}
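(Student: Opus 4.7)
The plan is to break the problem along two axes: the ADE classification of finite subgroups $\Gamma\subset SU(2)$, and the isogeny class of $G$. Since $N(\Gamma,G)$ depends only on the isomorphism class of $G$, it suffices to treat one representative of each Langlands dual pair.

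I would first dispose of the cyclic case $\Gamma=\bZ_n$, which is already established. Here $\psi(\bZ_n,G)$ is the set of conjugacy classes of $n$-torsion elements of $G$, and Kac's theorem parametrizes these by tuples of nonnegative integers $(s_0,\ldots,s_\ell)$ satisfying $\sum_i a_i s_i = n$, modulo symmetries of the affine Dynkin diagram. The equality $N(\bZ_n,G)=N(\bZ_n,\tilde G)$ then follows from the fact that the affine Dynkin diagram of $\tilde G$ is obtained from that of $G$ by reversing arrows and exchanging the marks $a_i$ with the comarks $a_i^\vee$, combined with the Langlands-duality of the underlying character and cocharacter lattices.

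For non-abelian $\Gamma$ I would use an explicit finite presentation. For the binary dihedral group of order $4n$, generated by $a,b$ with $a^{2n}=1$, $b^2=a^n$, and $bab^{-1}=a^{-1}$, the set $\phi(\Gamma,G)$ is parametrized by pairs $(A,B)\in G\times G$ satisfying the same relations. The element $A$ lies in some maximal torus $T$; the relation $BAB^{-1}=A^{-1}$ forces $B$ to represent the longest element in a Weyl-type extension normalizing $T$, and $B^2=A^n$ then cuts out a coset. Fixing the $G$-conjugacy class of $A$ via Kac's theorem, one enumerates the allowed $B$ modulo the centralizer $Z_G(A)$ and sums
\[
N(\Gamma,G)=\sum_{[A]}\#\bigl\{B \mid B^2=A^n,\ BAB^{-1}=A^{-1}\bigr\}\big/Z_G(A).
\]
The three exceptional binary polyhedral groups are handled analogously with an extra generator and the corresponding finite (if intricate) enumeration.

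The main obstacle, which the paper itself highlights as the motivation for asking for a uniform mathematician's proof, is that this case-by-case bookkeeping never manifests the Langlands duality intrinsically: the centralizers, normalizer extensions, and torsion subgroups of $T$ entering the count are genuinely different for $G$ versus $\tilde G$ (think $Sp(n)$ versus $SO(2n+1)$), and only after summation do the totals coincide. A uniform proof would almost certainly realize $\psi(\Gamma,G)$ and $\psi(\Gamma,\tilde G)$ as the points of a single moduli problem manifestly self-dual under Langlands---plausibly flat $G$-connections on the Seifert-fibered space $S^3/\Gamma$ interpreted through equivariant topological K-theory, or a McKay-type decoration of the affine Dynkin diagram that pairs $a_i$ with $a_i^\vee$ after a natural involution. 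I expect the real difficulty to lie in constructing such a moduli-theoretic bijection, rather than in the pair-by-pair verifications that form the bulk of the paper's contribution.
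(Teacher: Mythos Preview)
The statement you are attacking is a \emph{conjecture} in the paper and is not proved there in full generality; your proposal does not prove it either. For cyclic $\Gamma$ your appeal to Kac's parametrization can be made to work, but the paper's argument (following \cite{Djokovic}) is cleaner and avoids Dynkin combinatorics altogether: $N(\bZ_n,G)$ counts Weyl orbits on $\tfrac{1}{n}M^*/M^*$ while $N(\bZ_n,\tilde G)$ counts Weyl orbits on the Pontryagin-dual group $\tfrac{1}{n}M/M$, and dual $W$-sets have the same number of orbits because $\dim V(A)^W=\dim V(A^\wedge)^W$ for any finite $W$-module $A$.

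For non-abelian $\Gamma$ your proposal contains a genuine gap: ``enumerate the allowed $B$ modulo the centralizer and sum'' is a description of the counting problem, not an argument that the two sums agree. To turn this into a proof you would have to carry out the enumeration for \emph{every} Langlands-dual pair $(G,\tilde G)$ and \emph{every} non-cyclic $\Gamma$ and then verify equality case by case. The paper does precisely this for $(SU(n),PU(n))$, $(Sp(n),SO(2n+1))$, and partially for $(PSp(n),Spin(2n+1))$, and already there the required input goes well beyond your sketch---the McKay map $\psi(\Gamma,U(n))\simeq R(\hat\fg_n)$ together with the fact that the modular $S$-matrix conjugates the outer-automorphism and center actions in the first case, and nontrivial generating-function identities (Propositions~\ref{prop:K-formula}--\ref{prop:K-formula-3}) in the second. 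The remaining pairs (intermediate quotients of $Spin(2n)$, $E_6$ versus $E_6/\bZ_3$, and so on) are treated neither by the paper nor by your outline, and the conjecture remains open for them.
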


This is known to be true when $\Gamma=\bZ_n$, see \cite[Theorem 1]{Djokovic},
but this statement for non-Abelian $\Gamma$ is new, to the knowledge of the authors.
Our first aim of this paper is to provide evidence to this conjecture, by providing proofs 
of this conjecture in the following cases:
\begin{itemize}
\item $(G,\tilde G)=(SU(n),PU(n))$ for arbitrary $\Gamma$,
\item $(G,\tilde G)=(Sp(n),SO(2n+1))$ for arbitrary $\Gamma$,  and
\item $(G,\tilde G)=(PSp(n),Spin(2n+1))$ for exceptional $\Gamma$.
\end{itemize}

\begin{rem}
Although the conjecture as formulated above does not seem to be previously given in the literature,
the structure of $\psi(\Gamma,G)$ for some choices of $\Gamma$ and $G$ 
has been studied  in the past.
The paper \cite{Djokovic} given above is one.
The book \cite{Kac} has a very concrete method to describe each element of $\psi(\bZ_n,G)$ when
$G$ is of the adjoint type using the Dynkin diagram of $G$,  in its Sec.~8.6.
The studies \cite{FreyE6,Frey,FreyE7,Frey:2018vpw} have descriptions of $\psi(\Gamma,G)$ when $\Gamma$ is the binary icosahedral group and $G=E_6$, $E_7$ and $E_8$.
The last paper mentioned also has a description of $\psi(\Gamma,E_8)$
when $\Gamma$ is  some of the binary dihedral groups.
\end{rem}
\begin{rem}
As the partial proofs of this conjecture we give below will amply show, 
we do \emph{not} expect that there is a natural
bijection between $\psi(\Gamma,G)$ and $\psi(\Gamma,\tilde G)$.
It seems to be more like a discrete Fourier transformation on these two sets.
\end{rem}

\subsection{A more refined conjecture}

There is actually a finer version of the conjecture.
To state it, we need a few more preparations.  
Below, we use $A^\wedge$ to denote the Pontryagin dual of a finite Abelian group $A$. 
We denote the natural pairing $A^\wedge \times A\to U(1)=\{ z \mid |z|=1 \}$
by $(-,-)$.
Let $\Gamma$ be a finite group,
and $G$ be a group,
and $Z\subset G$ is an Abelian subgroup of the center of $G$.

Recall that elements of $H^2(B\Gamma;Z)$ classifies  central extensions \begin{equation}
\label{eq:ext}
0\to Z\to \check\Gamma\xrightarrow{p} \Gamma\to 0.
\end{equation} We pick and fix a particular $\check\Gamma$ for each class $w\in H^2(B\Gamma;Z)$.
Let us denote this extension by $\check\Gamma_{Z,w}$.
\begin{defn}
A \emph{$(Z,w)$-twisted homomorphism from $\Gamma$ to $G$}
is a homomorphism $f:\check\Gamma_{Z,w}\to G$ such that 
$Z\subset \check\Gamma_{Z,w}$ is mapped identically to $Z\subset G$.
We denote the set of $(Z,w)$-twisted homomorphisms from $\Gamma$ to $G$ by $\phi_{Z,w}(\Gamma,G)$.
We let $\psi_{Z,w}(\Gamma,G)$ be its quotient by $\sim$.
\end{defn}
\begin{rem}
\label{rem:triv}
It is clear that $\phi_{Z,0}(\Gamma,G)=\phi(\Gamma,G)$
and $\psi_{Z,0}(\Gamma,G)=\psi(\Gamma,G)$.
\end{rem}

Recall that elements of $H^1(B\Gamma;Z)$ are homomorphisms $z:\Gamma\to Z$.
Given a $w$-twisted homomorphism $f:\check\Gamma_{Z,w}\to G$,
we define $zf:\check\Gamma_{Z,w}\to G$ be the homomorphism defined by
\begin{equation}
zf(\bar\gamma) := z(\gamma)f(\bar\gamma),
\end{equation}  where $\gamma=p(\bar\gamma)$.
This defines an action of $H^1(B\Gamma,Z)$ on $\phi_{Z,w}(\Gamma,G)$ and $\psi_{Z,w}(\Gamma,G)$.

\begin{defn}
Let $V_{Z,w}(\Gamma,G)$  be a $\bC$-vector space with basis vectors $v([f])$ for $[f]\in\psi_{Z,w}(\Gamma,G)$,
and \[
V_Z(\Gamma,G):=\bigoplus_{w\in H^2(B\Gamma;Z)} V_{Z,w}(\Gamma,G).
\]
Let $z\in H^1(B\Gamma;Z)$ act on $V_Z(\Gamma,G)$ by \[
zv([f]) := v([zf]),
\]
and let $\hat w\in H^2(B\Gamma;Z)^\wedge$ act on $V_{Z,w}(\Gamma,G)$ by \[
\hat w v([f])= (\hat w,w) v([f]).
\]
This makes $V_Z(\Gamma,G)$ into a representation of \[
F(\Gamma;Z)  := H^1(B\Gamma;Z)\times H^2(B\Gamma;Z)^\wedge.
\]
\end{defn}

Let us now specialize to the case when $\Gamma$ is a finite subgroup of $SU(2)$.
$\Gamma$ has a natural action on $\bC^2$ coming from the embedding $\Gamma\subset SU(2)$.
It therefore acts on the unit sphere $S^3$ of $\bC^2$.
$S^3\to S^3/\Gamma$ is a principal $\Gamma$-bundle and determines a map $S^3/\Gamma\to B\Gamma$. 
It can be checked that, via this map $S^3/\Gamma\to BG$,
$H^1(B\Gamma;Z)$ and $H^2(B\Gamma;Z)$ 
pulls back isomorphically to $H^1(S^3/\Gamma;Z)$ and $H^2(S^3/\Gamma;Z)$, respectively.
Now, using Poincar\'e duality on $S^3/\Gamma$, 
one has $H^1(S^3/\Gamma;Z) \simeq H^2(S^3/\Gamma;Z^\wedge)^\wedge$.
Combining, we conclude that there is a naturally defined isomorphism
\begin{equation}
\iota_Z: H^1(B\Gamma;Z) \xrightarrow{\sim} H^2(B\Gamma;Z^\wedge)^\wedge.
\end{equation}
\begin{defn}
\label{defn:swap}
We let \[
F(\Gamma;Z) := H^1(B\Gamma;Z)\times H^2(B\Gamma;Z)^\wedge,
\]
and define the \emph{swap} isomorphism \[
s:  F(\Gamma;Z) \xrightarrow{\sim}  F(\Gamma;Z^\wedge)
\] by demanding that it sends an element \[
(z,\iota_{Z^\wedge}(z')) \in F(\Gamma;Z)=H^1(B\Gamma;Z)\times H^2(B\Gamma;Z)^\wedge
\] to \[
(z',\iota_{Z}(z)) \in F(\Gamma;Z^\wedge)=H^1(B\Gamma;Z^\wedge)\times H^2(B\Gamma;Z^\wedge)^\wedge.
\]
\end{defn}

We can now state a refinement of Conjecture~\ref{conj:rough}:
Let $G$ be a connected compact simple Lie group,
$Z\subset G$ be an Abelian subgroup of the center of $G$,
and $H=G/Z$,
so that we have a central extension of groups \[
0 \to Z \to G \to H \to 0.
\]
Then the Langlands duals $\tilde G$, $\tilde H$ of $G$, $H$ 
sit in a central extension 
\[
0\to Z^\wedge \to \tilde H \to \tilde G \to 0.
\]
Pick a finite subgroup $\Gamma$ of $SU(2)$.  Then
$V_Z(\Gamma,G)$ is a representation of $F(\Gamma;Z)$ 
and $V_{Z^\wedge}(\Gamma,\tilde H)$ is a representation of $F(\Gamma;Z^\wedge)$.
\begin{conj}
\label{conj:finer}
Under the setup above,
$V_Z(\Gamma,G)$ as a representation of $F(\Gamma;Z)$
and 
$V_{Z^\wedge}(\Gamma,\tilde H)$ as a representation of  $F(\Gamma;Z^\wedge)$
are equivalent when 
we identify the groups $F(\Gamma;Z)$ and $F(\Gamma;Z^\wedge)$
using the swap  isomorphism $s$.
\end{conj}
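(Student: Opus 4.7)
\emph{Reduction to characters.} Since $F(\Gamma;Z)$ is a finite abelian group, the isomorphism of representations asserted in Conjecture~\ref{conj:finer} is equivalent to an equality of characters. Writing $\hat{w}=\iota_{Z^\wedge}(z')$ with $z'\in H^1(B\Gamma;Z^\wedge)$, the character of $V_Z(\Gamma,G)$ at $(z,\hat{w})\in F(\Gamma;Z)$ decomposes as
\begin{equation*}
\chi_Z(z,z')\;=\;\sum_{w\in H^2(B\Gamma;Z)}(\iota_{Z^\wedge}(z'),w)\cdot\#\bigl\{[f]\in\psi_{Z,w}(\Gamma,G)\bigm|[zf]=[f]\bigr\},
\end{equation*}
and the swap isomorphism identifies the corresponding character on the dual side as an analogous sum $\chi_{Z^\wedge}(z',z)$ over $w'\in H^2(B\Gamma;Z^\wedge)$ weighted by $(\iota_Z(z),w')$. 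The refined conjecture thus reduces to the single identity $\chi_Z(z,z')=\chi_{Z^\wedge}(z',z)$ for all $(z,z')\in H^1(B\Gamma;Z)\times H^1(B\Gamma;Z^\wedge)$, which is visibly a discrete Fourier transform identity in the $H^2$-variable, in keeping with Remark 1.7.

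\emph{Explicit enumerations in the concrete cases.} I would then treat each pair $(G,\tilde{G})$ from the introduction by making the twisted sets $\psi_{Z,w}$ fully explicit. For $(SU(n),PU(n))$, elements of $\psi_{Z,w}(\Gamma,SU(n))$ are isomorphism classes of $n$-dimensional unitary representations of the fixed central extension $\check\Gamma_{Z,w}$ on which $Z$ acts via its tautological character; Schur's theory of projective representations decomposes these into partitions of $n$ indexed by irreducible $w$-projective representations of $\Gamma$, and the $H^1$-action acts by tensoring each summand by a $Z$-valued character of $\Gamma$. For $(Sp(n),SO(2n+1))$ and $(PSp(n),Spin(2n+1))$ one sets up analogous combinatorial models with quaternionic or orthogonal structures superimposed, combined with the McKay correspondence for the relevant binary polyhedral $\Gamma$. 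In parallel, carry out the mirror enumeration on the dual side $(\tilde{H},Z^\wedge)$, where the central extension of $\Gamma$ is now parametrised by $H^2(B\Gamma;Z^\wedge)$.

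\emph{Matching the Fourier transforms and the main obstacle.} The remaining task is to evaluate both Fourier sums and check the identity term-by-term. For $\Gamma=\bZ_n$ the computation essentially reduces to the analysis underlying \cite{Djokovic}, and the finite Fourier transform in $H^2(B\bZ_n;Z)$ is elementary. For non-abelian $\Gamma$, the expected mechanism is that the various central extensions $\check\Gamma_{Z,w}$ correspond via McKay to (possibly twisted) affine Dynkin diagrams, and the Langlands dual pairing between $(G,Z)$ and $(\tilde H,Z^\wedge)$ is mirrored by the standard duality between these diagrams; this should match the two Fourier-dual sums entry by entry. The hard part will be implementing this matching uniformly: the $H^2$-grading forces one to juggle several central extensions of $\Gamma$ simultaneously, and the interplay between affine-type duality and the finite-group Fourier transform must be made completely precise. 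This is exactly the sort of uniform argument the authors concede is missing, and a conceptually cleaner proof — perhaps via the physicists' interpretation of $V_Z(\Gamma,G)$ as a Hilbert space of line/surface defects in $4$D $\mathcal{N}{=}4$ super-Yang-Mills on $S^3/\Gamma$, where S-duality furnishes the swap directly — would likely bypass the case-by-case verification altogether.
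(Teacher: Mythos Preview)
The statement is a \emph{conjecture}, and the paper does not prove it in general; it only establishes the three special cases listed in the introduction. Your proposal is likewise not a proof: the reduction to the character identity $\chi_Z(z,z')=\chi_{Z^\wedge}(z',z)$ is correct and is a clean way to organise the problem, but after that you explicitly defer the substance (``the hard part will be implementing this matching uniformly''). So as a proof of the conjecture there is a genuine gap --- the same gap the paper itself leaves open.

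Comparing your outline with the paper's actual arguments in the proven cases, the main missing ingredient is concrete. For $(SU(n),PU(n))$ the paper does \emph{not} match the two sides by any general ``duality between affine Dynkin diagrams''; the key tool is the modular $S$-matrix of the affine Lie algebra $\hat\fg_n$, which (by a result recalled from \cite{DiFrancesco:1997nk}) conjugates the $A$-action and the $A^\wedge$-action on $V(\hat\fg_n)$, and hence on $V(\Gamma,U(n))$ via the McKay map. Your proposal gestures at affine combinatorics but never names this mechanism, and without it the ``Fourier matching'' step has no content. For $(Sp(n),SO(2n+1))$ and $(PSp(n),Spin(2n+1))$ with exceptional $\Gamma$, the paper abandons the Fourier-transform philosophy entirely and instead proves equalities of generating functions $\sum_n q^{2n}N(\Gamma,Sp(n))$ versus $\sum_n q^{2n+1}N(\Gamma,SO(2n+1))$ (and their refined analogues) by explicit rational-function identities; your ``quaternionic/orthogonal combinatorial models'' are in the right spirit for setting this up, but the verification is computational rather than conceptual. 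Only for $\Gamma=\bZ_n$ does your sketch essentially coincide with the paper's proof (Pontryagin duality of $\tfrac1n N^*/M^*$ and $\tfrac1n M/N$ followed by taking Weyl invariants).
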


\begin{rem}
It is unclear to the authors if such an equivalence is canonically defined,
since it turns out that the action $\rho$ of $F(\Gamma;Z)$ on $V_Z(\Gamma,G)$
is such that $\rho$ and $\rho\circ \text{inv}$ are equivalent, 
where $\text{inv}$ is the automorphism
of $F(\Gamma;Z)$ given by sending $a\mapsto a^{-1}$.
\end{rem}

Again, to provide evidence to this conjecture, we  provide proofs of Conjecture~\ref{conj:finer}
in a couple of cases:
\begin{itemize}
\item arbitrary $(G,\tilde G)$, $(H,\tilde H)$, $Z$ for $\Gamma=\bZ_n$, 
\item arbitrary $\Gamma$ for \[
(G,\tilde G)=(SU(n),PU(n)), \quad
(H,\tilde H)=(PU(n),SU(n)),\quad
Z=\bZ_n,\]
\item and exceptional $\Gamma$ for \[
(G,\tilde G)=(Sp(n),SO(2n+1)),\quad
(H,\tilde H)=(PSp(n),Spin(2n+1)),\quad
Z=\bZ_2.\]
\end{itemize}
We note that Conjecture~\ref{conj:rough} follows from Conjecture~\ref{conj:finer} by setting $Z=\{e\}$
and taking the dimension, since $N(\Gamma,G)=\dim V_{\{e\}}(\Gamma,G)$.
There is also a less obvious relation between Conjecture~\ref{conj:rough} 
and Conjecture~\ref{conj:finer}, as can be seen from the proposition below.

\begin{prop}
\label{prop:gauging}
There is a natural identification \[
\begin{aligned}
V_Z(\Gamma,G)^{H^2(BG;Z)^\wedge} &\simeq V_{\{e\}}(\Gamma,G),\\
V_Z(\Gamma,G)^{H^1(BG;Z)} &\simeq V_{\{e\}}(\Gamma,G/Z),
\end{aligned}
\]
where  $V^G$ for a space $V$ with a $G$ action means
the subspace of $V$ invariant under $G$.
\end{prop}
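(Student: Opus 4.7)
The plan is to unwind both identifications directly from the construction of the $F(\Gamma;Z)$-action on $V_Z(\Gamma,G)$. Neither statement requires input beyond the definitions and elementary obstruction theory for lifting homomorphisms through a central extension.

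For the first identification, observe that by construction $V_Z(\Gamma,G)=\bigoplus_w V_{Z,w}(\Gamma,G)$ is the weight-space decomposition for the $H^2(B\Gamma;Z)^\wedge$-action: the summand $V_{Z,w}(\Gamma,G)$ is the weight space for the character $\hat w\mapsto (\hat w,w)$. Since the pairing is perfect, this character is trivial precisely when $w=0$, so the invariants are $V_{Z,0}(\Gamma,G)$, which by Remark~\ref{rem:triv} equals $V_{\{e\}}(\Gamma,G)$.

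For the second identification, the main step is to exhibit a bijection between the $H^1(B\Gamma;Z)$-orbits on $\bigsqcup_w \psi_{Z,w}(\Gamma,G)$ and $\psi(\Gamma,G/Z)$. The natural map sends $[f]\mapsto [\bar f]$, where $\bar f:\Gamma\to G/Z$ is the composition of $f:\check\Gamma_{Z,w}\to G$ with $G\to G/Z$; this factors through $\check\Gamma_{Z,w}/Z=\Gamma$ because $f$ restricts to the identity on $Z$. The map is manifestly $H^1$-invariant. Two facts then make it a bijection on orbits: first, for each $\bar f\in\phi(\Gamma,G/Z)$, pulling back the extension $0\to Z\to G\to G/Z\to 0$ along $\bar f$ produces a class $w(\bar f)\in H^2(B\Gamma;Z)$ for which the set of lifts $f:\check\Gamma_{Z,w(\bar f)}\to G$ restricting to the identity on $Z$ is nonempty and an $H^1(B\Gamma;Z)$-torsor under $f\mapsto zf$; second, this torsor structure descends to the $G$-conjugation quotient, as argued below. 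Once the bijection is established, $\dim V_Z(\Gamma,G)^{H^1(B\Gamma;Z)}=|\psi(\Gamma,G/Z)|$, and the orbit-sum basis provides the canonical isomorphism to $V_{\{e\}}(\Gamma,G/Z)$.

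The main subtle step is verifying that the $H^1$-torsor structure behaves correctly modulo conjugation. Suppose $f_1,f_2$ lift $\bar f_1,\bar f_2$ with $\bar f_2 = h\bar f_1 h^{-1}$ for some $h\in G/Z$; I lift $h$ to $g\in G$ using surjectivity of $G\to G/Z$, and note that $gf_1g^{-1}$ is again a homomorphism $\check\Gamma_{Z,w(\bar f_1)}\to G$ that restricts to the identity on $Z$ (because $Z$ is central), with descent $\bar f_2$. Since $w(\bar f_1)=w(\bar f_2)$ by conjugacy-invariance of pulled-back cohomology classes, both $gf_1g^{-1}$ and $f_2$ are lifts of $\bar f_2$ into $\phi_{Z,w(\bar f_2)}(\Gamma,G)$, hence differ by a unique $z\in H^1(B\Gamma;Z)$. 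Thus $[f_1]$ and $[f_2]$ lie in the same $H^1$-orbit in $\bigsqcup_w\psi_{Z,w}(\Gamma,G)$, completing the argument. Everything else is bookkeeping.
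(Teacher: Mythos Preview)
Your proof is correct and follows essentially the same approach as the paper: the first identification is immediate from the weight-space decomposition, and the second comes from matching $H^1(B\Gamma;Z)$-orbits of twisted homomorphisms with homomorphisms to $G/Z$ via descent and lifting. You are in fact more careful than the paper on one point: the paper establishes the torsor statement only at the level of $\phi$ (actual homomorphisms) and leaves implicit the passage to conjugacy classes, whereas your final paragraph verifies directly that the fibers of $\bigsqcup_w\psi_{Z,w}(\Gamma,G)\to\psi(\Gamma,G/Z)$ are exactly the $H^1$-orbits.
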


\begin{proof}
To show the first isomorphism, note that $
V_Z(\Gamma,G)^{H^2(BG;Z)^\vee}
=V_{Z,0}(\Gamma,G)$. 
As $(Z,0)$-twisted homomorphisms are just genuine homomorphisms, 
we have $V_{Z,0}(\Gamma,G)=V_{\{e\}}(\Gamma,G)$.

To prove the second isomorphism, we first note that, by definition, 
any $(Z,w)$-twisted homomorphism $f$ from $\Gamma$ to $G$ descends to a genuine homomorphism from $\Gamma$ to $G/Z$.
Moreover, $f$ and $zf$ descends to the same homomorphism from $\Gamma$ to $G/Z$.

Conversely, given an arbitrary homomorphism $f: \Gamma\to G/Z$,
let us pick a point-wise lift of $f(\gamma)\in G/Z$ to $G$ for each $\gamma\in \Gamma$.
This defines a homomorphism from a certain extension $\check \Gamma$ of the form \eqref{eq:ext} to $G$.
This can be modified in a standard manner to a homomorphism $\check f$ 
from $\check\Gamma_w$ to $G$,
and the class $w\in H^2(B\Gamma;Z)$ is uniquely determined by $f$.
It is easy to check that two such $(Z,w)$-twisted homomorphisms $\check f$ and $\check f'$ from $\Gamma$ to $G$ 
come from a single $f:\Gamma\to G/Z$ if and only if there is a $z:\Gamma\to Z$ 
such that $\check f'=z\check f$.
\end{proof}

\begin{cor}
\label{cor:main}
Conjecture~\ref{conj:finer} for the data
$\Gamma$, $G$, $H{=}G/Z$, $\tilde H$, $\tilde G{=}\tilde H/Z^\wedge$
implies the equalities\[
N(\Gamma,G)=N(\Gamma,\tilde G),\qquad
N(\Gamma,H)=N(\Gamma,\tilde H),
\]i.e.~Conjecture~\ref{conj:finer} for the data $\Gamma$, $G$, $\tilde G$
and for the data $\Gamma$, $H$, $\tilde H$.
\end{cor}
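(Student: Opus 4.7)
The plan is to combine Proposition~\ref{prop:gauging} with Conjecture~\ref{conj:finer} by noting that the swap isomorphism $s$ of Definition~\ref{defn:swap} interchanges the two distinguished subgroups of $F(\Gamma;Z)$ and $F(\Gamma;Z^\wedge)$, so that taking invariants on one side corresponds to taking invariants under the ``opposite'' subgroup on the other.

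First, I would unpack how $s$ acts on these subgroups. An element of the form $(0,\hat w)\in H^2(B\Gamma;Z)^\wedge\subset F(\Gamma;Z)$ can be written uniquely as $(0,\iota_{Z^\wedge}(z'))$ with $z'\in H^1(B\Gamma;Z^\wedge)$, and Definition~\ref{defn:swap} then gives $s(0,\hat w)=(z',0)\in H^1(B\Gamma;Z^\wedge)\subset F(\Gamma;Z^\wedge)$. Symmetrically, $s$ sends $H^1(B\Gamma;Z)$ isomorphically onto $H^2(B\Gamma;Z^\wedge)^\wedge$. Thus the swap exchanges the ``$H^1$-factor'' with the ``$H^2$-dual factor''.

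Next, assuming Conjecture~\ref{conj:finer}, $V_Z(\Gamma,G)$ and $V_{Z^\wedge}(\Gamma,\tilde H)$ are equivalent as representations after identifying $F(\Gamma;Z)$ with $F(\Gamma;Z^\wedge)$ via $s$. Taking invariants under the subgroups matched by the previous step yields
\[
V_Z(\Gamma,G)^{H^2(B\Gamma;Z)^\wedge}\;\simeq\; V_{Z^\wedge}(\Gamma,\tilde H)^{H^1(B\Gamma;Z^\wedge)},
\]
\[
V_Z(\Gamma,G)^{H^1(B\Gamma;Z)}\;\simeq\; V_{Z^\wedge}(\Gamma,\tilde H)^{H^2(B\Gamma;Z^\wedge)^\wedge}.
\]
Now applying Proposition~\ref{prop:gauging} on both sides of the first line turns it into $V_{\{e\}}(\Gamma,G)\simeq V_{\{e\}}(\Gamma,\tilde H/Z^\wedge)=V_{\{e\}}(\Gamma,\tilde G)$, while the second line becomes $V_{\{e\}}(\Gamma,G/Z)=V_{\{e\}}(\Gamma,H)\simeq V_{\{e\}}(\Gamma,\tilde H)$. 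Since $\dim V_{\{e\}}(\Gamma,K)=N(\Gamma,K)$ by construction, taking dimensions gives the two desired equalities.

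There is no real obstacle here; this is essentially a bookkeeping argument using the definitions. The only point requiring care is the verification that $s$ really does exchange $H^1(B\Gamma;Z)$ with $H^2(B\Gamma;Z^\wedge)^\wedge$ and $H^2(B\Gamma;Z)^\wedge$ with $H^1(B\Gamma;Z^\wedge)$, but this is immediate from the formula in Definition~\ref{defn:swap}.
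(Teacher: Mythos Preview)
Your proposal is correct and is essentially the paper's own argument spelled out in full: the paper's proof is the single line ``Apply Proposition~\ref{prop:gauging} to Conjecture~\ref{conj:finer} and take the dimension,'' and your write-up unpacks exactly this, making explicit that the swap $s$ exchanges the subgroups $H^1(B\Gamma;Z)$ and $H^2(B\Gamma;Z)^\wedge$ with $H^2(B\Gamma;Z^\wedge)^\wedge$ and $H^1(B\Gamma;Z^\wedge)$ respectively, so that Proposition~\ref{prop:gauging} applied on each side yields the two identifications.
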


\begin{proof}
Apply Proposition~\ref{prop:gauging} to Conjecture~\ref{conj:finer} and take the dimension.
\end{proof}

\subsection{Physics background}
Physics derivations of Conjectures \ref{conj:rough} and \ref{conj:finer}
will be detailed in \cite{ChoiTachikawaToAppear};
the analysis there was motivated and heavily influenced by earlier physics papers \cite{Ju:2023umb,Ju:2023ssy}.
Roughly, for any given $G$, there is a four-dimensional \Nequals4 supersymmetric 
quantum field theory $T(G,g)$, where $g$ is the coupling constant.
It is believed that $T(G,g)\simeq T(\tilde G,1/g)$,
which is called the S-duality of these theories.
The  most common physics approach  to quantum field theory 
uses Taylor expansion in the coupling constant $g$,
and therefore this equality cannot be seen within such a standard approach.
This S-duality is a non-Abelian generalization of the electromagnetic duality of the 
ordinary electromagnetism, i.e. the Abelian $U(1)$ gauge theory.

$V_{\{e\}}(\Gamma,G)$ is then the lowest energy subspace of the Hilbert space of the theory $T(G,g)$ on $S^3/\Gamma$.
As $T(G,g)\simeq T(\tilde G,1/g)$, we should have $V_{\{e\}}(\Gamma,G)\simeq V_{\{e\}}(\Gamma,\tilde G)$,
and taking the dimension, we obtain Conjecture~\ref{conj:rough}.
The finer conjecture, Conjecture~\ref{conj:finer},
is obtained by taking into account the action of 
what are called \emph{1-form symmetries} of the four-dimensional quantum field theory,
and the swap isomorphism is related to the electromagnetic duality.
Then Proposition~\ref{prop:gauging} describes the process of the gauging of such 1-form symmetries.

\subsection{Organization of the paper}
The rest of the paper is organized as follows.
\begin{itemize}
\item We begin in Sec.~\ref{sec:known} by recalling the proof of Conjecture~\ref{conj:rough} for $\Gamma=\bZ_n$ found in \cite{Djokovic}.
We also generalize the proof in \cite{Djokovic} to prove the refined version of the conjecture,
Conjecture~\ref{conj:finer}, for $\Gamma=\bZ_n$ in this section.
\item Then in Sec.~\ref{sec:A}, we prove Conjecture~\ref{conj:rough} for arbitrary $\Gamma$
and$(G,\tilde G)=(SU(n),PU(n))$.
We also prove Conjecture~\ref{conj:finer} for $G=\tilde H=SU(n)$, $H=\tilde G=PU(n)$ and arbitrary $\Gamma$.
\item Then in Sec.~\ref{sec:BC}, we prove Conjecture~\ref{conj:rough} for arbitrary $\Gamma$
for $(G,\tilde G)=(Sp(n),SO(2n+1))$.
\item In the final section, Sec.~\ref{sec:final}, we  prove Conjecture~\ref{conj:finer} for 
$(G,\tilde G)=(Sp(n),SO(2n+1))$, $(H,\tilde H)=(PSp(n),Spin(2n+1))$, and $\Gamma=\hat\cO $.
\end{itemize}

Some comments are in order.
\begin{itemize}
\item The readers will see that the proofs in Sec.~\ref{sec:known} and Sec.~\ref{sec:A} are somewhat conceptual
and follow a similar approach.
Namely, we rewrite $N(\Gamma,G)$ 
as the dimensions of $V^K$,
where $V$ is an auxiliary vector space, $K$ is an auxiliary finite group acting on it, and $V^K$ is the fixed point subspace.
Similarly, we rewrite $N(\Gamma,\tilde G)$ as the dimension of $\tilde V^{\tilde K}$,
for a suitable choice of $\tilde V$ and $\tilde K$.
Then we will show that there is a linear isomorphism $f:V\to\tilde V$
intertwining the actions of $K$ and $\tilde K$.
The map $f$ works as a discrete Fourier transform.
It is to be noted that the choice of $V$ and $K$ for a given $(\Gamma,G)$ in
 Sec.~\ref{sec:known} and Sec.~\ref{sec:A} would be different even for
 $(\Gamma,G)$ common to both sections.
\item In contrast, our proofs in Sec.~\ref{sec:BC} and Sec.~\ref{sec:final} are more combinatorial,
and utilize  computations using generating functions.
It would be nice if the proofs here could be rephrased in  a form closer to the proofs in Sec.~\ref{sec:known}
and Sec.~\ref{sec:A}.
\item In Sec.~\ref{sec:A} and later, 
when we deal with general $\Gamma$, we will rely heavily on McKay correspondence
between finite subgroups of $SU(2)$ and ADE Dynkin diagrams.
\end{itemize}

The authors are theoretical physicists.
They would hope that some mathematicians reading this paper would get interested and
eventually find a more conceptual and uniform proof of the conjectures.

\section{The case $\Gamma=\bZ_n$, arbitrary $(G,\tilde G)$}
\label{sec:known}
\subsection{The proof of the basic conjecture}
We first reproduce the proof of the following theorem, originally found in \cite{Djokovic},
see Theorem 1 there.
This is a subcase of our Conjecture~\ref{conj:rough} when $\Gamma=\bZ_n$.
\begin{thm}
Conjecture~\ref{conj:rough} holds when $\Gamma=\bZ_n$. 
That is, we have $N(\bZ_n,G)=N(\bZ_n,\tilde G)$ for arbitrary Langlands dual pair, $(G,\tilde G)$.
\end{thm}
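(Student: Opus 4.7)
The plan is to prove the theorem via Burnside's lemma and Pontryagin duality. Since $G$ is connected and compact, every homomorphism $f\colon \bZ_n \to G$ is determined by the torsion element $f(1)\in G$ of order dividing $n$, and every such element is conjugate into a fixed maximal torus $T$; moreover two elements of $T$ are $G$-conjugate if and only if they lie in the same Weyl group orbit. Identifying $T = \mathfrak{t}/M^*$ yields $T[n]\simeq M^*/nM^*$, and Burnside's lemma gives
\[
N(\bZ_n,G) \;=\; \frac{1}{|W|}\sum_{w\in W}\bigl|(M^*/nM^*)^w\bigr|.
\]
Applying the same reasoning to $\tilde G$, whose Weyl group is again the common $W$ and whose cocharacter lattice is $M$, produces
\[
N(\bZ_n,\tilde G) \;=\; \frac{1}{|W|}\sum_{w\in W}\bigl|(M/nM)^w\bigr|.
\]
It therefore suffices to prove $|(M^*/nM^*)^w| = |(M/nM)^w|$ for every $w\in W$.

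The perfect $W$-invariant pairing $M\times M^*\to\bZ$ descends modulo $n$ to a perfect $W$-equivariant pairing $(M/nM)\times(M^*/nM^*)\to\bZ/n\hookrightarrow U(1)$, exhibiting these two finite abelian groups as Pontryagin duals of each other as $\bZ[W]$-modules. The remaining ingredient is the elementary fact that, for any finite abelian group $A$ with an automorphism $w$, one has $|A^w| = |(A^\wedge)^w|$. Indeed $\ker(1-w)$ and $\mathrm{coker}(1-w)$ have equal cardinality on a finite group, so $|A^w| = |A_w|$ where $A_w$ denotes the coinvariants; on the other hand the $w$-fixed characters of $A$ are precisely those factoring through $A_w$, giving $|(A^\wedge)^w| = |A_w|$. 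Combining these yields the desired equality of fixed-point counts and hence the theorem.

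There is no substantial obstacle: once the identification of the Cartan data of $\tilde G$ is made, the proof is a three-line reduction via Burnside followed by Pontryagin duality. The only point worth verifying carefully is the compatibility of the $W$-action with the mod-$n$ pairing, which is immediate from $W$-invariance of the integer pairing $M\times M^*\to\bZ$. Conceptually, the theorem expresses the fact that Langlands duality at the level of Cartan data is, for the purposes of counting $n$-torsion orbits, nothing but Pontryagin duality of the $n$-torsion subgroups of the two maximal tori equipped with the common Weyl-group action.
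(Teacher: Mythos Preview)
Your proof is correct and is essentially the same argument as the paper's. The paper phrases the orbit count as $\dim V(A)^W$ for the permutation representation $V(A)$ on $A=M^*/nM^*$, notes that $V(A^\wedge)\simeq V(A)^*$ via the Pontryagin pairing, and concludes by the equality $\dim V^W=\dim (V^*)^W$ for finite-group representations; your Burnside computation together with $|A^w|=|(A^\wedge)^w|$ is exactly the character-by-character version of this same identity.
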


\begin{proof}
Let $M^*$ and $M$ be the cocharacter lattice of $G$ and $\tilde G$, respectively.
Then clearly we have \[
\begin{aligned}
N(\bZ_n,G) &= \text{the number of $W$ orbits in}\ \tfrac1nM^*/ M^*,\\
N(\bZ_n,\tilde G) &= \text{the number of $W$ orbits in}\ \tfrac1nM/ M
\end{aligned}
\]
where $W$ is the Weyl group, common to both $G$ and $\tilde G$.
As $M$ and $M^*$ are dual to each other, 
$\tfrac1n M^*/M^*$ and $\tfrac 1n M/M\simeq M/nM$ are Pontryagin dual to each other.
In view of this fact, let us write $A=\tfrac1nM^*/ M^*$ and $A^\wedge=\tfrac 1nM/M$.
Let $V(A)$ be a complex vector space with basis $v(a)$ for each $a\in A$,
and similarly for $A^\wedge$.
We then have \[
	N(\bZ_n,G)=\dim V(A)^W, \qquad
	N(\bZ_n,\tilde G)=\dim V(A^\wedge)^W.
\]
As $V(A^\wedge)=V(A)^*$ and the actions of $W$ are conjugate, we have $N(\bZ_n,G)=N(\bZ_n,\tilde G)$.
\end{proof}

\subsection{The proof of the refined conjecture}

This proof can be readily generalized for the finer version of the conjecture:
\begin{thm}
\label{thm:first-finer}
Conjecture~\ref{conj:finer} holds when $\Gamma=\bZ_n$. 
\end{thm}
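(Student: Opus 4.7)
The plan is to upgrade the Fourier-style argument of the previous theorem into a $W$-equivariant discrete Fourier transform that respects the actions of $F(\bZ_n;Z)$ and $F(\bZ_n;Z^\wedge)$.

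First I would give a concrete parametrization of the twisted homomorphisms. Let $L$ denote the cocharacter lattice of $H=G/Z$, so that $M^*\subset L\subset P^*$, $Z=L/M^*$, and by Langlands duality the cocharacter lattice of $\tilde H$ is $L^*$ with $Z^\wedge=M/L^*$. A $(Z,w)$-twisted homomorphism $\check\bZ_n\to G$ is determined by the image of a lift of the generator, namely an element $g\in G$ with $g^n=z_w$ for a fixed lift $z_w\in Z$ of $w$. Conjugating into a maximal torus and writing $g=\exp(2\pi i x)$, the equation $g^n=z_w$ translates to the condition that $x\in A_G:=\tfrac1n L/M^*$ with $nx\in Z=L/M^*$ representing $w\in Z/nZ$. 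One thereby obtains a $W$-equivariant bijection
\[
\bigsqcup_{w\in Z/nZ}\phi_{Z,w}(\bZ_n,G)/\sim\ \longleftrightarrow\ A_G/W,
\]
where the $w$-sector corresponds to $(A_G)_w=\{x\in A_G:[nx]=w\}$. This identifies $V_Z(\bZ_n,G)\simeq\bC[A_G]^W$, and symmetrically $V_{Z^\wedge}(\bZ_n,\tilde H)\simeq\bC[A_{\tilde H}]^W$ for $A_{\tilde H}:=\tfrac1n M/L^*$.

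Next I would exhibit the Fourier transform. The map $(x,y)\mapsto e^{2\pi i n\langle x,y\rangle}$ is well-defined on $A_G\times A_{\tilde H}$ (since $\langle M^*,\tfrac1n M\rangle$ and $\langle\tfrac1n L,L^*\rangle$ both lie in $\tfrac1n\bZ$) and perfect (both groups have order $n^r|Z|$, with $r$ the rank), so it induces an isomorphism
\[
\mathcal{F}:\bC[A_G]\xrightarrow{\sim}\bC[A_{\tilde H}],\qquad v(x)\longmapsto\sum_{y\in A_{\tilde H}}e^{2\pi i n\langle x,y\rangle}v(y),
\]
which is $W$-equivariant because $W$ acts on the dual pair $(M,M^*)$ by isometries, and therefore descends to the Weyl-invariant subspaces. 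I would then verify that $\mathcal{F}$ intertwines the $F$-actions: the action of $z\in H^1(B\bZ_n;Z)=Z[n]$ on $\bC[A_G]$ is translation by the image of $z$ under the inclusion $Z[n]\hookrightarrow A_G$, and by the standard Fourier identity this conjugates under $\mathcal{F}$ to multiplication on $\bC[A_{\tilde H}]$ by the character $y\mapsto e^{2\pi i n\langle z,y\rangle}$; a direct computation identifies this scalar on the $w'$-sector with the Pontryagin pairing $(z,w')$ under the tautological isomorphism $Z[n]\simeq (Z^\wedge/nZ^\wedge)^\wedge=H^2(B\bZ_n;Z^\wedge)^\wedge$. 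The analogous check for $\hat w\in H^2(B\bZ_n;Z)^\wedge=(Z^\wedge)[n]$ is symmetric.

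To match with Definition~\ref{defn:swap} I would compute $\iota_Z$ explicitly for $\Gamma=\bZ_n$, showing by a cup-product calculation on the lens space $S^3/\bZ_n$ that it reduces to the tautological Pontryagin identification $Z[n]\simeq(Z^\wedge/nZ^\wedge)^\wedge$. With this in place, the swap $s$ literally interchanges the two factors of $F(\bZ_n;Z)$, matching exactly how $\mathcal{F}$ trades translation for character multiplication. The main obstacle I expect is the bookkeeping in the intertwining step: one must carefully track how the $H^1$- and $H^2$-actions pair with lattice elements, and in particular verify that the Poincar\'e-duality isomorphism defining $s$ agrees with the naive Pontryagin identification produced by Fourier. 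Once these identifications are aligned, the theorem follows from the classical Fourier duality between translation and character multiplication.
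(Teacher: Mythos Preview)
Your approach is essentially the paper's: both parametrize twisted homomorphisms via the finite group $A_G=\tfrac1nL/M^*$ (written $\tfrac1nN^*/M^*$ in the paper, with $L=N^*$), use its Pontryagin duality with $A_{\tilde H}=\tfrac1nM/L^*$ to build a $W$-equivariant Fourier transform, and check that this swaps the $H^1$- and $H^2$-actions. The one bookkeeping point you glossed over is that the sector $\{x\in A_G:[nx]=w\}$ is $|nZ|$ copies of $\psi_{Z,w}(\bZ_n,G)$ rather than one copy (each class $w\in Z/nZ$ has $|nZ|$ lifts to $Z$); the paper records this as $\bC[A_G]^W\simeq|nZ|$ copies of $V_Z(\bZ_n,G)$, and since $|nZ|=|nZ^\wedge|$ this common factor cancels and does not affect the conclusion.
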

\begin{proof}
Let us recall the setup. 
We have  two Langlands dual pairs $(G,\tilde G)$ and $(H,\tilde H)$ sitting in the sequences \[
0\to Z\to G\to H\to 0,\qquad
0\to Z^\wedge\to \tilde H\to\tilde G\to 0.
\] 
Denoting the Cartan torus of $G$ by $T_G$, etc., we also have \[
0\to Z\to T_G\to T_H\to 0,\qquad
0\to Z^\wedge\to T_{\tilde H}\to T_{\tilde G}\to 0.
\]
Denote by $M$ and $N$ the character lattices of $G$ and $H$, respectively.
Then we have \[
Q\subset N\subset M \subset P\subset \mathfrak{t}^*,\qquad
P^* \subset M^*\subset  N^* \subset Q^*\subset \mathfrak{t},
\] and \[
N^*/M^* \simeq Z, \qquad M/N \simeq Z^\wedge.
\]

Let us first study $V_{Z}(\bZ_n,G)$.
In this case, $ H^2(B\bZ_n,Z)\simeq Z/nZ$,
and  therefore a $w\in H^2(B\bZ_n,Z)$ is specified by 
an element $\tilde w\in Z$ modulo $nZ$. We pick $\tilde w$ for each $w$.

A $(Z,w)$-twisted homomorphism from $\bZ_n$ to $G$ is
a $(Z,w)$-twisted homomorphism from $\bZ_n$ to $T_G$ up to the action of the Weyl group $W$.
It is specified by the image $a$ of $1\in \bZ_n$ such that $na = \tilde w \in Z$.
Therefore, using \[
(n \times) : \tfrac 1n N^*/M^* \to N^*/M^*=Z,
\] we have \[
\psi_{Z,w}(\bZ_n,T_G) \simeq (n\times)^{-1}(\tilde w).
\]
On this set, the group $H^1(B\bZ_n,Z)= \Ker Z\rightarrow{n\times }Z$ 
acts by the addition. 

Introduce now a vector space $V(\frac 1n N^*/M^*)$ 
with a basis $v(a)$ for each element $a\in \frac1n N^*/M^*$.
Identify the dual $(Z/nZ)^\wedge$ of $Z/nZ$  with $\Ker Z^\wedge \xrightarrow{n\times} Z^\wedge$.
Pick, then, an element $b \in Z^\wedge$ such that $nb=0$,
and let it act on $V(\frac1n N^*/M^*)$ via \[
b v(a) \mapsto (b, na) v(a).
\]
This makes $V(\frac 1n N^*/M^*)$ into a representation of 
$F(\bZ_n,Z)=H^1(B\bZ_n,Z)\times H^2(B\bZ_n,Z)^\wedge$,
and now it is routine to check that \[
V(\tfrac 1n N^*/M^*)^W = \text{$|nZ|$ copies of}\ V_Z(\bZ_n, G),
\] as a representation of $F(\bZ_n,Z)$.

We can similarly show that \[
V(\tfrac 1n M/N)^W = \text{$|nZ^\wedge|$ copies of}\ V_Z(\bZ_n, \tilde H) 
\] as a representation of $F(\bZ_n,Z^\wedge)$.
Using the Pontryagin duality between $\tfrac 1n N^*/M^*$ and $\tfrac 1n M/N$,
it is straightforward to check that 
the actions of $F(\bZ_n,Z)$ and $F(\bZ_n,Z^\wedge)$ on 
$V(\tfrac 1n N^*/M^*)$ and $V(\tfrac 1n M/N)$, respectively,
are compatible under the swap isomorphism of Definition~\ref{defn:swap}.
All what is left is to take the $W$-invariant parts.
\end{proof}

\section{The case $(G,\tilde G)=(SU(n),PU(n))$, arbitrary $\Gamma$}
\label{sec:A}

\subsection{Properties of irreducible representations of $\Gamma$}
In our discussion below, we heavily utilize the McKay correspondence
concerning irreducible representations of  
finite subgroups of $SU(2)$ and the ADE Dynkin diagram \cite{McKay}. 
Before proceeding to the rest of the paper,
we provide here a minimal amount of information.

Let $\Gamma$ be a finite subgroup of $SU(2)$.
Let $V$ be its defining 2-dimensional representation, i.e.~the one coming from the embedding $\Gamma\subset SU(2)$.
Let $\Phi=\{\rho_i\}$ be the set of isomorphism classes of irreducible representations  of $\Gamma$.
We now consider a graph, whose vertices are $\rho_i$'s,
such that $\rho_i$ and $\rho_j$ are connected if and only if $\rho_i\otimes V$ contains $\rho_j$ as an irreducible component.
This is known to produce an extended Dynkin diagram of type $A$, $D$ or $E$.
The finite subgroups of $SU(2)$ are then as follows:
 \begin{equation}
\begin{array}{c|c|c|c|c}
\text{ADE} & \text{symbol} & \text{order} &\text{name}&   \text{presentation} \\
\hline
 A_{n-1} & \bZ_n  & n& \text{cyclic} & a^n =1   \\
 D_{n+2} & \hat\cD_n & 4n&\text{binary dihedral} & a^2 =b^n=(ab)^2 \\
 E_6 & \hat\cT &24& \text{binary tetrahedral } & a^3 = b^3 = (ab)^2 \\
 E_7 & \hat\cO&48& \text{binary octahedral } & a^4 = b^3 = (ab)^2 \\
 E_8 & \hat\cI  &120& \text{binary icosahedral } & a^5 = b^3 = (ab)^2 \\
\end{array}\ .
\end{equation}
We will refer to them 
mainly by the symbols in the second column.
The presentations given above go back to \cite{CoxeterBinary}.

We note that $\dim \rho_i$ equals the comark of the extended Dynkin diagram.
We will utilize various other correspondences of the properties of $\Gamma$
and $\fg$.
They will be introduced as they become needed in our discussions.

\subsection{The proof of the basic conjecture}

The first objective of this section is to prove Conjecture~\ref{conj:rough} for arbitrary $\Gamma$
in the case $(G,\tilde G)=(SU(n),PU(n))$.
We start with some preparations.

Let  $\Gamma$ be a finite subgroup of $SU(2)$.
Let $A$ be the set of characters of $\Gamma_{ab}$, the Abelianization of $\Gamma$.
We have $A^\wedge = \Gamma_{ab}$.

Note $\psi(\Gamma,U(n))$ is the set of isomorphism classes of $n$-dimensional
complex representations of $\Gamma$.
Therefore, $A$ naturally acts on $\psi(\Gamma,U(n))$ by tensor product.
Let $V(\Gamma,U(n))$ be the complex vector space 
with basis $v\in[f]$ for each $[f]\in \psi(\Gamma,U(n))$.
$A$ then acts on $V(\psi(\Gamma,U(n)))$.

We now construct an action of $A^\wedge$ on $V(\Gamma,U(n))$.
\begin{defn}
\label{defn:det}
We let $\det:\psi(\Gamma,U(n))\to A$ be the map which sends a representation $\rho: \Gamma\to U(n)$ 
to its determinant, i.e.~the composition $\Gamma\xrightarrow{\rho} U(n)\xrightarrow{\det}U(1)$.
\end{defn}
Then we define $a^\wedge\in  A^\wedge$ to act on $v([f])$ via \begin{equation}
a^\wedge v([f]) = (a^\wedge, \det([f])) v([f]). \label{eq:action}
\end{equation}
\begin{rem}
Note that the $A$ action and the $A^\wedge$ action do not generally commute.
Rather, there is an action of a finite Heisenberg group which is an extension of $A\times A^\wedge$.
\end{rem}

\begin{lem}
\label{lem:1}
$N(\Gamma,SU(n))=\dim V(\Gamma,U(n))^{A^\wedge}$.
\end{lem}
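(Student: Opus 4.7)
The plan is to unwind the definitions and identify the $A^\wedge$-invariant subspace of $V(\Gamma,U(n))$ with the set of $SU(n)$-conjugacy classes of homomorphisms $\Gamma\to SU(n)$.

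First I would observe that a basis vector $v([f])$ is $A^\wedge$-invariant precisely when $(a^\wedge,\det([f]))=1$ for every $a^\wedge\in A^\wedge$. Since the pairing $A^\wedge\times A\to U(1)$ is perfect, this is equivalent to $\det([f])$ being the trivial character of $\Gamma_{ab}$, i.e.\ to the representation $\rho=f:\Gamma\to U(n)$ having $\det\rho\equiv 1$, i.e.\ to the image of $\rho$ lying in $SU(n)$. Thus $\dim V(\Gamma,U(n))^{A^\wedge}$ equals the number of $U(n)$-conjugacy classes of homomorphisms $\Gamma\to U(n)$ whose image happens to lie in $SU(n)$.

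Next I would compare this count with $N(\Gamma,SU(n))=\#\psi(\Gamma,SU(n))$. There is an obvious map $\psi(\Gamma,SU(n))\to\psi(\Gamma,U(n))$ sending an $SU(n)$-conjugacy class to its $U(n)$-conjugacy class, and its image is clearly the set just described. The content is that this map is injective, i.e.\ that two homomorphisms $\rho_1,\rho_2:\Gamma\to SU(n)$ that are $U(n)$-conjugate are already $SU(n)$-conjugate. This is the step where one has to do a tiny bit of work, but it is easy: if $g\in U(n)$ conjugates $\rho_1$ to $\rho_2$, then choosing an $n$th root $w$ of $(\det g)^{-1}$ and setting $g'=wg$ gives $g'\in SU(n)$ with the same conjugation property, because the scalar $w$ commutes with everything. (Equivalently, $U(n)=SU(n)\cdot Z(U(n))$ and $Z(U(n))$ acts trivially by conjugation.)

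Combining these two observations yields $N(\Gamma,SU(n))=\dim V(\Gamma,U(n))^{A^\wedge}$, which is the lemma. The main (and only) subtle point is the passage from $U(n)$-conjugacy to $SU(n)$-conjugacy in the previous paragraph; everything else is a direct translation of the definitions of the $A^\wedge$-action and of $\det$.
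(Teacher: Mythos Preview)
Your proof is correct and is precisely the unwinding of definitions that the paper has in mind; the paper's own proof reads in its entirety ``Immediate.'' You have spelled out the one nontrivial detail---that $U(n)$-conjugacy of homomorphisms into $SU(n)$ implies $SU(n)$-conjugacy, via $U(n)=SU(n)\cdot Z(U(n))$---which the paper takes for granted.
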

\begin{proof}
Immediate.
\end{proof}

\begin{lem}
\label{lem:2}
$N(\Gamma,PU(n))=\dim V(\Gamma,U(n))^{A}$.
\end{lem}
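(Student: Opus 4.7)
The plan is to identify the set $\psi(\Gamma, PU(n))$ with the quotient $\psi(\Gamma, U(n))/A$, after which the lemma is immediate: since $A$ acts on $V(\Gamma, U(n))$ by permuting its basis $\psi(\Gamma, U(n))$, the dimension of the invariant subspace $V(\Gamma, U(n))^A$ is exactly the number of $A$-orbits on that basis, by an elementary Burnside-type count.

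The key input will be the vanishing $H^2(\Gamma; U(1)) = 0$ for every finite $\Gamma \subset SU(2)$. This is the place where the hypothesis $\Gamma \subset SU(2)$ (rather than an arbitrary finite group) is used in an essential way: for a general finite group with nontrivial Schur multiplier, there would be genuine projective representations of $\Gamma$ that do not arise from linear ones, so $N(\Gamma, PU(n))$ would strictly exceed $|\psi(\Gamma, U(n))/A|$. A clean justification of the vanishing uses Poincar\'e duality on the closed oriented $3$-manifold $S^3/\Gamma$: finiteness of $\Gamma$ gives $H^1(S^3/\Gamma;\bZ) = 0$, hence $H_2(S^3/\Gamma;\bZ) = 0$; since $\pi_2(S^3) = 0$ makes $S^3/\Gamma$ a $K(\Gamma,1)$ through degree $2$, this forces $H_2(\Gamma;\bZ) = 0$, and divisibility of $U(1)$ then yields $H^2(\Gamma; U(1)) = 0$. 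Alternatively one may invoke the classical fact that every binary polyhedral group has trivial Schur multiplier.

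Given this vanishing, consider the map $\pi: \psi(\Gamma, U(n)) \to \psi(\Gamma, PU(n))$ induced by post-composition with the projection $U(n) \to PU(n)$. A homomorphism $\bar f : \Gamma \to PU(n)$ pulls back the central extension $1 \to U(1) \to U(n) \to PU(n) \to 1$ to a central $U(1)$-extension of $\Gamma$ whose class lies in $H^2(\Gamma; U(1)) = 0$; a splitting of this extension is precisely a linear lift $f: \Gamma \to U(n)$ of $\bar f$, so $\pi$ is surjective. To identify the fibers, suppose $\pi([f_1]) = \pi([f_2])$. After lifting the conjugating element of $PU(n)$ to an element of $U(n)$ and conjugating $f_1$ accordingly, we may assume $\bar f_1 = \bar f_2$ pointwise in $PU(n)$, so $f_2(\gamma) = a(\gamma) f_1(\gamma)$ for some scalars $a(\gamma) \in U(1)$; the fact that both $f_1$ and $f_2$ are homomorphisms forces $a$ to be a homomorphism, i.e., $a \in A$, and $f_2 = a \otimes f_1$. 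Conversely, any such $a \in A$ preserves the fiber of $\pi$. Hence the fibers of $\pi$ are exactly the $A$-orbits on $\psi(\Gamma, U(n))$, and combining with the first paragraph gives $N(\Gamma, PU(n)) = |\psi(\Gamma,U(n))/A| = \dim V(\Gamma, U(n))^A$.

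The only non-formal ingredient is the Schur multiplier vanishing; everything else is a routine unpacking of the central extension $U(1) \to U(n) \to PU(n)$ together with a basis-permutation orbit count.
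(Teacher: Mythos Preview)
Your proof is correct and follows essentially the same approach as the paper: both use $H^2(B\Gamma;U(1))=0$ to lift any $\Gamma\to PU(n)$ to $\Gamma\to U(n)$, then identify the ambiguity in the lift as exactly the $A$-action, so that $N(\Gamma,PU(n))$ counts $A$-orbits on $\psi(\Gamma,U(n))$. The only difference is that you supply a proof of the Schur multiplier vanishing (via Poincar\'e duality on $S^3/\Gamma$) and are slightly more explicit about lifting the conjugating element from $PU(n)$ to $U(n)$, whereas the paper simply asserts $H^2(B\Gamma;U(1))=0$ and leaves these details implicit.
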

\begin{proof}
Any homomorphism $f:\Gamma\to PU(n)$ 
can be lifted to a homomorphism $\tilde f: \Gamma\to U(n)$,
since $H^2(B\Gamma,U(1))=0$.
Suppose now two such lifts, $\tilde f$ and $\tilde f'$, descend to the same $f$.
Define $z(g)\in U(1)$ for $g\in \Gamma$ by the condition $\tilde f(g)=z(g) \tilde f'(g)$.
Then $z\in A$.
Therefore, $N(\Gamma,PU(n))$ is the number of $A$ orbits in $\psi(\Gamma,U(n))$.
The statement immediately follows from this.
\end{proof}

To compare $\dim V(\Gamma,U(n))^{A^\wedge}$ and $\dim V(\Gamma,U(n))^A$,
we use the McKay correspondence.
Let $\fg$ be the ADE type of $\Gamma$.
Then the irreducible representations $\rho_i$ of $\Gamma$
form the extended Dynkin diagram of type $\fg$.

An element of $\psi(\Gamma,U(n))$ specifies an $n$-dimensional representation $\rho$ of $\Gamma$ up to isomorphism.
As such it is specified by the number $n_i$ of the copies of the irreducible representation $\rho_i$ it contains.
These non-negative integers  $n_i$ satisfy $n_i\dim \rho_i =n$.
This is exactly what specifies an irreducible integrable highest-weight representation $\lambda$ of 
the affine Lie algebra $\hat\fg$ of level $n$.

\begin{defn}
\label{defn:McKay}
We denote by \[
\iota: \psi(\Gamma,U(n)) \xrightarrow{\sim} R(\hat\fg_n)
\] the McKay correspondence between two finite sets described above,
and call it the McKay map.
\end{defn}

\begin{rem}
It might be worth mentioning here that 
a deeper connection between $\hat\fg_n$ and $U(n)$ bundles on $\bC^2/\Gamma$
is known to exist, see e.g.~\cite{Nakajima}.
\end{rem}

Note that $A$ is the subset of $\Phi$ such that the comarks are $1$.
It is known that $A$ is naturally the outer automorphism group of the affine Lie algebra $\hat\fg$,
and therefore acts on $R(\hat\fg_n)$.

\begin{lem}
\label{lem:3}
The $A$ action on $\psi(\Gamma,U(n))$ and the $A$ action on $R(\hat\fg_n)$ are compatible 
under the McKay map $\iota$.
\end{lem}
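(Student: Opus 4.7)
The plan is to show that, under $\iota$, both actions reduce to the same permutation of the common indexing set $\Phi=\{\rho_i\}$ of irreducible representations of $\Gamma$, equivalently the set of nodes of the extended Dynkin diagram of $\hat\fg$. Recall that an element of $\psi(\Gamma,U(n))$ is a tuple $(n_i)_{i\in\Phi}$ of multiplicities with $\sum_i n_i\dim\rho_i=n$, and $\iota$ sends this tuple to the level-$n$ integrable highest weight representation of $\hat\fg$ whose affine Dynkin labels are the same $(n_i)$ (using $\dim\rho_i=a_i^\vee$). For $a\in A$, tensoring by $a$ sends each irreducible $\rho_i$ to an irreducible of the same dimension, so there is a dimension-preserving permutation $\sigma_a:\Phi\to\Phi$ characterised by $a\otimes\rho_i\cong\rho_{\sigma_a(i)}$; in multiplicity coordinates, the $A$-action on $\psi(\Gamma,U(n))$ reads $(n_i)\mapsto(n_{\sigma_a^{-1}(i)})$.

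Because the functor $a\otimes(-)$ commutes with $(-)\otimes V$, the permutation $\sigma_a$ is an automorphism of the McKay graph preserving the comark labels, hence a diagram automorphism of the extended Dynkin diagram of $\hat\fg$, i.e.~an element of the outer automorphism group of $\hat\fg$. The latter acts on $R(\hat\fg_n)$ by the same kind of recipe: a diagram automorphism $\sigma$ permutes the affine fundamental weights, sending a level-$n$ weight with Dynkin labels $(n_i)$ to the level-$n$ weight with labels $(n_{\sigma^{-1}(i)})$. Once $A$ is identified with the outer automorphism group via $a\mapsto\sigma_a$, the two actions literally coincide on the tuple $(n_i)$, and the lemma follows.

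What remains is to verify that $a\mapsto\sigma_a$ is the ``natural'' identification of $A$ with the outer automorphism group of $\hat\fg$ implicit in the statement. Injectivity is immediate: if $\sigma_a$ fixes the node corresponding to the trivial representation, then $a$ itself is trivial. Surjectivity follows from a comparison of orders: $|A|=|\Gamma_{ab}|$ coincides with the order of the outer automorphism group of $\hat\fg$, both equal to the order of the center of the simply connected Lie group of type $\fg$ --- a standard consequence of McKay that can also be verified case by case from the ADE list. This identification step, or more conceptually the assertion that the tensor-product-induced homomorphism $A\to\mathrm{Out}(\hat\fg)$ agrees with the identification used to define the action on $R(\hat\fg_n)$, is the only non-tautological input and should be regarded as the main obstacle.
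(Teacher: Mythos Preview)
Your argument is correct and considerably more informative than the paper's own proof, which reads in full: ``Well known and can be checked by a case-by-case inspection.'' You supply the conceptual mechanism --- tensoring by a one-dimensional representation commutes with $(-)\otimes V$ and hence induces an automorphism of the McKay graph --- and you correctly isolate as the ``main obstacle'' the one non-tautological step, namely that the resulting diagram automorphism $\sigma_a$ coincides with the one assigned to $a$ under the paper's identification $A\cong\mathrm{Out}(\hat\fg)$. That residual step is precisely what the paper's case-by-case inspection amounts to, so the two proofs ultimately rest on the same verification; yours just makes explicit \emph{what} has to be checked and why the rest is formal.

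One small slip: the clause ``a diagram automorphism of the extended Dynkin diagram of $\hat\fg$, i.e.~an element of the outer automorphism group of $\hat\fg$'' overstates things, since the full automorphism group of the affine diagram is in general strictly larger than $\mathrm{Out}(\hat\fg)$ (for $\hat A_{n-1}$ it is dihedral of order $2n$ versus cyclic of order $n$; for $\hat D_4$ it is $S_4$ versus $\bZ_2\times\bZ_2$). This does no damage to your proof --- any diagram automorphism acts on Dynkin labels by permutation, which is all you actually use, and your final paragraph already addresses the matching with $\mathrm{Out}(\hat\fg)$ --- but the ``i.e.''\ should be dropped.
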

\begin{proof}
Well known and can be checked by a case-by-case inspection.
\end{proof}

It is also known that $A^\wedge=\Gamma_{ab}$ is naturally isomorphic to the center $\mathsf{Z}$ 
of the simply-connected group $\sG$ of type $\fg$.
Using this, we make the following definition:
\begin{defn}
\label{defn:det'}
The irreducible decomposition of an irreducible integrable highest weight representation of $\hat \fg_n$
contains only a single type of irreducible representation of $\mathsf{Z}$.
We let $\det'$ be the map which associates this element in $\mathsf{Z}^\wedge=A$
to an element in $R(\hat\fg_n)$.
\end{defn}

\begin{lem}
\label{lem:4}
The map $\det:\psi(\Gamma,U(n)) \to A$  of Definition~\ref{defn:det}
and the map $\det': R(\hat\fg_n)\to A$ of Definition~\ref{defn:det'}
are compatible under the McKay map $\iota$.
\end{lem}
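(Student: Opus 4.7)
The plan is to reduce the identity $\det=\det'\circ\iota$ on $\bigsqcup_n \psi(\Gamma,U(n))$ to the pointwise equality $\det(\rho_i)=[\omega_i]\in A$ for each node $i$ of the extended Dynkin diagram, and then to verify this pointwise equality by inspection for each finite subgroup of $SU(2)$.

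First I would observe that both sides are additive under direct sum. Writing $A$ additively, $\det(\rho\oplus\rho')=\det(\rho)+\det(\rho')$, so $\det(\bigoplus_i n_i\rho_i)=\sum_i n_i\det(\rho_i)$. On the affine side, every weight of $\fg$ appearing in the integrable highest weight representation $V_\lambda$ with $\lambda=\sum_i n_i\omega_i$ lies in the single coset $\bar\lambda=\sum_i n_i[\omega_i]\in P/Q$ (with the convention that $\omega_0=0$ as a finite weight), because all affine simple roots project into $Q$; hence $\det'(\lambda)$ equals this class and is additive in the Dynkin labels $(n_i)$. Since $\iota$ sends $\bigoplus_i n_i\rho_i$ to $\sum_i n_i\omega_i$, the claim reduces to showing that $\det(\rho_i)=[\omega_i]$ in $A=\mathsf{Z}^\wedge=P/Q$ for each $i$.

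For one-dimensional $\rho_i$ this is automatic: such $\rho_i$ are themselves elements of $A$ equal to their own determinants, and the restriction of $i\mapsto[\omega_i]$ to the comark-one nodes of the extended Dynkin diagram is precisely the standard identification of the outer automorphism group of $\hat\fg$ with $P/Q$. This already disposes of $\Gamma=\bZ_n$ (where all $\rho_i$ are one-dimensional) and $\Gamma=\hat\cI$ (where $A=\{e\}$ and the claim is vacuous).

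For the remaining cases $\Gamma=\hat\cD_n,\hat\cT,\hat\cO$ I would handle the higher-dimensional $\rho_i$ by direct computation: pick explicit matrix realizations of the generators $a,b$ from the presentation listed in the table, read off $\det\rho_i(a)$ and $\det\rho_i(b)$, and match the resulting character of $\Gamma_{ab}$ against the class $[\omega_i]\in P/Q$ read off from the extended Dynkin diagram of type $D_{n+2}, E_6, E_7$. The main obstacle is precisely that this final step is inherently case-by-case; a conceptual replacement—for instance, realizing $\det$ intrinsically on the affine side as the action of a distinguished central element of $\sG$ on a canonical weight subspace of $V_\lambda$—would eliminate the case analysis and would be preferable, in the spirit of the uniform proofs the authors are calling for in the introduction.
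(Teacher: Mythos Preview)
Your proposal is correct and takes essentially the same approach as the paper, whose proof consists of the single sentence ``Well known and can be checked by a case-by-case inspection.'' You have simply spelled out in more detail what that inspection amounts to: reduce to irreducibles by additivity, observe that the one-dimensional nodes fix the identification $A\simeq P/Q$, and then compute $\det\rho_i$ explicitly for the remaining higher-dimensional irreducibles (the paper itself carries out many of these determinant computations later, in Section~\ref{sec:BC}).
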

\begin{proof}
Well known and can be checked by a case-by-case inspection.
\end{proof}

We now introduce a vector space $V(\hat\fg_n)$ with basis $v(\lambda)$ 
for each $\lambda\in R(\fg_n)$.
$A$ and $A^\wedge$ naturally act on $V(\hat\fg_n)$.

\begin{lem}
\label{lem:combined}
The $A$ actions on  $V(\Gamma,U(n))$  and on $V(\fg_n)$ are compatible under the McKay map $\iota$.
Similarly, 
the $\hat A$ actions on  $V(\Gamma,U(n))$  and on $V(\fg_n)$ are compatible under the McKay map $\iota$.
\end{lem}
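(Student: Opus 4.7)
The plan is to observe that Lemma~\ref{lem:combined} is a formal linearization of Lemmas~\ref{lem:3} and \ref{lem:4}, requiring essentially no new input. The McKay map $\iota$ is a bijection $\psi(\Gamma,U(n))\xrightarrow{\sim} R(\hat\fg_n)$ between the indexing sets of the two bases, so it extends uniquely to a linear isomorphism $\bar\iota: V(\Gamma,U(n))\to V(\hat\fg_n)$ via $v([f])\mapsto v(\iota([f]))$. It then suffices to verify that $\bar\iota$ intertwines the $A$- and $A^\wedge$-actions on basis vectors.

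For the $A$-action, both sides act by permutation of the basis determined by the action of $A$ on the respective index set, so $A$-equivariance of $\bar\iota$ is equivalent to $A$-equivariance of $\iota$ as a map of sets, which is exactly Lemma~\ref{lem:3}. For the $A^\wedge$-action, both definitions are diagonal in the chosen bases: an element $a^\wedge\in A^\wedge$ scales $v([f])$ by $(a^\wedge,\det([f]))$ and scales $v(\lambda)$ by $(a^\wedge,\det'(\lambda))$. So $\bar\iota$ intertwines the two $A^\wedge$-actions precisely when $\det([f])=\det'(\iota([f]))$ for every $[f]$, which is the content of Lemma~\ref{lem:4}. The main combinatorial work, namely identifying the determinant of a representation with the central character of the associated level-$n$ affine weight, was already discharged in the case-by-case verifications of those two lemmas, so no further obstacle arises here.
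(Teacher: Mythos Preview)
Your proof is correct and matches the paper's approach exactly: the paper's proof is the single line ``Immediate from Lemma~\ref{lem:3} and Lemma~\ref{lem:4},'' and you have simply unpacked what ``immediate'' means by linearizing $\iota$ and checking equivariance on basis vectors via those two lemmas.
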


\begin{proof}
Immediate from Lemma~\ref{lem:3} and Lemma~\ref{lem:4}.
\end{proof}

\begin{defn}
We define the modular $S$-matrix \[
S: V(\hat\fg_n)\to V(\hat\fg_n)
\]
by $Sv(\lambda)=\sum_\mu S_{\lambda \mu} v(\mu)$, so that \[
S \chi_\lambda(-1/\tau)=\sum_\mu S_{\lambda \mu} \chi_\mu(\tau).
\]
Here $\chi_\lambda(\tau)$ 
is the character of the irreducible integrable highest-weight representation $\lambda$.
\end{defn}

\begin{prop}
The $A$ action on $V(\hat\fg_n)$ 
and the $A^\wedge$ action on $V(\hat\fg_n)$ is conjugate by the action of the modular $S$ matrix,
for a suitable identification $A\simeq A^\wedge$.
\end{prop}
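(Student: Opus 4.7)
The plan is to invoke the classical simple current formula for the modular $S$-matrix of $\hat\fg_n$, which is well-known in the theory of affine Lie algebras. First I would recall that for $J\in A$ acting on a level-$n$ highest weight $\lambda\in R(\hat\fg_n)$ via an outer automorphism of $\hat\fg$, this formula asserts
\begin{equation*}
S_{J\lambda,\mu} = e^{2\pi i\,\phi_J(\mu)}\,S_{\lambda,\mu},
\end{equation*}
where the phase $\phi_J(\mu)\in \bR/\bZ$ is bilinear in $J$ and $\mu$, and depends on $\mu$ only through its central character $\det'(\mu)\in A\simeq \mathsf{Z}^\wedge$ (see Definition~\ref{defn:det'}).

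Next, I would argue that the bilinear pairing $A\times A\to \bR/\bZ$, $(J,\det'(\mu))\mapsto \phi_J(\mu)$, provides the suitable identification $A\simeq A^\wedge$ appearing in the statement. Intrinsically this is the pairing on $P/Q$ induced by the normalized Killing form modulo $\bZ$, which is non-degenerate for every simply-laced $\fg$ and can be checked case by case (ADE).

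Granting these inputs, the proof reduces to a one-line computation. Writing $\hat J\in A^\wedge$ for the image of $J$ under this identification, one has $\hat J\,v(\mu) = e^{2\pi i\,\phi_J(\mu)} v(\mu)$ by construction and by Lemma~\ref{lem:4}, and then
\begin{equation*}
(S\circ J)\,v(\lambda)
= \sum_\mu S_{J\lambda,\mu}\,v(\mu)
= \sum_\mu e^{2\pi i\,\phi_J(\mu)}\,S_{\lambda,\mu}\,v(\mu)
= (\hat J\circ S)\,v(\lambda),
\end{equation*}
which yields $SJS^{-1}=\hat J$ as required.

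The hard part will be pinning down the simple current formula and the precise shape of $\phi_J$ — not the identity itself, which is standard, but making the matching between $\phi_J(\mu)$ and the pairing $(J,\det'(\mu))$ coming from the finite group side completely transparent. The most efficient route, I expect, is via the Kac--Peterson formula for $S_{\lambda,\mu}$: the shift $\lambda\mapsto J\lambda$ amounts to adding a minuscule fundamental weight $\omega_J$ to $\lambda+\rho$, and a short manipulation then extracts a phase of the form $e^{2\pi i(\omega_J,\bar\mu+\rho)}$, whose $\mu$-dependence visibly factors through the central character $\det'(\mu)$.
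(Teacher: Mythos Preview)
Your approach is correct and is precisely the content that the paper defers to: the paper's entire proof is the single citation ``See \cite[Sec.~14.6.4]{DiFrancesco:1997nk},'' and the simple-current identity $S_{J\lambda,\mu}=e^{2\pi i\,\phi_J(\mu)}S_{\lambda,\mu}$ you invoke is exactly what is established there. So your proposal is not a different route but rather an unpacking of the cited reference, and the one-line conjugation computation $S\circ J=\hat J\circ S$ is the intended argument.
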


\begin{proof}
See \cite[Sec.~14.6.4]{DiFrancesco:1997nk}.
\end{proof}

\begin{rem}
It seems to the authors that there is no canonical isomorphism $A\simeq A^\wedge$.
Two isomorphisms $A\xrightarrow{\sim}A^\wedge$ differing by
composing with $a\mapsto -a$ either on the side of $A$  or on the side of $A^\wedge$ seems
to give an equally good isomorphisms.
\end{rem}

\begin{cor}
\label{cor:conjugate}
The $A$ action and the $A^\wedge$ action on $V(\Gamma,U(n))$ are conjugate.
\end{cor}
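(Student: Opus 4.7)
The plan is to pull back the statement of the preceding Proposition from $V(\hat\fg_n)$ to $V(\Gamma,U(n))$ through the McKay correspondence. The McKay map $\iota$ of Definition~\ref{defn:McKay} is a bijection of finite sets, so it induces a linear isomorphism
\[
\iota_*: V(\Gamma,U(n)) \xrightarrow{\sim} V(\hat\fg_n)
\]
sending the basis vector $v([f])$ to $v(\iota([f]))$. By Lemma~\ref{lem:combined}, this isomorphism intertwines the $A$-actions on the two sides and, separately, the $A^\wedge$-actions on the two sides.

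By the Proposition, there exist an identification $\sigma: A \xrightarrow{\sim} A^\wedge$ and a linear automorphism $S$ of $V(\hat\fg_n)$ (the modular $S$-matrix) such that
\[
S \cdot \rho_A(a) \cdot S^{-1} = \rho_{A^\wedge}(\sigma(a)) \qquad \text{for all } a\in A,
\]
where $\rho_A$ and $\rho_{A^\wedge}$ denote the respective actions on $V(\hat\fg_n)$. Setting $\tilde S := \iota_*^{-1} \circ S \circ \iota_*$, we obtain a linear automorphism of $V(\Gamma,U(n))$ which, by the intertwining property of $\iota_*$, conjugates the $A$-action on $V(\Gamma,U(n))$ into the $A^\wedge$-action under the same identification $\sigma$. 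This is exactly the claim of the corollary.

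There is essentially no obstacle here; the content is already carried by the two ingredients cited above, namely the McKay-theoretic compatibility of Lemma~\ref{lem:combined} and the affine-Lie-algebra fact that conjugation by the modular $S$-matrix swaps the outer-automorphism action and the center action on $R(\hat\fg_n)$. The only mildly delicate point worth flagging in the write-up is that $\sigma: A\simeq A^\wedge$ is not canonical (as the Remark after the Proposition notes), so the statement should be read as asserting the existence of \emph{some} conjugating isomorphism $\tilde S$ together with \emph{some} identification of $A$ with $A^\wedge$, rather than a canonical one.
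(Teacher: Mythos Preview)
Your proof is correct and follows exactly the same approach as the paper, which simply records the corollary as ``Immediate from the proposition above and Lemma~\ref{lem:combined}.'' You have merely made explicit the transport of the conjugacy along the linear isomorphism $\iota_*$ induced by the McKay map, which is precisely what ``immediate'' means here.
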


\begin{proof}
Immediate from the proposition above and Lemma~\ref{lem:combined}.
\end{proof}

\begin{thm}
\label{thm:mainA}
We have $N(\Gamma,SU(n))=N(\Gamma,PU(n))$ for arbitrary finite subgroup $\Gamma$ of $SU(2)$.
\end{thm}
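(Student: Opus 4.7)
The plan is to observe that essentially all the work for this theorem has already been done in the preceding lemmas and corollary, and what remains is to stitch them together. Concretely, I would combine Lemma~\ref{lem:1} and Lemma~\ref{lem:2}, which rewrite the two counts of interest as
\[
N(\Gamma,SU(n)) = \dim V(\Gamma,U(n))^{A^\wedge}, \qquad
N(\Gamma,PU(n)) = \dim V(\Gamma,U(n))^{A},
\]
with Corollary~\ref{cor:conjugate}, which states that the $A$- and $A^\wedge$-actions on $V(\Gamma,U(n))$ are conjugate inside $GL(V(\Gamma,U(n)))$.

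The main observation then to invoke is the elementary linear-algebra fact that if two finite group actions $\rho_1,\rho_2$ on a finite-dimensional $\bC$-vector space $V$ are conjugate, i.e.~there is some $T\in GL(V)$ with $\rho_2(g) = T\rho_1(g)T^{-1}$ for all $g$, then $T$ restricts to an isomorphism $V^{\rho_1}\xrightarrow{\sim} V^{\rho_2}$ and in particular $\dim V^{\rho_1} = \dim V^{\rho_2}$. Applied to our situation with $T$ the operator implementing the conjugacy in Corollary~\ref{cor:conjugate} (which under the McKay map $\iota$ is the modular $S$-matrix), this gives
\[
\dim V(\Gamma,U(n))^{A^\wedge} = \dim V(\Gamma,U(n))^{A},
\]
from which the theorem follows instantly.

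There is no real obstacle at this stage; the substantive content has been deposited into Lemma~\ref{lem:combined} (compatibility of the McKay map with both group actions) and the proposition identifying the modular $S$-matrix as the intertwiner between the $A$- and $A^\wedge$-actions on $V(\hat\fg_n)$. The only choice to be made is cosmetic: one may prefer to phrase the final step abstractly (``conjugate representations have equal-dimensional fixed subspaces'') or concretely (``$S$ sends $A^\wedge$-invariants to $A$-invariants bijectively''), but both routes close the argument in a single line.
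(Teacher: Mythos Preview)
Your proposal is correct and follows essentially the same approach as the paper, which simply says the theorem follows from Lemma~\ref{lem:1}, Lemma~\ref{lem:2}, and Corollary~\ref{cor:conjugate}. Your added remark that conjugate actions have fixed subspaces of equal dimension just makes explicit the elementary step the paper leaves implicit.
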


\begin{proof}
This follows from Lemma~\ref{lem:1}, Lemma~\ref{lem:2} and Corollary~\ref{cor:conjugate}.
\end{proof}

\subsection{The proof of the refined conjecture}

It is also not difficult to generalize the proof above to show the finer version of the conjecture 
when $(G,\tilde G)=(SU(n),PU(n))$,  $Z=\bZ_n$, and $(H,\tilde H)=(PU(n),SU(n))$.
As a preparation, we first need to study $H^1(B\Gamma,Z)$ and $H^2(B\Gamma,Z)$
when $Z=\bZ_n$, the center of $SU(n)$.
Recall that we defined $A=H^1(B\Gamma,U(1))$, i.e. the set of homomorphisms $\Gamma\to U(1)$.
Then we can identify $H^1(B\Gamma,\bZ_n) = \Ker (n\times): A\to A$.

Next, we consider $H^2(B\Gamma,\bZ_n)$. Take a cocycle representative $\omega(g,h)\in \bZ_n$.
As $H^2(B\Gamma,U(1))=0$, we can take $\nu(g)\in U(1)$ such that \[
\omega(g,h)=\nu(g)\nu(h)/\nu(gh).
\] To such $\nu$'s are different by an element of $A$.
Note also that $\nu(g)^n$ is a homomorphism. Let us denote it by $a\in A$.
This is well-defined up to $nA$. 
In this manner, we have defined a map $H^2(B\Gamma,\bZ_n)\to A/nA$.
This is actually an isomorphism, as can be seen by studying the Bockstein long exact sequence.
\if0
as can be seen from the long exact sequence \[
H^1(B\Gamma,U(1)) \xrightarrow{ (-)^n } H^1(B\Gamma,U(1))
\xrightarrow{\beta} H^2(B\Gamma,\bZ_n)\xrightarrow{\iota}  H^2(B\Gamma,U(1))=0
\] associated to \[
0\to \bZ_n\xrightarrow{\iota} U(1)\xrightarrow{n\times } U(1)\to 0
\]
\fi

We consider a $(Z,\omega)$-twisted homomorphism from $\Gamma$ to $SU(n)$
as a projective representation $\rho$ of $\Gamma$ in $SU(n)$ such that \[
\rho(gh)=\omega(g,h) \rho(g) \rho(h). 
\]
Now define $\tilde \rho(g):= \nu(g) \rho(g)\in U(n)$. 
Then $\tilde \rho : \Gamma\to U(n)$ is a genuine representation.
Furthermore, \[
\det\tilde\rho(g)= \nu(g)^n =a(g),
\] meaning that $\det\tilde\rho$ is a one-dimensional representation of $\Gamma$.

Conversely, suppose we are given a genuine representation $\tilde \rho:\Gamma\to U(n)$ such that $\det\tilde\rho = a \in A$.
Let $\rho(g):=\nu(g)^{-1}\tilde\rho(g)$.
This defines a $(Z,w)$-twisted  homomorphism from $\Gamma$ to $SU(n)$.

In this manner we established the following proposition:
\begin{prop}
$\psi_{Z,w}(\Gamma,SU(n))$ can be identified with the subset $\det^{-1}(a)$ of $\psi(\Gamma,U(n))$,
where $a\in A$ is a representative of $w$ under the map $A\to A/nA \simeq H^2(B\Gamma,\bZ_n)$.
\end{prop}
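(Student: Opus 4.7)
The plan is to upgrade the pointwise constructions already sketched in the paragraphs immediately preceding the proposition into a pair of mutually inverse maps at the level of equivalence classes. Concretely, I would fix once and for all a cocycle $\omega\in Z^2(B\Gamma,\bZ_n)$ representing $w$, and a cochain $\nu:\Gamma\to U(1)$ with $\delta\nu=\omega$, which exists because $H^2(B\Gamma,U(1))=0$. By construction $\nu^n\in A$, and its class in $A/nA$ is exactly $a$ via the Bockstein isomorphism $H^2(B\Gamma,\bZ_n)\xrightarrow{\sim}A/nA$. The assignment $\rho\mapsto\tilde\rho:=\nu\cdot\rho$ sends a twisted homomorphism $\rho:\Gamma\to SU(n)$ satisfying $\rho(gh)=\omega(g,h)\rho(g)\rho(h)$ to a genuine $U(n)$-representation, because the scalar factor from $\omega$ is exactly cancelled by the coboundary from $\nu$. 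A direct computation gives $\det\tilde\rho=\nu^n$, so $\tilde\rho\in\det^{-1}(a)$; the inverse assignment is $\tilde\rho\mapsto\rho:=\nu^{-1}\tilde\rho$, which lands in $SU(n)$ precisely because $\det\tilde\rho$ represents the class $a$.

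The delicate step will be compatibility with the equivalence relations on both sides. If $\rho'=g\rho g^{-1}$ with $g\in SU(n)$, then visibly $\tilde\rho'=g\tilde\rho g^{-1}$, so the forward map descends. The converse requires rescaling: if $\tilde\rho'=g\tilde\rho g^{-1}$ for some $g\in U(n)$, I would pick a scalar $\zeta\in U(1)$ with $\zeta^n=\det(g)^{-1}$, which exists since $U(1)$ is divisible, and replace $g$ by $\zeta g\in SU(n)$, which effects the same conjugation on the image. This shows that $U(n)$-conjugacy of the $\tilde\rho$'s forces $SU(n)$-conjugacy of the $\rho$'s, so the bijection descends both ways.

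The main obstacle I anticipate is bookkeeping: checking that the resulting bijection is insensitive to all auxiliary choices. Changing $\nu$ by an element $\mu\in A$ multiplies $\tilde\rho$ by the one-dimensional character $\mu$ and shifts $\det\tilde\rho$ by $\mu^n\in nA$, so the freedom in $\nu$ is absorbed precisely into the indeterminacy of $a$ modulo $nA$. Similarly, replacing $\omega$ by a cohomologous cocycle changes $\nu$ by a coboundary and leaves the picture unchanged up to an inner adjustment that is invisible after passing to conjugation classes. Once these compatibilities are pinned down, the two constructions assemble into the claimed identification $\psi_{Z,w}(\Gamma,SU(n))\xrightarrow{\sim}\det^{-1}(a)\subset\psi(\Gamma,U(n))$.
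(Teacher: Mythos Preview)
Your proposal is correct and follows exactly the approach the paper takes: the paper's ``proof'' is simply the two paragraphs of discussion immediately preceding the proposition, using the same bijection $\rho\mapsto\nu\rho$ and its inverse. You are in fact more careful than the paper, which does not address why the bijection descends to conjugacy classes; your $\zeta$-rescaling argument to pass from $U(n)$-conjugacy to $SU(n)$-conjugacy fills a detail the paper leaves implicit.
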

The action of $H^1(B\Gamma,\bZ_n)$ on $\psi_{Z,w}(\Gamma,SU(n))$
is the action of $\Ker (n\times) : A\to A$ by the tensor product,

Using the fact that $(A/nA)^\wedge = \Ker (n\times): A^\wedge\to A^\wedge$, we obtain the 
following proposition:
\begin{prop}
There is a natural identification of two vector spaces,
\[
V(\Gamma,U(n))\quad \text{and}\quad \text{$|nA|$ copies of}\ V_Z(\Gamma,SU(n)),
\]
as  representations of $H^1(B\Gamma,\bZ_n)\times H^2(B\Gamma,\bZ_n)^\wedge$,
where \[
H^1(B\Gamma,\bZ_n)\simeq \Ker (n\times): A\to A
\] acts via the $A$ action on $V(\Gamma,U(n))$
and \[
H^2(B\Gamma,\bZ_n)^\wedge \simeq \Ker (n\times): A^\wedge\to A^\wedge
\]
 acts via the $A^\wedge$ action on $V(\Gamma,U(n))$.
\end{prop}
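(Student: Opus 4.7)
The plan is to combine the preceding proposition, which identifies $\psi_{Z,w}(\Gamma,SU(n))$ with the fiber $\det^{-1}(a)\subset\psi(\Gamma,U(n))$ for any lift $a\in A$ of $w\in A/nA\simeq H^2(B\Gamma,\bZ_n)$, with the obvious decomposition of $V(\Gamma,U(n))$ along $\det$, and then to verify that the two group actions match under the resulting identification.

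First I would write
\[
V(\Gamma,U(n))=\bigoplus_{a\in A}V(\det{}^{-1}(a)).
\]
Grouping the summands according to the image of $a$ in $A/nA$, each coset $a_0(w)+nA$ contains exactly $|nA|$ elements, and by the preceding proposition each $V(\det^{-1}(a))$ in this coset is isomorphic to $V_{Z,w}(\Gamma,SU(n))$. Summing over $w$ then identifies $V(\Gamma,U(n))$, as a vector space, with $|nA|$ copies of $V_Z(\Gamma,SU(n))=\bigoplus_w V_{Z,w}(\Gamma,SU(n))$.

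Next I would verify the compatibility of the $H^1$-action. Under $H^1(B\Gamma,\bZ_n)\simeq\Ker(n\times\colon A\to A)$, an element $z$ acts on a $(Z,w)$-twisted homomorphism $\rho$ by pointwise multiplication, and hence on the associated genuine representation $\tilde\rho=\nu\rho$ again by pointwise multiplication with $z$. Because $z^n=1$, the identity $\det(z\tilde\rho)=z^n\det\tilde\rho=\det\tilde\rho$ shows that the $A$-action preserves each summand $V(\det^{-1}(a))$ and that its restriction to $\Ker(n\times)\subset A$ matches the $H^1$-action on $V_{Z,w}$. For the $H^2(B\Gamma,\bZ_n)^\wedge$-action I would dualize the surjection $A\twoheadrightarrow A/nA$ to identify $H^2(B\Gamma,\bZ_n)^\wedge\simeq\Ker(n\times\colon A^\wedge\to A^\wedge)$. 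An element $\hat w$ in this kernel acts on $v([\tilde\rho])$ with $\tilde\rho\in\det^{-1}(a)$ by the scalar $(\hat w,a)$ of \eqref{eq:action}; since $(\hat w)^n=1$, this scalar depends only on $a$ modulo $nA$, equals $(\hat w,w)$, and therefore matches the prescribed action on $V_{Z,w}(\Gamma,SU(n))$.

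The only subtlety is that the identification $V(\det^{-1}(a))\simeq V_{Z,w}(\Gamma,SU(n))$ from the preceding proposition depends on auxiliary choices of the lift $\nu$ and of the representative $a\in A$ of $w$; but changing these choices only rescales basis vectors within each $V_{Z,w}$-factor and leaves the $F(\Gamma;\bZ_n)$-representation structure unchanged. Beyond this bookkeeping I do not anticipate any real obstacle: the proposition is essentially a repackaging of what has already been established.
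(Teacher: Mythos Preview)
Your proposal is correct and follows exactly the approach the paper intends: the paper does not write out a proof, stating only that the proposition follows from the preceding identification $\psi_{Z,w}(\Gamma,SU(n))\simeq\det^{-1}(a)$ together with the duality $(A/nA)^\wedge=\Ker\bigl(n\times\colon A^\wedge\to A^\wedge\bigr)$. Your decomposition of $V(\Gamma,U(n))$ along $\det$, the grouping into cosets of $nA$, and the verification that both the $H^1$- and $H^2$-actions match are precisely the steps needed to unpack that sentence.
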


\begin{thm}
Conjecture~\ref{conj:finer} holds when $(G,\tilde G)=(SU(n),PU(n))$,
$(H,\tilde H)=(PU(n),SU(n))$ and $Z=\bZ_n$.
\end{thm}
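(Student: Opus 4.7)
The plan is to adapt the argument of Theorem~\ref{thm:mainA} while keeping track of the full $F$-module structure, using the proposition just proved as the main technical input.

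First, I would establish a mirror version of that proposition. Running the identical argument with the roles of $\bZ_n$ and $\bZ_n^\wedge$ swapped throughout, so that $\tilde H=SU(n)$ is viewed as the total space of the central extension by $Z^\wedge\subset SU(n)$, one obtains a natural identification of $V(\Gamma,U(n))$ with $|nA^\wedge|$ copies of $V_{Z^\wedge}(\Gamma,SU(n))$ as a representation of $F(\Gamma;\bZ_n^\wedge)$, where $H^1(B\Gamma;\bZ_n^\wedge)\simeq \Ker((n\times)\colon A^\wedge\to A^\wedge)$ acts via the $A^\wedge$-action on $V(\Gamma,U(n))$ and $H^2(B\Gamma;\bZ_n^\wedge)^\wedge\simeq (A^\wedge/nA^\wedge)^\wedge\simeq \Ker((n\times)\colon A\to A)$ acts via the $A$-action. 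The algebraic input is the Bockstein-based identification $H^2(B\Gamma;\bZ_n^\wedge)\simeq A^\wedge/nA^\wedge$ together with Pontryagin duality.

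Next, I would compare the two resulting $F$-module structures on the common vector space $V(\Gamma,U(n))$ via the swap isomorphism $s$ of Definition~\ref{defn:swap}. On the $F(\Gamma;\bZ_n)$ side the first factor acts through $A$ and the second through $A^\wedge$; on the $F(\Gamma;\bZ_n^\wedge)$ side these roles are interchanged. Since $s$ sends the first factor of one group to the second factor of the other via $\iota_Z$ and $\iota_{Z^\wedge}$, the assignment of $A$- versus $A^\wedge$-actions lines up, so pulling back the $F(\Gamma;\bZ_n^\wedge)$-action on $V(\Gamma,U(n))$ along $s$ recovers the original $F(\Gamma;\bZ_n)$-action. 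Because $A$ and $A^\wedge$ are Pontryagin dual finite abelian groups and hence abstractly isomorphic, $|nA|=|nA^\wedge|$, and the two identifications then present the same $F(\Gamma;\bZ_n)$-module as $|nA|$ copies of $V_Z(\Gamma,SU(n))$ on one hand and of $V_{Z^\wedge}(\Gamma,SU(n))$ on the other. Since characters determine representations of finite groups over $\bC$, dividing by the common multiplicity $|nA|$ yields the isomorphism $V_Z(\Gamma,SU(n))\simeq V_{Z^\wedge}(\Gamma,SU(n))$ of $F$-representations asserted by Conjecture~\ref{conj:finer}.

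I expect the main obstacle to be the second step: one must verify that the swap isomorphism, defined topologically via Poincar\'e duality on the lens space $S^3/\Gamma$, is compatible with the purely algebraic identifications of $H^1$ and $H^2$ with subgroups of $A$ and $A^\wedge$ extracted from the Bockstein sequence. A priori the identifications of $\Ker((n\times)\colon A\to A)$ with $H^1(B\Gamma;\bZ_n)$ on the one hand and with $H^2(B\Gamma;\bZ_n^\wedge)^\wedge$ on the other may differ by the inversion $a\mapsto a^{-1}$, but this ambiguity is precisely the one already noted in the remark following Corollary~\ref{cor:conjugate} and is harmless for the statement of the conjecture.
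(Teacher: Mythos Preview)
Your argument has a genuine gap in the first step, which then propagates to make the second step fail.

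When you ``run the identical argument with the roles of $\bZ_n$ and $\bZ_n^\wedge$ swapped,'' you do \emph{not} obtain the statement you write down. The set $A=H^1(B\Gamma;U(1))$ of one-dimensional characters of $\Gamma$ is intrinsic to $\Gamma$ and does not depend on $Z$; replacing $Z$ by $Z^\wedge$ does not interchange $A$ and $A^\wedge$. Concretely, once $Z^\wedge$ is identified with the center $\mu_n\subset SU(n)$ (as required by the Langlands-dual setup $0\to Z^\wedge\to\tilde H\to\tilde G\to 0$), one has $H^1(B\Gamma;Z^\wedge)=\mathrm{Hom}(\Gamma,\mu_n)=\Ker((n\times)\colon A\to A)$, \emph{not} the $n$-torsion of $A^\wedge$. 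Moreover, the action of $z'\in H^1(B\Gamma;Z^\wedge)$ on a twisted homomorphism is $f\mapsto z'f$, which after lifting to $U(n)$ is tensoring by the one-dimensional character $z'$---i.e.\ the $A$-action (a permutation of basis vectors), not the $A^\wedge$-action (scalar multiplication via $\det$). Likewise $H^2(B\Gamma;Z^\wedge)^\wedge$ still acts through the $A^\wedge$-action. So in \emph{both} the original and the mirrored identification, the first factor of $F$ acts by permutations through $A$ and the second factor acts by scalars through $A^\wedge$.

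Consequently, the swap isomorphism $s$, which exchanges the two factors of $F$, does \emph{not} make the identity map on $V(\Gamma,U(n))$ equivariant: after pulling back along $s$ you are asking for the permutation-type $A$-action to agree with the scalar-type $A^\wedge$-action on the same space, which is false in general. What is actually needed is a linear automorphism of $V(\Gamma,U(n))$ that conjugates the $A$-action into the $A^\wedge$-action. This is precisely the content of Corollary~\ref{cor:conjugate}, which the paper invokes and which is supplied by the modular $S$-matrix of $\hat\fg_n$ via the McKay map. Your attempt to bypass that corollary therefore cannot succeed; the obstacle is not merely a sign ambiguity in the identifications but the absence of any intertwiner between two genuinely different actions.
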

\begin{proof}
Immediate from the proposition above and Corollary~\ref{cor:conjugate}.
\end{proof}

\section{The case $(G,\tilde G)=(Sp(n),SO(2n+1))$, arbitrary $\Gamma$}
\label{sec:BC}
In this section we prove:
\begin{thm}
\label{thm:mainA}
 Conjecture~\ref{conj:rough} holds for arbitrary $\Gamma$
when  $(G,\tilde G)=(Sp(n),SO(2n+1))$.
Namely, we have $N(\Gamma,Sp(n))=N(\Gamma,SO(2n+1))$ for arbitrary finite subgroup $\Gamma$ of $SU(2)$.
\end{thm}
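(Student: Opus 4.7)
The plan is to compute both counts $N(\Gamma,Sp(n))$ and $N(\Gamma,SO(2n+1))$ as coefficients in generating functions in a formal variable $q$ tracking complex dimension. An element of $\psi(\Gamma,Sp(n))$ is the same as an isomorphism class of $2n$-dimensional complex symplectic representation of $\Gamma$, while an element of $\psi(\Gamma,SO(2n+1))$ is the same as an isomorphism class of $(2n{+}1)$-dimensional complex orthogonal representation of $\Gamma$ with trivial determinant; here $SO(2n+1)$-conjugacy coincides with $O(2n+1)$-conjugacy because $-I$ is central in $O(2n+1)=SO(2n+1)\sqcup(-I)\cdot SO(2n+1)$.

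I would first partition the irreducibles of $\Gamma$ by their Frobenius--Schur indicator into real (R), quaternionic (H), and complex conjugate pairs (C). Uniqueness of the invariant bilinear form on each isotypic component then shows that a symplectic $\Gamma$-representation is determined up to isomorphism by the multiplicities of the irreducibles, subject to: arbitrary multiplicities for H irreps, even multiplicities for R irreps, and matched multiplicities within each C pair; the orthogonal case is the same with $R$ and $H$ exchanged. Writing $d_\rho=\dim_{\bC}\rho$, this gives
\[
F_{Sp}(q) := \sum_{n\ge 0} N(\Gamma,Sp(n))\, q^{2n}
= \prod_{\rho\in R}\frac{1}{1-q^{2 d_\rho}}\,\prod_{\rho\in H}\frac{1}{1-q^{d_\rho}}\,\prod_{\{\rho,\rho^*\}\subset C}\frac{1}{1-q^{2 d_\rho}},
\]
and an analogous unconstrained-orthogonal generating function with $R$ and $H$ swapped. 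The odd-dimension condition for $SO(2n+1)$ is then imposed by the parity projector $\tfrac12(f(q)-f(-q))$, and the trivial-determinant condition by Fourier inversion over $\text{Hom}(\Gamma,\bZ_2)$, using that $\det V$ depends only on the multiplicities of those real irreps $\rho$ whose character $\chi_\rho:=\det\rho$ is nontrivial (quaternionic irreps and complex conjugate pairs automatically contribute trivially to the determinant). The theorem thereby reduces to an explicit algebraic identity of the form
\[
q\,F_{Sp}(q) \;=\; F_{SO}(q)
\]
between rational functions of $q$ whose ingredients are the triples $(d_\rho,\,\text{FS}(\rho),\,\chi_\rho)$ attached to the irreducibles of $\Gamma$.

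The final step is to verify this identity case by case along the McKay classification of finite subgroups $\Gamma\subset SU(2)$, reading off the $d_\rho$'s from the comarks of the affine Dynkin diagram and obtaining the Frobenius--Schur types and $\chi_\rho$'s from direct computation. I expect the main obstacle to be the absence of a uniform derivation: the order of $\text{Hom}(\Gamma,\bZ_2)$, the partition of irreps into R, H, and C, and the distribution of the nontrivial $\chi_\rho$ all vary across the types $\bZ_n$, $\hat\cD_n$, $\hat\cT$, $\hat\cO$, $\hat\cI$, with no obvious common pattern. The binary dihedral family $\hat\cD_n$, which must be further split by the parity of $n$ because $\Gamma_{ab}$ is $\bZ_2\times\bZ_2$ or $\bZ_4$ accordingly, is likely the most delicate, while the three exceptional groups each yield a finite explicit computation.
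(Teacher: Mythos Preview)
Your proposal is correct and follows essentially the same approach as the paper: set up generating functions via the Frobenius--Schur classification, impose the odd-dimension and trivial-determinant constraints on the orthogonal side by parity and Fourier projections, and then verify the resulting rational-function identity $q\,F_{Sp}(q)=F_{SO}(q)$ case by case along the McKay list. The paper's only refinement over your outline is organizational: rather than verifying six separate identities, it abstracts them into three parametrized families (Propositions~\ref{prop:K-formula}, \ref{prop:K-formula-2}, \ref{prop:K-formula-3}) according to whether $\lvert\mathrm{Hom}(\Gamma,\bZ_2)\rvert$ equals $1$, $2$, or $4$, and then plugs in the specific $(d_\rho,\chi_\rho)$ data for each $\Gamma$ --- exactly the split you anticipated, with $\hat\cD_{\text{even}}$ indeed requiring the most work.
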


We need some preparations.

\subsection{Generalities}

Given a finite group $\Gamma$, let us denote by $\rho_i$ its irreducible representations over $\bC$.
\begin{itemize}
\item When $\overline{\rho_i} \not\simeq \rho_i$, we call $\rho_i$ a complex representation.
We denote $\overline{\rho_i}$ by $\rho_{\bar i}$, and call $(\rho_i,\rho_{\bar i})$ a complex pair.
\item When $\overline{\rho_i} \simeq \rho_i$,
we call $\rho_i$ a real representation (in a wider sense). 
Furthermore, \begin{itemize}
\item if  $\rho_i$ is a complexification of a representation over $\bR$,
we call $\rho_i$ a strictly real representation.
\item if not, we call $\rho_i$ a pseudoreal representation. In this case, $\rho_i$ is obtained by 
taking a quaternionic action of $\Gamma$ on a quaternionic vector space $\bH^n$
and regarding it as an action on $\bC^{2n}$.
\end{itemize}
\end{itemize}

Consider a homomorphism $\rho: \Gamma \to Sp(n)$.
We have an action of $\Gamma$ on $\bH^n$. Regarding $\bH=\bC^2$,
we have an action of $\Gamma$ on $\bC^{2n}$, and then we can decompose it into irreducibles.
Let us say $\rho$ contains $n_i$ copies of $\rho_i$, so that $2n=\sum_i n_i\dim \rho_i$.
This lifts to an action on $\bH^{n}$ if and only if \begin{itemize}
\item $n_i$ is even for all strictly real $\rho_i$, and
\item $n_i=n_{\bar i}$ for all complex conjugate pairs $\rho_i$ and $\rho_{\bar i}$.
\end{itemize}

Similarly, consider  a homomorphism $\rho: \Gamma \to SO(2n+1)$.
This gives an action  of $\Gamma$ on $\bR^{2n+1}$. After complexification, 
we have an action of $\Gamma$ on $\bC^{2n+1}$, and then we can decompose it into irreducibles.
Let us say $\rho$ contains $n_i$ copies of $\rho_i$, so that $2n+1=\sum_i n_i\dim \rho_i$.
This lifts to an action on $\bR^{2n+1}$ if and only if \begin{itemize}
\item $n_i$ is even for all pseudoreal $\rho_i$, and
\item $n_i=n_{\bar i}$ for all complex conjugate pairs $\rho_i$ and $\rho_{\bar i}$.
\end{itemize}
This only guarantees that it is a homomorphism $\rho:\Gamma\to O(2n+1)$.
To guarantee that it is a homomorphism into $SO(2n+1)$, we need to require that $\det\rho$ is a trivial representation.

\begin{rem}
Note that the roles of 
strictly real irreducible representations ($\mathbb{R}$)
and  pseudoreal irreducible  representations ($\mathbb{H}$) are swapped 
when constructing general real representations ($\mathbb{R}$) 
and general  pseudoreal representations ($\mathbb{H}$).
This observations will have many repercussions in the generating functions we see below.
\end{rem}

\subsection{More properties of irreducible representations of $\Gamma$}

We now need to know the reality properties of complex irreducible representations of $\Gamma$.
The trivial representation is strictly real,
while the defining 2-dimensional representation $V$ coming from $\Gamma\subset SU(2)$ is clearly pseudoreal,
since $SU(2)\simeq Sp(1)$ naturally acts on $\bH$.
The irreducible decomposition after tensoring by $V$ then allows us to determine
the reality properties of many of the irreducible representations in a straightforward manner.

\subsubsection{$\Gamma=\bZ_n$.}
For $\bZ_{2n+1}$, there are $2n+1$ irreducible representations 
\[
\rho_k(a)=e^{2\pi i k/(2n+1)}, \qquad k=-n,-n+1,\ldots, +n.
\]
$\rho_0$ is a strictly real representation, and $\rho_{\pm k}$ for $k\neq 0$ are complex pairs of representations.

For $\bZ_{2n}$,
there are $2n$ irreducible representations $\rho_k$ for $k=-n+1,\ldots, n-1,n$, given by \[
\rho_k(a)=e^{2\pi i k/(2n)}.
\]
$\rho_0$ and $\rho_{n}$ are strictly real.
The rest are complex pairs, such that $\overline{\rho_k}=\rho_{-k}$.

\subsubsection{$\hat \cD_m$}

Let us first describe the structure of $\hat \cD_m$ common to both even $m$ and odd $m$.
There are $m-1$ two-dimensional representations $\rho_k$ for $k=1,\ldots, m-1$, given by \[
\rho_{2,k}(a)=\begin{pmatrix}0&i^k\\i^k&0\end{pmatrix},\qquad
\rho_{2,k}(b)=\begin{pmatrix}\alpha^k&0\\0&\alpha^{-k}\end{pmatrix}
\]
where $\alpha=e^{\pi i/m}$.
We can similarly define $\rho_{2,0}$ and $\rho_{2,m}$, but they decompose into two one-dimensional representations:
\[
	\rho_{2,0}=\rho_1\oplus \rho_{1'},\quad
	\rho_{2,m}=\rho_{1''}\oplus \rho_{1'''}.
\]
Explicitly, we have \[
	\begin{array}{c|cccc}
		& 1 & 1' & 1'' & 1'''\\
		\hline
		a & 1 & -1 & i^m & -i^m\\
		b & 1 & 1 & -1 & -1
	\end{array}.
\]
These are the irreducible representations of $\hat\cD_m$.
They form the extended Dynkin diagram of type $D_{n+2}$: \[
\begin{array}{c@{}c@{}c@{}c@{}c}
&1'& &1''& \\
&|& &|& \\
1-&2_1&-2_2-2_3-\cdots-&2_{m-1}&-1'''
\end{array}.
\]

Let us now specialize to $\hat\cD_{2n+1}$.
In this case, \begin{itemize}
\item $\rho_1$ and $\rho_{1'}$ are strictly real. 
\item $\rho_{1''}$ and $\rho_{1'''}$ form a complex pair.
\item $\rho_{2,k}$ for odd $k$ is pseudoreal, 
while $\rho_{2,k}$ for even $k$ is strictly real.
From explicit computations, $\det\rho_{2,k}$ for odd $k$ is $1$ and for even $k$ is $1'$.
\end{itemize}

Let us next discuss  $\hat\cD_{2n}$.
In this case, \begin{itemize}
\item $\rho_1$, $\rho_{1'}$, $\rho_{1''}$ and $\rho_{1'''}$ are all strictly real. 
\item $\rho_{2,k}$ for odd $k$ is pseudoreal, 
while $\rho_{2,k}$ for even $k$ is strictly real.
\item From explicit computations, $\det\rho_{2,k}$ for odd $k$ is $1$ and for even $k$ is $1'$.
\end{itemize}

\subsubsection{$\Gamma=\hat\cT $}
For $\Gamma=\hat\cT $ of type $E_6$,
the irreducible representations can be displayed according to the McKay correspondence as
\[
\begin{array}{c@{}c@{}c}
&1''&\\
&|&\\
&2''&\\
&|&\\
1-2-& 3 & -2'-1'
\end{array}
\]
where we used the dimension and additional primes if necessary to label them.
$1$ is the trivial representation and $2$ is the defining representation 
coming from the embedding $\Gamma\subset SU(2)$.
$1$ and $3$ are strictly real representations,
$2$ is a pseudoreal representation,
while  ($1'$, $1''$) and $(2',2'')$ are complex pairs of irreducible representations.
$\det$ maps $2$, $3$ to $1$, $2'$ to $1''$ and $2''$ to $1'$.

\subsubsection{$\Gamma=\hat\cO $}
For $\Gamma=\hat\cO $ of type $E_7$,
the irreducible representations can be displayed according to the McKay correspondence as
\[
\begin{array}{r@{}c@{}l}
	&2''&\\
	&|&\\
	1-2-3-&4&-3'-2'-1'
\end{array}.
\]
The $1$ is the trivial representation,
the $2$ is the 2-dimensional representation coming from the defining inclusion $\Gamma_{E_7}\subset SU(2)$.
The $3$ is the 3-dimensional representation obtained 
by applying the projection $SU(2) \to SO(3)$ to $2$.
The $1'$ is the sign representation $\rho_{1'}(a)=-1$ and $\rho_{1'}(b)=1$.
Then $2'=2\otimes 1'$ and $3'=2\otimes 1'$.
Finally, the $2''$ is given by \begin{equation}
\rho_{2''}(a)=\begin{pmatrix}1&0\\0&-1\end{pmatrix},\quad 
\rho_{2''}(b)=\text{$120^\circ$ rotation}.
\label{explicit-rep}
\end{equation}
From these descriptions, we know that $1$, $1'$, $3$, $3'$ and $2''$ are 
strictly real representations, while 
$2$, $2'$ and $4$ are pseudoreal representations.
$\det$ of pseudoreal representations are all $1$.

Among strictly real representations, $\det$ maps $1$ and $3$ to $1$,
and the rest to $1'$.
Checking this needs some work. 
We use the explicit presentation of $\hat\cO $.
As $b^3=a^4$, $\det(\rho(b))^3=\det(\rho(a))^4$.
Noting $\det\rho(a)=\pm1$, this forces $\det(\rho(b)^3)=1$, so $\det(\rho(b))=1$.
As for $\det\rho(a)$, we know explicitly that 
$\det\rho(a)=1$ for $1$ (as it is a trivial representation)
and for $3$ (as it is the projection from the defining representation in $SU(2)$ to $SO(3)$). 
We know $\rho(a)=-1$ for $1'$.
Therefore $\det\rho(a)=-1$ for $3'$.
For $2''$, we use the explicit representative given above 
to find $\det\rho(a)=-1$.

\subsubsection{$\Gamma=\hat\cI $}
Finally, for $\Gamma=\hat\cI $ of type $E_8$,
the irreducible representations can be displayed as 
\[
\begin{array}{c@{}c@{}c}
&3'&\\
&|&\\
1-2-3-4-5-&6&-4'-2'
\end{array}.
\]
$1$, $3$, $5$, $4'$, and $3'$ are strictly real representations,
and $2$, $4$, $6$, $2'$ are pseudoreal  representations.

\subsection{The proof of the basic conjecture}

From these data and the discussions above, we have the following result for the generating functions:
\begin{prop}
\label{prop:genX}
We have the following generating functions
for $N(\Gamma,G)$, for $G=Sp(n)$ and $G=SO(2n+1)$:
\begin{itemize}
\item For $\Gamma=\bZ_m$,
 \[
\begin{aligned}
\sum_{n=0}^\infty q^{2n} N(\mathbb{Z}_{m},Sp(n)) &= \frac1{(1-q^2)^{\lfloor\frac{m}{2}\rfloor+1}}, \\
\sum_{n=0}^\infty q^{2n+1} N(\mathbb{Z}_{m},SO(2n+1)) &= \frac1{2}\sum_{a=0}^1 (-1)^a\frac1{1-(-1)^{a}q}\frac1{(1-(-1)^{2a}q^2)^{\lfloor\frac{m}{2}\rfloor}}.
\end{aligned}
\]
\item For $\Gamma=\hat D_{2k}$,
\[
\begin{aligned}
\sum_{n=0}^\infty q^{2n} N(\hat D_{2k},Sp(n)) &= \frac1{(1-q^2)^{k+4}}\frac1{(1-q^4)^{k-1}},  \\
\sum_{n=0}^\infty q^{2n+1} N(\hat D_{2k},SO(2n+1)) &= \frac1{2}\sum_{a=0}^1\frac1{2^2}\sum_{b_0,b_1=0}^1 (-1)^a\frac1{1-(-1)^{a}q}\frac1{1-(-1)^{a+b_0}q}\frac1{1-(-1)^{a+b_1}q} \\
&\hspace{35pt} \times\frac1{1-(-1)^{a+b_0+b_1}q}\frac1{(1-(-1)^{2a+b_0}q^2)^{k-1}}\frac1{(1-(-1)^{4a}q^4)^k}.
\end{aligned}
\]
\item For $\Gamma=\hat D_{2k+1}$,
\[
\begin{aligned}
\sum_{n=0}^\infty q^{2n} N(\hat D_{2k+1},Sp(n)) &= \frac1{(1-q^2)^{k+3}}\frac1{(1-q^4)^k}, \\
\sum_{n=0}^\infty q^{2n+1} N(\hat D_{2k+1},SO(2n+1)) &= \frac1{2}\sum_{a=0}^1\frac1{2}\sum_{b=0}^1(-1)^a\frac1{1-(-1)^{a}q}\frac1{1-(-1)^{a+b}q}\frac1{1-(-1)^{2a}q^2} \\
&\hspace{120pt} \times\frac1{(1-(-1)^{2a+b}q^2)^{k}}\frac1{(1-(-1)^{4a}q^4)^k}.
\end{aligned}
\]
\item For $\Gamma=\hat\cT $,
\[
\begin{aligned}
\sum_{n=0}^\infty q^{2n} N(\hat\cT ,Sp(n)) &= \frac1{(1-q^2)^3}\frac1{1-q^4}\frac1{1-q^6}, \\
\sum_{n=0}^\infty q^{2n+1} N(\hat\cT ,SO(2n+1)) &= \frac1{2}\sum_{a=0}^1 (-1)^a\frac1{1-(-1)^{a}q}\frac1{1-(-1)^{2a}q^2}\frac1{1-(-1)^{3a}q^3} \\
&\hspace{210pt}\times\frac1{(1-(-1)^{4a}q^4)^2}, \\
\end{aligned}
\]
\item For $\Gamma=\hat\cO $,
\[
\begin{aligned}
\sum_{n=0}^\infty q^{2n} N(\hat\cO ,Sp(n)) &= \frac1{(1-q^2)^4}\frac1{(1-q^4)^2}\frac1{(1-q^6)^2}, \\
\sum_{n=0}^\infty q^{2n+1} N(\hat\cO ,SO(2n+1)) &= \frac1{2}\sum_{a=0}^1\frac1{2}\sum_{b=0}^1(-1)^a\frac1{1-(-1)^{a}q}\frac1{1-(-1)^{a+b}q}\frac1{1-(-1)^{2a+b}q^2} \\
&\hspace{5pt} \times\frac1{1-(-1)^{3a}q^3}\frac1{1-(-1)^{3a+b}q^3}\frac1{(1-(-1)^{4a}q^4)^2}\frac1{1-(-1)^{8a}q^8}, \\
\end{aligned}
\]
\item For $\Gamma=\hat\cI $,
\[
\begin{aligned}
\sum_{n=0}^\infty q^{2n} N(\hat\cI ,Sp(n)) &= \frac1{(1-q^2)^3}\frac1{1-q^4}\frac1{(1-q^6)^3}\frac1{1-q^8}\frac1{1-q^{10}}, \\
\sum_{n=0}^\infty q^{2n+1} N(\hat\cI ,SO(2n+1)) &= \frac1{2}\sum_{a=0}^1 (-1)^a\frac1{1-(-1)^{a}q}\frac1{(1-(-1)^{3a}q^3)^2}\frac1{(1-(-1)^{4a}q^4)^3} \\
&\hspace{80pt} \times\frac1{1-(-1)^{5a}q^5}\frac1{1-(-1)^{8a}q^8}\frac1{1-(-1)^{12a}q^{12}}.
\end{aligned}
\]\end{itemize}
\end{prop}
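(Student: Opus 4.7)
The plan is to enumerate conjugacy classes of homomorphisms by their decomposition into complex irreducibles of $\Gamma$ and assemble the counts into generating functions over the multiplicities. A homomorphism $\rho:\Gamma\to Sp(n)$ (resp.\ $\Gamma\to SO(2n+1)$), up to conjugation, is determined by nonnegative multiplicities $\{n_i\}$ of the complex irreducibles $\rho_i$ of $\Gamma$ with $\sum_i n_i\dim\rho_i=2n$ (resp.\ $2n+1$), subject to the reality conditions recalled in Sec.~\ref{sec:BC}, and in the $SO$ case the further condition $\prod_i(\det\rho_i)^{n_i}=1$. All the required inputs --- dimensions, reality types (strictly real, pseudoreal, or complex pair), and determinants of the strictly real irreducibles --- are tabulated case by case in the preceding subsections.

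For the $Sp(n)$ side the constraints are that $n_i$ is even for strictly real $\rho_i$ and that $n_i=n_{\bar i}$ for every complex pair. The generating function factors as a product with factor $1/(1-q^{2\dim\rho_i})$ for each strictly real irreducible and each complex pair (in the latter case reparametrized by the common multiplicity), and $1/(1-q^{\dim\rho_i})$ for each pseudoreal irreducible. Substituting the tabulated data for each of the six $\Gamma$'s immediately produces the six $Sp(n)$ formulas.

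For the $SO(2n+1)$ side the reality constraints swap: $n_i$ must be even for pseudoreal $\rho_i$. Complex-pair and even-multiplicity pseudoreal contributions to the determinant are automatically trivial (pseudoreal determinants are $\pm 1$ with even exponent, and conjugate-pair determinants cancel), so the determinant condition reduces to $\sum_i n_i[\det\rho_i]=0$ in the $\mathbb{F}_2$-subspace $D\subseteq A[2]$ spanned by the strictly real determinants. Starting from the unconstrained product, I would apply the odd-parity projector $\tfrac12\sum_{a=0,1}(-1)^a$ (substituting $q\mapsto(-1)^aq$ factor by factor, which produces the signs $(-1)^{a\dim\rho_i}$ visible in the stated formulas), and then for each of the $\dim_{\mathbb{F}_2}D$ generators of $D$ an independent projector $\tfrac12\sum_{b=0,1}$ inserting a sign $(-1)^b$ into exactly those factors whose $\rho_i$ pairs nontrivially with that generator. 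The number of $b$-variables needed is $0$ for $\bZ_{2k+1}$, $\hat\cT$, $\hat\cI$; $1$ for $\bZ_{2k}$, $\hat\cD_{2k+1}$, $\hat\cO$; and $2$ for $\hat\cD_{2k}$. In the $\bZ_{2k}$ case the single $b$-sum can be absorbed via the identity $\tfrac12\sum_{b=0,1}1/(1-(-1)^{a+b}q)=1/(1-q^2)$, which is why the stated $\bZ_m$ formula carries no visible $b$; in the remaining cases the $b$-sums remain explicit, matching the asserted expressions.

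The main obstacle is not conceptual but is a careful bookkeeping task: one must correctly identify, for each $\Gamma$, the reality type and determinant class of each irreducible, so that the $b$-variables couple to the correct factors. The character-table computations in the preceding subsections (most notably the $E_7$ calculation using the presentation $a^4=b^3=(ab)^2$) are arranged precisely to supply this dictionary, so once it is in place the verification of each of the twelve formulas reduces to a direct assembly of the product.
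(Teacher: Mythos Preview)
Your proposal is correct and follows essentially the same approach as the paper: enumerate homomorphisms by the multiplicities $\{n_i\}$ of complex irreducibles subject to the reality constraints from Sec.~\ref{sec:BC}, turn the sum over multiplicities into a product of geometric series, and in the $SO$ case impose the odd-dimension and trivial-determinant conditions via averaging projectors $\tfrac12\sum_a(-1)^a$ and $\tfrac{1}{2^r}\sum_{b}$ (with $r=\dim_{\mathbb{F}_2}D$). The paper carries this out explicitly only for $\Gamma=\mathbb{Z}_{2k+1}$ and $\Gamma=\hat\cD_{2k}$ and declares the rest analogous, exactly as you describe; your observation that the $\mathbb{Z}_{2k}$ $b$-sum collapses via $\tfrac12\sum_b 1/(1-(-1)^{a+b}q)=1/(1-q^2)$ is the one small simplification needed to match the uniform $\mathbb{Z}_m$ formula.
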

\begin{proof}
We prove only the simplest case, $\Gamma = \mathbb{Z}_\text{odd}$, and the most complicated case, $\Gamma = \hat{\cD}_\text{even}$. The other cases can be proved analogously.
\begin{itemize}
\item When $(\Gamma , G)=(\bZ_{2k+1},Sp(n))$,\[
\begin{aligned}
\sum_{n=0}^\infty q^{2n} N(\mathbb{Z}_{2k+1},Sp(n)) &= \sum_{n=0}^\infty\sum_{\substack{0\le l_0,\ldots, l_k \\ 2(l_0+\cdots +l_k)=2n}} q^{2n}\\
& = \sum_{0\le l_0,\ldots, l_k} q^{2(l_0+\cdots +l_k)}
 = \frac1{(1-q^2)^{k+1}}.
 \end{aligned}
 \]
 \item When $(\Gamma , G)=(\bZ_{2k+1},SO(2n+1))$,\[
 \begin{aligned}
\sum_{n=0}^\infty q^{2n+1} N(\mathbb{Z}_{2k+1},SO(2n+1)) &= \frac1{2}\sum_{a=0}^1 (-1)^a\sum_{n=0}^\infty (-1)^{na} q^{n} N(\mathbb{Z}_{2k+1},SO(n))\\
&= \frac1{2}\sum_{a=0}^1 (-1)^a\sum_{n=0}^\infty \sum_{\substack{0\le l_0,\ldots, l_k \\ l_0+2(l_1+\cdots +l_k)=n}} (-1)^{na}q^{n}\\
&= \frac1{2}\sum_{a=0}^1 (-1)^a\frac1{1-(-1)^{a}q}\frac1{(1-(-1)^{2a}q^2)^{k}}.
\end{aligned}
\]
\item When $(\Gamma , G)=(\hat{\cD}_{2k},Sp(n))$,\[
\begin{aligned}
\sum_{n=0}^\infty q^{2n} N(\hat D_{2k},Sp(n)) &= \sum_{n=0}^\infty\sum_{\substack{0\le l_1,\ldots, l_4,m_1,\ldots,m_{2k-1} \\ 2(l_1+\cdots +l_4)+2(m_1+\cdots +m_{2k-1})+4(m_2+\cdots +m_{2k-2})=2n}} q^{2n}\\
&= \frac1{(1-q^2)^{4}}\frac1{(1-q^2)^{k}}\frac1{(1-q^4)^{k-1}}.
\end{aligned}
\]
\item When $(\Gamma , G)=(\hat{\cD}_{2k},SO(2n+1))$,\[
\begin{aligned}
&\hspace{10pt} \sum_{n=0}^\infty q^{2n+1} N(\hat D_{2k},SO(2n+1))\\
&= \frac1{2}\sum_{a=0}^1 (-1)^a\sum_{n=0}^\infty (-1)^{na} q^{n} N(\hat D_{2k},SO(n))\\
&= \frac1{2}\sum_{a=0}^1 (-1)^a\sum_{n=0}^\infty \sum_{\substack{0\le l_1,\ldots, l_4,m_1,\ldots,m_{2k-1} \\ l_1+\cdots +l_4+4(m_1+\cdots +m_{2k-1})+2(m_2+\cdots +m_{2k-2})=n \\ (l_2+l_4+m_2+\cdots +m_{2k-2})\,\text{even}, (l_3+l_4)\,\text{even}}} (-1)^{na}q^{n}\\
&= \frac1{2}\sum_{a=0}^1 (-1)^a\sum_{n=0}^\infty \hspace{60pt}\smashoperator{\sum_{\substack{0\le l_1,\ldots, l_4,m_1,\ldots,m_{2k-1} \\ l_1+\cdots +l_4+4(m_1+\cdots +m_{2k-1})+2(m_2+\cdots +m_{2k-2})=n}}}\hspace{30pt} \frac1{2^2}\sum_{b_0,b_1=0}^1 (-1)^{b_0(l_2+l_4+m_2+\cdots +m_{2k-2})+b_1(l_3+l_4)} (-1)^{na}q^{n}\\
&= \frac1{2}\sum_{a=0}^1\frac1{2^2}\sum_{b_0,b_1=0}^1 (-1)^a\frac1{1-(-1)^{a}q}\frac1{1-(-1)^{a+b_0}q}\frac1{1-(-1)^{a+b_1}q} \\
&\hspace{150pt} \times\frac1{1-(-1)^{a+b_0+b_1}q}\frac1{(1-(-1)^{4a}q^4)^k}\frac1{(1-(-1)^{2a+b_0}q^2)^{k-1}}.
\end{aligned}
\]
\end{itemize}
\end{proof}

We now prove the following three propositions, which are a bit more general than the original statements we had to check. The first one corresponds to the case $\Gamma = \mathbb{Z}_n, \hat{\cT}, \hat{\cI}$. And the second one corresponds to the case $\Gamma = \hat{\cD}_\text{odd}, \hat{\cO}$. And the last one corresponds to the case  $\Gamma = \hat{\cD}_\text{even}$.
\begin{prop}
\label{prop:K-formula}
Let $l\in\mathbb{Z}_{\ge 0}$, $k_1,\ldots, k_4, v_1,\ldots, v_{l} \in\mathbb{N}$. Set
\begin{equation*}
\left\{
\begin{aligned}
F_1(q) &= \prod_{r=1}^1\frac1{1-q^{4k_r-2}}\prod_{i=1}^{l}\frac1{1-q^{2v_i}},\\
\tilde F_1(q) &= \prod_{r=1}^1\frac1{1-q^{2k_r-1}}\prod_{i=1}^{l}\frac1{1-q^{2v_i}}.
\end{aligned}
\right.
\end{equation*}
For $k_1\neq k_2$, set
\begin{equation*}
\left\{
\begin{aligned}
F_2(q) &= \frac1{1-q^{2k_2-2k_1}}\prod_{r=1}^{2}\frac1{1-q^{4k_r-2}}\prod_{i=1}^{l}\frac1{1-q^{2v_i}},\\
\tilde F_2(q) &= \frac1{1-q^{4k_2-4k_1}}\prod_{r=1}^{2}\frac1{1-q^{2k_r-1}}\prod_{i=1}^{l}\frac1{1-q^{2v_i}}.
\end{aligned}
\right.
\end{equation*}
For $k_1+k_2 = k_3 + k_4, \{k_1, k_2\} \neq \{k_3,k_4\}$, set
\begin{equation*}
\left\{
\begin{aligned}
F_4(q) &= \frac1{1-q^{2k_1+2k_2-2}}\frac1{1-q^{2k_3-2k_1}}\frac1{1-q^{2k_4-2k_1}}\prod_{r=1}^{4}\frac1{1-q^{4k_r-2}}\prod_{i=1}^{l}\frac1{1-q^{2v_i}},\\
\tilde F_4(q) &= \frac1{1-q^{4k_1+4k_2-4}}\frac1{1-q^{4k_3-4k_1}}\frac1{1-q^{4k_4-4k_1}}\prod_{r=1}^{4}\frac1{1-q^{2k_r-1}}\prod_{i=1}^{l}\frac1{1-q^{2v_i}}.
\end{aligned}
\right.
\end{equation*}
Then we have \[
\begin{aligned}
q^{2k_1-1}\,F_s(q) = \frac1{2}\sum_{a=0}^1 (-1)^a \tilde F_s((-1)^{a}q)
\end{aligned}
\]
for all $s=1,2,4$.
\end{prop}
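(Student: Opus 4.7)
The core observation will be that $\frac{1}{2}\sum_{a=0}^1(-1)^a \tilde F_s((-1)^a q) = (\tilde F_s(q)-\tilde F_s(-q))/2$ is nothing but the odd-degree part of the formal power series $\tilde F_s(q)$. Thus the claim reduces to showing the identity $[\tilde F_s(q)]_{\mathrm{odd}} = q^{2k_1-1}F_s(q)$ as formal power series, which I plan to verify by direct computation.

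First I would peel off the factors of $\tilde F_s$ involving only even powers of $q$, namely $\prod_{i=1}^l(1-q^{2v_i})^{-1}$ together with the factors of the form $(1-q^{4k_j-4k_1})^{-1}$ or $(1-q^{4k_1+4k_2-4})^{-1}$, since these are invariant under $q\mapsto-q$ and therefore commute with the operation of extracting the odd part. This reduces the task to computing $[T_s(q)]_{\mathrm{odd}}$ where $T_s(q):=\prod_{r=1}^s(1-q^{2k_r-1})^{-1}$. Since each $2k_r-1$ is odd, the parity of $\sum_r(2k_r-1)m_r$ coincides with that of $\sum_r m_r$, and splitting each $m_r$ by its parity gives
\[
[T_s(q)]_{\mathrm{odd}} = \frac{1}{\prod_{r=1}^s(1-q^{4k_r-2})}\cdot\sum_{\substack{S\subset\{1,\ldots,s\}\\|S|\text{ odd}}}\prod_{r\in S}q^{2k_r-1}.
\]

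The cases $s=1,2$ are then immediate: for $s=1$ the inner sum is just $q^{2k_1-1}$, and for $s=2$ it is $q^{2k_1-1}(1+q^{2k_2-2k_1})$, which combines with the peeled-off factor $(1-q^{4k_2-4k_1})^{-1}$ via the identity $(1+q^{2m})(1-q^{4m})^{-1}=(1-q^{2m})^{-1}$ to yield exactly $q^{2k_1-1}F_2$. The substantive case is $s=4$: the sum over odd-cardinality subsets has $\binom{4}{1}+\binom{4}{3}=8$ terms, and the main algebraic step is the factorization
\[
\sum_{\substack{S\subset\{1,2,3,4\}\\|S|\text{ odd}}}\prod_{r\in S}q^{2k_r-1} = q^{2k_1-1}\bigl(1+q^{2k_1+2k_2-2}\bigr)\bigl(1+q^{2k_3-2k_1}\bigr)\bigl(1+q^{2k_4-2k_1}\bigr),
\]
which will crucially use the constraint $k_1+k_2=k_3+k_4$ to match the $8$ monomials on each side (the four size-$1$ terms become $q^{2k_r-1}$, and the four size-$3$ terms become $q^{\sigma-(2k_r-1)}$ with $\sigma=4(k_1+k_2)-4$, which expansion confirms). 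Applying $(1+q^{2m})(1-q^{4m})^{-1}=(1-q^{2m})^{-1}$ once more to each of the three binomial factors (with $m=k_1+k_2-1,\,k_3-k_1,\,k_4-k_1$) then pairs them with the corresponding even factors previously peeled from $\tilde F_4$, producing $q^{2k_1-1}F_4(q)$. The whole argument is algebraic, and the only step requiring genuine thought is recognizing the factorization above; once its form is guessed, verification is routine bookkeeping with the linear relation $k_1+k_2=k_3+k_4$.
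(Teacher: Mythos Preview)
Your proof is correct and follows essentially the same route as the paper's: both reduce to the case $l=0$, factor out the even-in-$q$ pieces of $\tilde F_s$, and then verify the same numerator identity (which the paper writes as $\prod_r(1+q^{2k_r-1})-\prod_r(1-q^{2k_r-1})=2q^{2k_1-1}(1+q^{2k_1+2k_2-2})(1+q^{2k_3-2k_1})(1+q^{2k_4-2k_1})$, invoking $k_1+k_2=k_3+k_4$). Your framing via the odd part of $\tilde F_s$ and the odd-cardinality subset sum is a slightly more explicit unpacking of the step the paper leaves implicit, but the argument is the same.
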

\begin{proof}
We prove only the case when $s=4$. The others can be proved in a similar way. 
It suffices to consider the case $l=0$.
 Then we have, \[
\begin{aligned}
& \frac1{2}\sum_{a=0}^1 (-1)^a \tilde F_4((-1)^{a}q) \\
&=\frac12\frac1{1-q^{4k_1+4k_2-4}}\frac1{1-q^{4k_3-4k_1}}\frac1{1-q^{4k_4-4k_1}}\left( \prod_{r=1}^{4}\frac1{1-q^{2k_r-1}} - \prod_{r=1}^{4}\frac1{1+q^{2k_r-1}}\right) \\
&=\frac1{1-q^{4k_1+4k_2-4}}\frac1{1-q^{4k_3-4k_1}}\frac1{1-q^{4k_4-4k_1}} \\
& \hspace{130pt} \times \frac{q^{2k_1-1}(1+q^{2k_3-2k_1})(1+q^{2k_4-2k_1})(1+q^{2k_1+2k_2-2})}{(1-q^{4k_1-2})(1-q^{4k_2-2})(1-q^{4k_3-2})(1-q^{4k_4-2})}\\
&=q^{2k_1-1}\frac1{1-q^{2k_1+2k_2-2}}\frac1{1-q^{2k_3-2k_1}}\frac1{1-q^{2k_4-2k_1}}\prod_{r=1}^{4}\frac1{1-q^{4k_r-2}} \\
&=q^{2k_1-1}\,F_4(q),
\end{aligned}
\]
where we used the condition $k_1+k_2=k_3+k_4$ in the second equality.
\end{proof}

\begin{prop}
\label{prop:K-formula-2}
Let $l_0,l_1\in\mathbb{Z}_{\ge 0}$, $k_1, k_2, v_{0,1},\ldots, v_{0,l_0}, v_{1,l_1},\ldots, v_{1,l_1} \in\mathbb{N}$. Set
\begin{equation*}
\left\{
\begin{aligned}
F_2(q) &=  \prod_{r=1}^1\frac1{(1-q^{4k_r-2})^2}\prod_{i=1}^{l_0}\frac1{1-q^{2v_{0,i}}}\prod_{j=1}^{l_1}\frac1{1-q^{2v_{1,j}}},\\
\tilde F_2(q,t) &= \prod_{r=1}^1\frac1{(1-q^{2k_r-1})(1-tq^{2k_r-1})}\prod_{i=1}^{l_0}\frac1{1-q^{2v_{0,i}}}\prod_{j=1}^{l_1}\frac1{1-tq^{2v_{1,j}}}.
\end{aligned}
\right.
\end{equation*}
For $k_1\neq k_2$, set
\begin{equation*}
\left\{
\begin{aligned}
F_4(q) &= \frac1{1-q^{2k_1+2k_2-2}}\frac1{1-q^{2k_2-2k_1}}\prod_{r=1}^{2}\frac1{(1-q^{4k_r-2})^2}\prod_{i=1}^{l_0}\frac1{1-q^{2v_{0,i}}}\prod_{j=1}^{l_1}\frac1{1-q^{2v_{1,j}}},\\
\tilde F_4(q,t) &= \frac1{1-q^{4k_1+4k_2-4}}\frac1{1-q^{4k_2-4k_1}}\prod_{r=1}^{2}\frac1{(1-q^{2k_r-1})(1-tq^{2k_r-1})}\prod_{i=1}^{l_0}\frac1{1-q^{2v_{0,i}}}\prod_{j=1}^{l_1}\frac1{1-tq^{2v_{1,j}}}.
\end{aligned}
\right.
\end{equation*}
Then we have \[
\begin{aligned}
q^{2k_1-1}\,F_s(q) = \frac1{2}\sum_{a=0}^1\frac1{2}\sum_{b=0}^1 (-1)^a \tilde F_s((-1)^{a}q,(-1)^b)
\end{aligned}
\]
for both $s=2$ and $4$.
\end{prop}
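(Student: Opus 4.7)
The plan is to parallel the proof of Proposition \ref{prop:K-formula}, with the double sum over $(a,b)$ handled by a change of variables. First, I would introduce $\sigma := (-1)^a$ and $\tau := (-1)^{a+b}$, so that $(a,b)\mapsto(\sigma,\tau)$ is a bijection $\{0,1\}^2 \to \{\pm 1\}^2$ with $(-1)^b = \sigma\tau$. Since $2k_r-1$ is odd while $2v_{0,i}$ and $2v_{1,j}$ are even, the substitution $(q,t)\mapsto((-1)^a q,(-1)^b)$ produces the factors $(1-\sigma q^{w_r})^{-1}$, $(1-\tau q^{w_r})^{-1}$, $(1-\sigma\tau\, q^{2v_{1,j}})^{-1}$, and $(1-q^{2v_{0,i}})^{-1}$, where $w_r := 2k_r-1$; the prefactor in the $s=4$ case is invariant under $q\to-q$ and passes through unchanged.

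The key step is to split the sum by the sign of $\sigma\tau$. When $\sigma\tau = -1$, the product $(1-\sigma q^{w_r})^{-1}(1-\tau q^{w_r})^{-1}$ collapses to $(1-q^{2w_r})^{-1}$ and each $v_{1,j}$ factor becomes $(1+q^{2v_{1,j}})^{-1}$, neither depending on whether $(\sigma,\tau) = (1,-1)$ or $(-1,1)$, while the weight $\sigma$ takes opposite signs in the two cases, so the $\sigma\tau=-1$ contribution vanishes. Only $(\sigma,\tau)\in\{(1,1),(-1,-1)\}$ survive, contributing
\[
\frac{P(q)A(q)}{4}\prod_j\frac{1}{1-q^{2v_{1,j}}}\cdot\frac{\Pi_+^2 - \Pi_-^2}{\prod_r(1-q^{2w_r})^2},
\]
where $\Pi_\pm := \prod_r(1\pm q^{w_r})$, $A(q) := \prod_i(1-q^{2v_{0,i}})^{-1}$, and $P(q)$ is the $s=4$ prefactor (or $1$ if $s=2$).

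The numerator $\Pi_+^2 - \Pi_-^2$ equals $4q^{w_1}$ for $s=2$; for $s=4$, the factorization $\Pi_+^2 - \Pi_-^2 = (\Pi_+ + \Pi_-)(\Pi_+ - \Pi_-)$ together with a direct expansion yields $4(q^{w_1}+q^{w_2})(1+q^{w_1+w_2})$. Substituting these and repeatedly using $1-q^{2Y}=(1-q^Y)(1+q^Y)$ to cancel the prefactor $(1-q^{4k_1+4k_2-4})^{-1}(1-q^{4k_2-4k_1})^{-1}$ against the factors $(1+q^{w_2-w_1})(1+q^{w_1+w_2})$ in the $s=4$ case produces exactly $q^{2k_1-1}F_s(q)$. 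The main obstacle is finding the right reparameterization at the outset and noticing the $\sigma\tau=-1$ cancellation; without these, one is left with a four-term sum whose $a$- and $b$-dependence is entangled through $\sigma\tau q^{2v_{1,j}}$ and resists clean simplification.
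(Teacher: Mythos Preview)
Your proposal is correct and follows essentially the same route as the paper. The paper observes directly that $\tilde F_s(q,-1)=\tilde F_s(-q,-1)$, so the $b=1$ terms cancel in the $a$-sum and only $t=1$ survives; your change of variables $(\sigma,\tau)=((-1)^a,(-1)^{a+b})$ is a repackaging of the same cancellation (the $\sigma\tau=-1$ case is exactly $t=-1$), and your subsequent computation of $\Pi_+^2-\Pi_-^2$ together with the factorizations $1-q^{2Y}=(1-q^Y)(1+q^Y)$ reproduces the paper's explicit calculation line for line.
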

\begin{proof}
We prove only the case when $s=4$. The other case can be proved in a similar way. We easily find that \[
\tilde F_4(q,-1) = \tilde F_4(-q,-1)
\]
and these terms in the sum of the right-hand side of the equality cancel each other. Therefore, only the terms with $t=1$ remain, so that we have \[
\frac1{2}\sum_{a=0}^1\frac1{2}\sum_{b=0}^1 (-1)^a \tilde F_4((-1)^{a}q,(-1)^b) 
=\frac1{4}\sum_{a=0}^1 (-1)^a \tilde F_4((-1)^{a}q,1).
\] As in the last proposition, it is enough to prove the case when $l_0=l_1=0$. Under these conditions, we have\[
\begin{aligned}
&\frac1{4}\sum_{a=0}^1 (-1)^a \tilde F_4((-1)^{a}q,1) \\
&=\frac1{4}\frac1{1-q^{4k_1+4k_2-4}}\frac1{1-q^{4k_2-4k_1}}\left(\prod_{r=1}^{2}\frac1{(1-q^{2k_r-1})^2} - \prod_{r=1}^{2}\frac1{(1+q^{2k_r-1})^2}\right) \\
&=\frac1{4}\frac1{1-q^{4k_1+4k_2-4}}\frac1{1-q^{4k_2-4k_1}} \cdot\frac{4q^{2k_1-1}(1+q^{2k_2-2k_1})(1+q^{2k_1+2k_2-2})}{(1-q^{4k_1-2})^2(1-q^{4k_2-2})^2} \\
&=q^{2k_1-1}\frac1{1-q^{2k_1+2k_2-2}}\frac1{1-q^{2k_2-2k_1}}\prod_{r=1}^{2}\frac1{(1-q^{4k_r-2})^2} \\
&=q^{2k_1-1}F_4(q),
\end{aligned}
\]
which is what we wanted to prove.
\end{proof}

\begin{prop}
\label{prop:K-formula-3}
Let $l_{00},l_{01},l_{10},l_{11}\in\mathbb{Z}_{\ge 0}$, $k, v_{00,1},\ldots, v_{00,l_{00}}, v_{01,1} \ldots, v_{11,l_{11}} \in\mathbb{N}$, and set
\begin{equation*}
\left\{
\begin{aligned}
F(q) &= \frac1{(1-q^{4k-2})^5}\prod_{p_0=0}^1\prod_{p_1=0}^1\prod_{i=1}^{l_{p_1p_0}}\frac1{1-q^{2v_{p_1p_0,i}}} ,\\
\tilde F(q,t_0,t_1) &=  \frac1{1-q^{8k-4}}\prod_{p_0=0}^1\prod_{p_1=0}^1 \left(\frac1{1-t_1^{p_1}t_0^{p_0}q^{2k-1}}\prod_{i=1}^{l_{p_1p_0}}\frac1{1-t_1^{p_1}t_0^{p_0}q^{2v_{p_1p_0,i}}}\right). \\
\end{aligned}
\right.
\end{equation*}
Then we have \[
\begin{aligned}
q^{2k-1}\,F(q) = \frac1{2}\sum_{a=0}^1\frac1{2}\sum_{b_0=0}^1\frac1{2}\sum_{b_1=0}^1 (-1)^a \tilde F((-1)^{a}q,(-1)^{b_0},(-1)^{b_1})
\end{aligned}
\]
\end{prop}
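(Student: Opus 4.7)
The plan is to follow the same template as the proofs of Propositions~\ref{prop:K-formula} and \ref{prop:K-formula-2}: I would split the triple sum on the right-hand side into four cases indexed by $(b_0,b_1)\in\{0,1\}^2$, show that the three cases with $(b_0,b_1)\neq(0,0)$ vanish by a parity argument, and then evaluate the surviving $(0,0)$ contribution directly.

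The vanishing argument goes as follows. The factor $q^{8k-4}$ and the factors $q^{2v_{p_1p_0,i}}$ are all invariant under $q\mapsto -q$, so $\tilde F((-1)^a q,(-1)^{b_0},(-1)^{b_1})$ is independent of $a$ provided the ``odd'' product $\prod_{p_0,p_1}(1-(-1)^{b_1p_1+b_0p_0}q^{2k-1})^{-1}$ is even in $q$. For each $(b_0,b_1)\in\{(1,0),(0,1),(1,1)\}$ the sign $(-1)^{b_1p_1+b_0p_0}$ takes the values $+1$ and $-1$ equally often as $(p_0,p_1)$ ranges over $\{0,1\}^2$, so this product collapses to $(1-q^{2k-1})^{-2}(1+q^{2k-1})^{-2}=(1-q^{4k-2})^{-2}$, which is manifestly even in $q$. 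Consequently all three of these cases contribute zero to the alternating $a$-sum.

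All that survives is
\[
\tfrac{1}{8}\bigl[\tilde F(q,1,1)-\tilde F(-q,1,1)\bigr]
=\frac{1}{8(1-q^{8k-4})}\prod_{p_0,p_1,i}\frac{1}{1-q^{2v_{p_1p_0,i}}}\left[\frac{1}{(1-q^{2k-1})^4}-\frac{1}{(1+q^{2k-1})^4}\right].
\]
Applying the elementary identity $(1+x)^4-(1-x)^4=8x(1+x^2)$ with $x=q^{2k-1}$, and using $1-q^{8k-4}=(1-q^{4k-2})(1+q^{4k-2})$, collapses the bracket into $\dfrac{q^{2k-1}}{(1-q^{4k-2})^5}$, giving precisely $q^{2k-1}F(q)$. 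The only real obstacle is the case-by-case bookkeeping in the three vanishing cases; once that is in hand, the final evaluation of the $(0,0)$ term is a one-line manipulation.
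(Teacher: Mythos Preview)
Your proof is correct and follows essentially the same route as the paper's: both arguments first observe that $\tilde F(q,t_0,t_1)$ is even in $q$ whenever $(t_0,t_1)\neq(1,1)$ (you make the reason for this explicit via the equidistribution of the sign $(-1)^{b_1p_1+b_0p_0}$, while the paper simply asserts it), and both then evaluate the surviving $(0,0)$ term by the identity $(1+x)^4-(1-x)^4=8x(1+x^2)$.
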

\begin{proof}
We easily find that \[
\begin{aligned}
\tilde F(q,1,-1) &= \tilde F(-q,1,-1) ,\\
\tilde F(q,-1,1) &= \tilde F(-q,-1,1) ,\\
\tilde F(q,-1,-1) &= \tilde F(-q,-1,-1).
\end{aligned}
\]
Hence, as before, we have \[
\frac1{2}\sum_{a=0}^1\frac1{2}\sum_{b_0=0}^1\frac1{2}\sum_{b_1=0}^1 (-1)^a \tilde F((-1)^{a}q,(-1)^{b_0},(-1)^{b_1}) 
=\frac1{8}\sum_{a=0}^1 (-1)^a \tilde F((-1)^{a}q,1,1) .
\]
Clearly, it is sufficient to prove the case when $l_{00} = l_{01} = l_{10} = l_{11} = 0$. 
Under these conditions, we have\[
\begin{aligned}
\frac1{8}\sum_{a=0}^1 (-1)^a \tilde F((-1)^{a}q,1,1) 
&=\frac1{8}\frac1{1-q^{8k-4}}\left(\frac1{(1-q^{2k-1})^4} - \frac1{(1+q^{2k-1})^4}\right) \\
&=\frac1{8}\frac1{1-q^{8k-4}}\cdot\frac{4q^{2k-1}\cdot 2(1+q^{4k-2})}{(1-q^{4k-2})^4} \\
&=q^{2k-1}\frac1{(1-q^{4k-2})^5} \\
&=q^{2k-1}\,F(q),
\end{aligned}
\]
which is what we wanted to prove.
\end{proof}

\begin{proof}[Proof of Theorem~\ref{thm:mainA}]
The generating function in each case is given in Proposition~\ref{prop:genX}.
\begin{itemize}
\item 
For $\Gamma = \mathbb{Z}_{m}$, put $(s; k_1; l; v_i) = (1;1;\lfloor\frac{m}{2}\rfloor;1)$ in Proposition~\ref{prop:K-formula}.
\item 
For $\Gamma = \hat{\cT}$, put $(s; k_1, k_2; l; v_1, v_2) = (2;1,2;2;1,2)$ in Proposition~\ref{prop:K-formula}. 
\item 
For $\Gamma = \hat{\cI}$, put $(s; k_1, k_2, k_3, k_4; l; v_1, v_2) = (4;1,3,2,2;2;2,4)$ in Proposition~\ref{prop:K-formula}.
\item 
For $\Gamma = \hat{\cD}_{2m+1}$, put $(s; k_1; l_0, l_1; v_{0,1}, v_{0,2},\ldots,v_{0,m+1}, v_{1,j}) = (2;1;m+1,m;1,2,\ldots,2,1)$ in Proposition~\ref{prop:K-formula-2}.
\item 
For $\Gamma = \hat{\cO}$, put $(s; k_1, k_2; l_0, l_1; v_{0,1}, v_{1,1}) = (4;1,2;1,1;2,1)$ in Proposition~\ref{prop:K-formula-2}.
\item 
Finally for $\Gamma = \hat{\cD}_{2m}$, put $(k; l_{00}, l_{01}, l_{10}, l_{11}; v_{00,i}, v_{01,i}) = (1;m-1,m-1,0,0;2,1)$ in Proposition~\ref{prop:K-formula-3}.
\end{itemize}
This completes the proof.
\end{proof}

\section{The case $(G,\tilde G)=(PSp(n),Spin(2n+1))$, $\Gamma=\hat\cO $}
\label{sec:final}

In the previous section, we have treated the case when $(G,\tilde G)=(Sp(n),SO(2n+1))$. Using that knowledge, let us additionally discuss some cases when $(G,\tilde G)=(PSp(n),Spin(2n+1))$ and the ADE type of $\Gamma$ is exceptional.

Our aim is to establish the theorem below:

\begin{thm}
We have $N(\Gamma,PSp(n))=N(\Gamma,Spin(2n+1))$ for $\Gamma=\hat\cT ,\hat\cO ,\hat\cI $.
\label{thm:Y}
\end{thm}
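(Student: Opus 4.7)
The plan is to derive Theorem~\ref{thm:Y} from the refined Conjecture~\ref{conj:finer} applied to the data $(G,\tilde G) = (Sp(n), SO(2n+1))$, $(H,\tilde H) = (PSp(n), Spin(2n+1))$, $Z = Z^\wedge = \bZ_2$ for the exceptional $\Gamma$; Corollary~\ref{cor:main} then immediately yields the stated equality. The argument splits according to the mod-$2$ cohomology of $\Gamma$.

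For $\Gamma = \hat\cT$ and $\hat\cI$ one has $\Gamma_{ab} = \bZ_3$ and $\Gamma_{ab} = 1$ respectively. The identification $H^k(B\Gamma,\bZ_2) \simeq H^k(S^3/\Gamma,\bZ_2)$ for $k \le 2$ noted in the introduction, combined with Poincar\'e duality and the universal coefficient theorem on the closed $3$-manifold $S^3/\Gamma$, gives $H^1(B\Gamma,\bZ_2) = H^2(B\Gamma,\bZ_2) = 0$. Consequently the obstruction to lifting any $\Gamma \to PSp(n)$ to $Sp(n)$ vanishes, and the torsor of such lifts is a single point; the same holds for lifting $\Gamma \to SO(2n+1)$ to $Spin(2n+1)$. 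Existence and uniqueness of lifts force conjugacy in the quotient to coincide with conjugacy in the cover, so $\psi(\Gamma, PSp(n)) \simeq \psi(\Gamma, Sp(n))$ and $\psi(\Gamma, Spin(2n+1)) \simeq \psi(\Gamma, SO(2n+1))$. Combined with Theorem~\ref{thm:mainA}, these two cases are settled at once.

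For $\Gamma = \hat\cO$ we have $\hat\cO_{ab} = \bZ_2$, so $H^1(B\hat\cO,\bZ_2) \simeq H^2(B\hat\cO,\bZ_2) \simeq \bZ_2$ and the refined conjecture carries genuine new content. The strategy is to extend the generating-function method of Section~\ref{sec:BC} to a version graded by the characters of $F(\hat\cO,\bZ_2) \simeq \bZ_2 \times \bZ_2$. On the $Sp(n)$ side, for the trivial class $w = 0$ one counts orthosymplectic representations of $\hat\cO$ using the irreducible character data of Section~\ref{sec:BC}; for the nonzero class, one counts representations of the unique nontrivial $\bZ_2$-central extension $\check{\hat\cO}$ (a group of order $96$) on which the central element acts by $-1$. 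In both cases one tracks the reality type of each irreducible constituent and the $H^1$-action by tensoring with the sign character $1'$. On the $Spin(2n+1)$ side, one enumerates lifts of orthogonal homomorphisms $\hat\cO \to SO(2n+1)$, with each irreducible summand contributing its own Stiefel--Whitney obstruction class in $H^2(B\hat\cO,\bZ_2)$, and the $H^1$-action shifting the lift by a sign.

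The main obstacle is two-fold. First, one must identify $\check{\hat\cO}$ and classify its irreducible representations with nontrivial central character, together with their reality types and the effect of tensoring by $1'$; these data are not directly captured by the McKay correspondence for $\hat\cO$ and will require a finite case-by-case computation starting from the presentation and the explicit representative \eqref{explicit-rep} of Section~\ref{sec:A}. Second, showing that the four resulting bigraded generating series on the two sides agree pairwise under the swap isomorphism of Definition~\ref{defn:swap} will require a new algebraic identity extending Propositions~\ref{prop:K-formula-2} and~\ref{prop:K-formula-3} to two independent twisting parameters. I expect this identity to reduce again to a partial-fraction manipulation in the style of Section~\ref{sec:BC}, with the two twist variables playing roles analogous to the single auxiliary variable $t$ in Proposition~\ref{prop:K-formula-2}.
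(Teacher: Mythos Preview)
Your high-level architecture matches the paper's: dispose of $\hat\cT$ and $\hat\cI$ via the vanishing of $H^1(B\Gamma,\bZ_2)$ and $H^2(B\Gamma,\bZ_2)$, which forces $\psi(\Gamma,PSp(n))\simeq\psi(\Gamma,Sp(n))$ and $\psi(\Gamma,Spin(2n+1))\simeq\psi(\Gamma,SO(2n+1))$ and hence reduces those cases to Theorem~\ref{thm:mainA}; then for $\hat\cO$ establish Conjecture~\ref{conj:finer} and invoke Corollary~\ref{cor:main}. This is exactly the paper's route (Proposition~\ref{prop:boo} and Theorem~\ref{thm:bar}).

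The execution you sketch for $\hat\cO$ differs from the paper's in two technical respects worth flagging. First, the paper avoids analysing the order-$96$ extension $\check{\hat\cO}$ from scratch: the substitution $\rho(a):=-i\,\tilde\rho(a)$, $\rho(b):=\tilde\rho(b)$ converts a $(\bZ_2,1)$-twisted $Sp$-homomorphism into a \emph{genuine} complex representation of $\hat\cO$ itself, so the twisted irreducibles are labelled by the same McKay nodes $\tilde 1,\tilde 2,\ldots,\tilde 4,\tilde 2''$ and one only has to recompute their reality types (which change in a simple, predictable way). Second, the paper does not need a two-parameter algebraic identity. It splits each $V_{\bZ_2,m}$ into the $\pm1$ eigenspaces of $x\in H^1(B\hat\cO,\bZ_2)$ via the fixed/not-fixed decomposition (Proposition~\ref{prop:vec-to-combinatorics}), uses the already-established $Sp/SO$ equality to eliminate one of the four resulting scalar comparisons, and reduces what remains to one nontrivial identity (Proposition~\ref{prop:K-formula-4}, involving a single $\bZ_4$-valued twist $i^b$ coming from the mod-$4$ condition on $n_{1'}-n_{3'}+n_{2''}$) plus two elementary checks. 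Your proposed route would also succeed, but these shortcuts make the bookkeeping considerably lighter.
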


We first deal with the easy case $\Gamma=\hat\cT $ or $\hat\cI $.
\begin{prop}
\label{prop:boo}
For $\Gamma=\hat\cT $ or $\hat\cI $,
we have $N(\Gamma,Sp(n))=N(\Gamma,PSp(n))$ 
and $N(\Gamma,SO(2n+1))=N(\Gamma,Spin(2n+1))$.
\end{prop}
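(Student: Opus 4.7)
The plan is to reduce both equalities to the vanishing of $H^i(B\Gamma, \bZ_2)$ for $i=1,2$, and then invoke Proposition~\ref{prop:gauging}. Since $PSp(n) = Sp(n)/\bZ_2$ and $SO(2n+1) = Spin(2n+1)/\bZ_2$ with $Z = \bZ_2$ the center in each case, Proposition~\ref{prop:gauging} gives
\[
N(\Gamma, PSp(n)) = \dim V_{\bZ_2}(\Gamma, Sp(n))^{H^1(B\Gamma, \bZ_2)},
\]
and analogously for $(Spin(2n+1), SO(2n+1))$. If both $H^1(B\Gamma, \bZ_2)$ and $H^2(B\Gamma, \bZ_2)$ vanish, then $V_{\bZ_2}(\Gamma, G) = V_{\bZ_2,0}(\Gamma, G) = V_{\{e\}}(\Gamma, G)$ by Remark~\ref{rem:triv}, and the $H^1$-invariance is automatic, so the right-hand side collapses to $N(\Gamma, G)$.

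For the vanishing of $H^1(B\Gamma, \bZ_2) = \mathrm{Hom}(\Gamma^{\mathrm{ab}}, \bZ_2)$, it suffices to show that $\Gamma^{\mathrm{ab}}$ admits no $\bZ_2$ quotient. Using the presentations collected in Section~\ref{sec:A}, abelianizing $a^3 = b^3 = (ab)^2$ yields $\hat\cT^{\mathrm{ab}} = \bZ_3$, and abelianizing $a^5 = b^3 = (ab)^2$ yields $\hat\cI^{\mathrm{ab}}$ trivial. In both cases $H^1(B\Gamma, \bZ_2) = 0$. The vanishing of $H^2(B\Gamma, \bZ_2)$ then follows from the self-duality $\bZ_2^\wedge = \bZ_2$ together with the Poincar\'e-duality-based isomorphism $\iota_{\bZ_2}: H^1(B\Gamma; \bZ_2) \xrightarrow{\sim} H^2(B\Gamma; \bZ_2)^\wedge$ recorded in the introduction.

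Combining these two ingredients gives both $N(\Gamma, Sp(n)) = N(\Gamma, PSp(n))$ and $N(\Gamma, SO(2n+1)) = N(\Gamma, Spin(2n+1))$. There is no real obstacle here; the only thing one must verify is the abelianization computation, which is immediate from the standard presentations. (By contrast, for $\hat\cO$ one has $\hat\cO^{\mathrm{ab}} = \bZ_2$, so $H^1(B\hat\cO, \bZ_2) \neq 0$, explaining why that case is handled by a separate combinatorial argument later in the section.)
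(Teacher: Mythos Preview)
Your proof is correct and rests on the same underlying mechanism as the paper's: the vanishing of $H^1(B\Gamma;\bZ_2)$ and $H^2(B\Gamma;\bZ_2)$ for $\Gamma=\hat\cT,\hat\cI$, which forces homomorphisms into $G$ and into $G/\bZ_2$ to be in bijection. The paper argues this lifting bijection directly (citing only $H^2=0$ explicitly, with $H^1=0$ implicit for uniqueness of the lift), whereas you package the same argument through Proposition~\ref{prop:gauging} and obtain $H^2=0$ from $H^1=0$ via the Poincar\'e-duality isomorphism $\iota_{\bZ_2}$ rather than computing it separately. Your route is slightly more systematic and makes both vanishings explicit; the paper's is more hands-on but terser. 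Substantively the two are the same proof.
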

\begin{proof}
Recall that a homomorphism $f:\Gamma\to G/Z$, where $Z$ is a central Abelian subgroup of $G$,
defines a homomorphism $\check f:\check \Gamma\to G$,
where $\check \Gamma$ is a central extension $0\to Z\to \check \Gamma \to \Gamma \to 0$,
such that $\check f$ maps $Z$ identically.
Let $Z=\bZ_2$. 
Then $H^2(B\Gamma,\bZ_2)=0$ for $\Gamma=\hat\cT $ and $\Gamma=\hat\cI $.
This means that any such $\check f:\check \Gamma\to G$ 
can be modified to a homomorphism $\Gamma\to G$.
Therefore, there is a one-to-one correspondence between homomorphisms from $\Gamma$ to  $G$
and homomorphisms from $\Gamma$ to $G/\bZ_2$.
\end{proof}

We now restrict our attention to the case $\Gamma=\hat\cO $.
In this case, it is no more difficult to prove the finer version of the conjecture,
so we are going to establish the following theorem:
\begin{thm}
\label{thm:bar}
Conjecture~\ref{conj:finer} holds 
for the case $\Gamma=\hat\cO $, 
$G=Sp(n)$, $H=PSp(n)$, $\tilde H=Spin(2n+1)$, $\tilde G=SO(2n+1)$ and $Z=\bZ_2$.
\end{thm}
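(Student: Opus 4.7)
The plan is to refine the generating-function calculation of Section~\ref{sec:BC} so as to track the action of $F(\hat\cO;\bZ_2) = H^1(B\hat\cO;\bZ_2)\times H^2(B\hat\cO;\bZ_2)^\wedge$, and then to verify that the two resulting refined generating functions, one from the $Sp(n)$-side and one from the $Spin(2n+1)$-side, agree once we identify the two copies of $F(\hat\cO;\bZ_2)$ via the swap isomorphism of Definition~\ref{defn:swap}. First I would pin down $F(\hat\cO;\bZ_2)$ concretely. Since $\hat\cO_{\mathrm{ab}}\simeq \bZ_2$ we have $H^1(B\hat\cO;\bZ_2)\simeq \bZ_2$, and via the isomorphism $\iota_{\bZ_2}$ of the introduction this forces $H^2(B\hat\cO;\bZ_2)\simeq \bZ_2$ as well, so $F(\hat\cO;\bZ_2)\simeq \bZ_2\times\bZ_2$ and the swap $s$ exchanges the two factors. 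I would then fix a representative central extension $\check{\hat\cO}$ of order $96$ realizing the non-trivial class of $H^2(B\hat\cO;\bZ_2)$.

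Next I would identify $(\bZ_2,w)$-twisted homomorphisms $\hat\cO \to Sp(n)$ with ordinary complex representations of $\check{\hat\cO}$ of quaternionic type on which the new central $\bZ_2$ acts by $-I$ when $w\neq 0$ and trivially when $w = 0$, and analogously on the $Spin(2n+1)$-side (where the condition becomes existence of a spin lift of the corresponding $SO(2n+1)$-representation). I would then list the complex irreducible representations of $\check{\hat\cO}$, recording for each its dimension, its reality type (strictly real, pseudoreal, or complex conjugate pair), the action of the non-trivial element of $H^1(B\hat\cO;\bZ_2)$ (tensoring with the sign character of $\hat\cO$ pulled back to $\check{\hat\cO}$), and the value of the determinant character viewed as an element of $H^2(B\hat\cO;\bZ_2)^\wedge$. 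This is the one-step-up analogue, for $\check{\hat\cO}$, of the reality-and-determinant tabulation already carried out for $\hat\cO$ itself in Section~\ref{sec:BC}.

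With this tabulation in hand I would introduce two formal sign-variables $t_0, t_1$, refining the variable $t$ of Propositions~\ref{prop:K-formula-2} and \ref{prop:K-formula-3}, so that $t_0$ tracks the $H^1$-factor and $t_1$ tracks the $H^2{}^\wedge$-factor of $F(\hat\cO;\bZ_2)$. The resulting generating functions are the graded characters of $V_{\bZ_2}(\hat\cO, Sp(n))$ and $V_{\bZ_2}(\hat\cO, Spin(2n+1))$ as representations of $\bZ_2\times\bZ_2$. The swap isomorphism manifests as the exchange $t_0\leftrightarrow t_1$ on the two sides of the desired identity, and the specialization $t_0 = t_1 = 1$ recovers the $\hat\cO$-row of Theorem~\ref{thm:mainA}, providing an immediate sanity check. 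The identity itself would be proved by a refinement of Proposition~\ref{prop:K-formula-3}: the extra sign-variables enter multiplicatively into each partial fraction and the key cancellations are still driven by pairs of terms differing by the substitution $q\mapsto -q$.

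The main obstacle is the second step. Writing down an explicit presentation of the $96$-element central extension $\check{\hat\cO}$ and computing reality types, $H^1$-actions, and twisted determinants for its full list of irreducible complex representations is a non-trivial extension of the case-by-case work already done for $\hat\cO$ in Section~\ref{sec:BC}, and errors in this table would invalidate the subsequent generating-function identity. Once the correct table is in hand, however, the remaining algebra is a mechanical refinement of the arguments already present in Section~\ref{sec:BC}, so the conceptual difficulty is concentrated in the initial representation-theoretic bookkeeping rather than in any new combinatorial ingredient.
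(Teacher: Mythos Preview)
Your high-level strategy---refine the generating functions of Section~\ref{sec:BC} by extra sign variables so as to record the full $\bZ_2\times\bZ_2$ character, then check that the $Sp$ and $Spin$ sides match under swap---is exactly the paper's approach. But you have misidentified where the work lies, and the plan as written has a genuine gap on the $Spin$ side.

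On the $Sp(n)$ side your proposal is fine; the paper uses a shortcut (multiply $\rho(a)$ by $i$ to turn twisted irreducibles into genuine $\hat\cO$-irreducibles with altered reality type) rather than tabulating $\check{\hat\cO}$, but these are equivalent. The problem is on the $Spin(2n+1)$ side. A $(\bZ_2,w)$-twisted homomorphism $\hat\cO\to Spin(2n+1)$, when composed with $Spin(2n+1)\to SO(2n+1)$, kills the new central $\bZ_2$ and yields an ordinary homomorphism $\hat\cO\to SO(2n+1)$. So the irreducibles of $\check{\hat\cO}$ are \emph{not} the relevant objects here, and the ``determinant character viewed as an element of $H^2(B\hat\cO;\bZ_2)^\wedge$'' does not recover $w$. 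What is actually needed is: (i)~for each real representation of $\hat\cO$, the value of $w_2$, which the paper obtains by computing Stiefel--Whitney classes of the real irreducibles; and (ii)~for each $\rho:\hat\cO\to SO(2n+1)$ that lifts, a criterion for whether the two lifts $\tilde\rho$ and $x\tilde\rho$ are $Spin(2n+1)$-conjugate. Point~(ii) is the technical heart of the section: it requires analysing the commutant of $\rho$ inside $O(2n+1)$, its connected components, and how the sign in $\tilde g\tilde\rho(a)\tilde g^{-1}=\pm\tilde\rho(a)$ depends on the component (Proposition~\ref{prop:x-action-Spin}). This step is entirely absent from your outline, and without it the $t_0$-variable on the $Spin$ side cannot be evaluated.
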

The proof of this theorem will occupy the bulk of the rest of this paper.

\begin{cor}
\label{cor:bar}
We have $N(\hat\cO,PSp(n))=N(\hat\cO,Spin(2n+1))$.
\end{cor}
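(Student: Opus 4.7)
The plan is to extend the generating-function method of Section~\ref{sec:BC} by introducing two auxiliary sign parameters $s, t \in \{+1, -1\}$, tracking respectively the $H^2$-twist class $w$ and the $H^1$-action. Using $\hat\cO_{ab} = \bZ_2$ together with the universal coefficient theorem, we have $F(\hat\cO; \bZ_2) = H^1(B\hat\cO; \bZ_2) \times H^2(B\hat\cO; \bZ_2)^\wedge \simeq \bZ_2 \times \bZ_2$. Since this is an Abelian group of order $4$, Conjecture~\ref{conj:finer} reduces to matching four generating functions in $q$ on the two sides, one for each character of $\bZ_2 \times \bZ_2$, with the swap isomorphism of Definition~\ref{defn:swap} interchanging the roles of $s$ and $t$. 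The character at the identity element equals the total dimension, which agrees with the existing Theorem~\ref{thm:mainA} only after summing the twisted and untwisted sectors, so all four characters genuinely require new computation.

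On the $Sp(n)$-side I would first classify the irreducible representations of the nontrivial central $\bZ_2$-extension $\check{\hat\cO}$ of $\hat\cO$, recording for each (i) its reality type $\bR$, $\bC$, or $\bH$, (ii) the central $\bZ_2$-action, and (iii) the image of its determinant in $\hat\cO_{ab} = \bZ_2$. A $(\bZ_2, w)$-twisted homomorphism $\hat\cO \to Sp(n)$ is then a pseudoreal representation of $\check{\hat\cO}$ on which the central element acts as $(-1)^w$. The refined generating function $Z_{Sp}(q; s, t)$ is a product analogous to the ones in Proposition~\ref{prop:genX}, each irreducible contributing a factor of $s^{\text{central}} t^{\text{det}}$ on top of the $q^{\dim}$ weight; evaluating at the four choices $(s, t) \in \{\pm 1\}^2$ yields the four characters. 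On the $Spin(2n+1)$-side, a $(\bZ_2, w)$-twisted homomorphism corresponds to an orthogonal $(2n+1)$-dimensional representation of $\hat\cO$ together with a choice of $Spin$-lift; the obstruction to the existence of a lift is the Stiefel--Whitney class $w_2 \in H^2(B\hat\cO; \bZ_2)$, which must equal $w$. Once $w_2$ is computed on each real (strict or pseudo) irreducible of $\hat\cO$ using the reality tables of Section~\ref{sec:BC}, an analogous refined generating function $Z_{Spin}(q; s, t)$ can be written.

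The final step is to verify $Z_{Sp}(q; s, t) = Z_{Spin}(q; t, s)$ as formal power series in $q$ for each of the four $(s, t)$, which is the natural multivariate generalization of Propositions~\ref{prop:K-formula-2} and~\ref{prop:K-formula-3}. The main obstacle will be the combined task of (a) computing $w_2$ on every orthogonal irreducible representation of $\hat\cO$ in order to pin down the $Spin$-side generating function, which requires more than the bare reality tables of Section~\ref{sec:BC}, and (b) manipulating the resulting four-variable rational identity into a form amenable to the same cancellation tricks used in the proof of Proposition~\ref{prop:K-formula-3}, where many contributions collapsed via the symmetry $\tilde F(q, \ldots, -1, \ldots) = \tilde F(-q, \ldots, -1, \ldots)$. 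With these two inputs in hand, the final algebraic verification should reduce to an elementary, if lengthy, calculation of the same flavor as the K-formulas already established.
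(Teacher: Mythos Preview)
Your high-level strategy matches the paper's exactly: the corollary is deduced from the refined statement (Theorem~\ref{thm:bar}) via Corollary~\ref{cor:main}, and Theorem~\ref{thm:bar} is established by computing, for each $(e,m)\in\{0,1\}^2$, the dimensions $\dim V^e_{\bZ_2,m}(\hat\cO,Sp(n))$ and $\dim V^m_{\bZ_2,e}(\hat\cO,Spin(2n+1))$ through explicit generating functions built from the (twisted) irreducible representations, with the $w_2$-computation on the $Spin$ side playing exactly the role you describe.

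There is, however, a real gap in how you encode the $H^1$-action. On the $Sp(n)$ side you propose that each irreducible contributes a multiplicative factor $t^{\det}$, so that evaluating at $t=\pm1$ yields the character at $1$ and at $x$. This is incorrect: the action of $x\in H^1(B\hat\cO;\bZ_2)$ on $\psi_{\bZ_2,m}(\hat\cO,Sp(n))$ is the \emph{permutation} action $\rho\mapsto 1'\otimes\rho$, not a scaling. (Indeed, since $Sp(n)\subset SU(2n)$, the determinant of any $\rho:\hat\cO\to Sp(n)$ is identically trivial, so your $t$-weight collapses to $1$.) The character at $x$ is therefore the number of $x$-fixed points, which the paper computes by pairing the irreducibles into $x$-orbits (Proposition~\ref{prop:Sp-refined-counting}) and yields a product over orbits rather than a product over irreducibles with a sign insertion.

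The analogous issue on the $Spin$ side is more serious and you do not address it: given $\rho:\hat\cO\to SO(2n+1)$ and a lift $\tilde\rho$ to $Spin(2n+1)$, one must decide whether $\tilde\rho$ and $x\tilde\rho$ are $Spin(2n+1)$-conjugate. Since both project to the same $\rho$, the answer depends on the component group of the centralizer of $\rho(\hat\cO)$ in $SO(2n+1)$ and on which of its elements lift to $Spin$ with the correct sign; this is the content of Proposition~\ref{prop:x-action-Spin} in the paper and produces the non-multiplicative condition ``$n_{2''}>0$ or ($n_1+n_3>0$ and $n_{1'}+n_{3'}>0$)''. No simple $t^{\det}$ or $t^{w_1}$ insertion captures this, and it is precisely here that the paper's computation requires the most care.
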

\begin{proof}
Apply Corollary~\ref{cor:main} to Theorem~\ref{thm:bar}.
\end{proof}

\begin{proof}[Proof of Theorem~\ref{thm:Y}]
Immediate from Proposition~\ref{prop:boo} and Corollary~\ref{cor:bar}.
\end{proof}

We can now concentrate on proving Theorem~\ref{thm:bar}.
Before doing this, we need some more general discussions,
and some more properties of irreducible representations of $\Gamma=\hat\cO $.

\subsection{Even more properties of irreducible representations of $\Gamma=\hat\cO $}
We let $\sG$ be either $G=Sp(n)$ or $\tilde H=Spin(2n+1)$.
Our presentation of $\hat\cO $ is \[
\hat\cO =\langle a,b \mid a^4=b^3=(ab)^2\rangle.
\]
The set $\psi_{\bZ_2,m}(\hat\cO ,\sG)$ for 
$m\in H^2(B\hat\cO ,\bZ_2)=\bZ_2$
consists of  pairs $(\hat\rho(a),\hat\rho(b))\in \sG^2$
satisfying $(-1)^m\rho(a)^4=\rho(b)^3=(\rho(a)\rho(b))^2$,
up to simultaneous conjugation by $G$.

The only nontrivial one-dimensional representation of $\hat\cO $, which was denoted by $1'$ but we now denote by $x$,
is $x(a)=-1$, $x(b)=1$.
It generates $\bZ_2$, and acts on $\psi_{\bZ_2,m}(\hat\cO ,\sG)$ by \[
x: (\rho(a),\rho(b)) \mapsto (\rho'(a),\rho'(b)) := (-\rho(a),\rho(b)),
\]
Note that $\rho$ and $\rho'$ can still be conjugate in $\sG$.

We let  \[
V_{\bZ_2,m}(\hat\cO ,\sG)=V_{\bZ_2,m}^0(\hat\cO ,\sG)\oplus V_{\bZ_2,m}^1(\hat\cO ,\sG)
\]
where $V_{\bZ_2,m}^e(\hat\cO ,\sG)$ is the subspace on which $x$ acts by $(-1)^e$.
Similarly, we let \[
\psi_{\bZ_2,m}(\hat\cO ,\sG) = \psi_{\bZ_2,m}^\text{fixed}(\hat\cO ,\sG) \sqcup \psi_{\bZ_2,m}^\text{not fixed}(\hat\cO ,\sG)
\] where the superscript `fixed', `not fixed' is with respect to the action by $x$.
\begin{prop}
\label{prop:vec-to-combinatorics}
We have \[
\begin{aligned}
\dim V_{\bZ_2,m}^0(\hat\cO ,\sG)&= \#\psi_{\bZ_2,m}^\text{fixed}(\hat\cO ,\sG) 
+  \frac12\#\psi_{\bZ_2,m}^\text{not fixed}(\hat\cO ,\sG),
\\
\dim V_{\bZ_2,m}^1(\hat\cO ,\sG)&= \phantom{\#\psi_{\bZ_2,m}^\text{fixed}(\hat\cO ,\sG) } 
+  \frac12\#\psi_{\bZ_2,m}^\text{not fixed}(\hat\cO ,\sG).
\end{aligned}
\]
\end{prop}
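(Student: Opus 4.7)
The plan is entirely elementary: this is the standard decomposition of a permutation representation of the cyclic group $\bZ_2 = \langle x \rangle$ into its two isotypic components. By definition $V_{\bZ_2,m}(\hat\cO,\sG)$ has a $\bC$-basis $\{v([f])\}_{[f] \in \psi_{\bZ_2,m}(\hat\cO,\sG)}$, and $x$ acts as a permutation of this basis via $x \cdot v([f]) = v([xf])$. Since $x^2 = 1$, the orbits of $\langle x\rangle$ on $\psi_{\bZ_2,m}(\hat\cO,\sG)$ have size either $1$ or $2$, giving the disjoint decomposition into $\psi^\text{fixed}$ and $\psi^\text{not fixed}$ used in the statement.

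The first step is to analyze the contribution of each orbit type separately. For a fixed orbit $\{[f]\}$, the vector $v([f])$ itself satisfies $x\cdot v([f]) = v([f])$, so it lies entirely in $V^0$ and contributes $1$ to $\dim V^0$ and $0$ to $\dim V^1$. For a non-fixed orbit $\{[f],[xf]\}$ of size $2$, the two vectors
\[
v_+ := \tfrac{1}{2}\bigl(v([f]) + v([xf])\bigr), \qquad v_- := \tfrac{1}{2}\bigl(v([f]) - v([xf])\bigr)
\]
form a basis of the $\bC$-span of $\{v([f]),v([xf])\}$, with $x \cdot v_\pm = \pm v_\pm$. Hence each such orbit contributes exactly $1$ to $\dim V^0$ and exactly $1$ to $\dim V^1$.

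The second step is to sum these contributions over all orbits. Since $\#\psi^\text{not fixed}(\hat\cO,\sG)$ counts the \emph{elements} of size-$2$ orbits (not the orbits themselves), the number of size-$2$ orbits is $\tfrac12 \#\psi^\text{not fixed}$. Combining with the fixed-orbit count gives the two claimed formulas. There is no real obstacle here; the only pitfall to avoid is confusing the number of non-fixed orbits with the number of non-fixed elements, which accounts for the factor of $\tfrac12$ in both formulas.
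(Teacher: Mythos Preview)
Your proof is correct and is precisely the elementary unpacking that the paper has in mind; the paper's own proof reads in its entirety ``Immediate by expanding the definitions.'' You have simply written out the standard decomposition of the $\bZ_2$-permutation module into its $(\pm1)$-eigenspaces, which is exactly what that one-line proof is pointing to.
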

\begin{proof}
Immediate by expanding the definitions.
\end{proof}

To compute $\dim V^e_{\bZ_2,m}(\hat \cO,\sG)$,
we need to learn to enumerate $\psi_{\bZ_2,m}(\hat\cO ,\sG)$,
and understand the action of $x$.
We look into each of them in turn.

\subsubsection{$\psi_{\bZ_2,m}(\hat\cO ,Sp(n))$}
We already understand $\psi_{\bZ_2,0}(\hat\cO ,Sp(n))=\psi(\hat\cO ,Sp(n))$.
To enumerate $\psi_{\bZ_2,1}(\hat\cO ,Sp(n))$,
we need to enumerate twisted homomorphisms given by pairs satisfying 
$\tilde\rho(a)^4=-\tilde\rho(ab)^{2}$.

To study them, note that $\tilde\rho(a),\tilde\rho(b)$ act on $\bH^n$.
Regard $\bH^n$ as $\bC^{2n}$, 
i.e.~we partially forget the quaternionic structure
and keep only the complex structure.
We then define $\rho(a):=-i \tilde\rho(a)$ and $\rho(b):=\tilde\rho(b)$.
They satisfy $\rho(a)^2=+\rho(ab)^2$,
and $\rho$ is a genuine complex representation of $\Gamma$.
This means that twisted representations $\tilde\rho$'s, in turn,
allow an irreducible decomposition
in terms of $(i\rho(a),\rho(b))$, where $\rho$ runs over ordinary irreducible
representations of $\Gamma$.
This operation preserves the (complex) dimensions 
and the action by $x$, but it changes the reality properties of representations.

Call the resulting irreducible twisted representations as 
\begin{equation}
\begin{array}{r@{}c@{}l}
&\tilde 2''&\\
&|&\\
\tilde 1-\tilde 2-\tilde 3-&\tilde 4&-\tilde 3'-\tilde 2'-\tilde 1'
\end{array}.
\end{equation}
Due to the multiplication by $i$ for $a$, their reality conditions do change.
It is easy to see that $(\tilde 1,\tilde 1')$,
$(\tilde 2,\tilde 2')$,
$(\tilde 3,\tilde 3')$
form complex conjugate pairs.
For $\tilde 2''$, using explicit matrices given in Eq.~\eqref{explicit-rep},
we can directly see that both $\tilde\rho_{2''}(a)$ and $\tilde \rho_{2''}(b)$ 
are in $SU(2)$, and so it is pseudoreal.
As $\tilde 4\otimes 2=\tilde 3+\tilde 3'+\tilde 2''$,
it means that $\tilde 4$ is strictly real. 

The multiplication by $x$ still exchanges $\tilde 1$ and $\tilde 1'$, $\tilde 2$ and $\tilde 2'$, and $\tilde 3$ and $\tilde 3'$, while fixing $\tilde 2''$.

\subsubsection{$\psi_{\bZ_2,m}(\hat\cO ,Spin(2n+1))$}

In this case, what we can easily study are homomorphisms to $SO(2n+1)$ 
rather than those to $Spin(2n+1)$.
So we need to study homomorphisms to $SO(2n+1)$,
and then discuss when and how they lift to (genuine and twisted) homomorphisms to $Spin(2n+1)$.

Consider $\tilde \rho: \Gamma \to O(2n+1)$ up to conjugation,
which can be easily counted via irreducible decomposition.
Say $\tilde\rho$ contains $n_i$ copies of the irreducible representation $\rho_i$, 
so that $2n+1=\sum n_i \dim \rho_i$.
We need to do three things:
\begin{enumerate}
\item Restrict to those which are actually in $SO(2n+1)$.
\item Decide whether it lifts to a genuine $(m=0)$ or a twisted $(m=1)$ homomorphism into $Spin(2n+1)$.
\item In each case, decide whether the lifts are fixed by $x$ or form a pair exchanged by $x$.
\end{enumerate}

The issues (1) and (2) can be solved using a bit of basic algebraic topology. 
Each real representation $\rho$ has an associated total Stiefel-Whitney class $w(\rho)$.
The degree-$k$ term of $w(\rho)$ is denoted by $w_k(\rho)$, the $k$-th Stiefel-Whitney class.
We only need $w_1$ and $w_2$, so we regard
$w(\rho)\in \bZ_2[y]/(y^3)$
where $y$ is the generator of $H^1(B\hat\cO ,\bZ_2)=\bZ_2$,
by a slight abuse of notation.
As $w(\rho_1\oplus\rho_2)=w(\rho_1)w(\rho_2)$, we have \begin{equation}
w(\rho)= \prod_i w(\rho_i)^{n_i}.
\end{equation}
So to compute $w(\rho)$, we only need to know $w(\rho_i)$. 
Then \begin{itemize}
\item A real representation $\rho$ (i.e.~a homomorphism to $O$) lifts to an SO representation if and only if $w_1(\rho)=0$.
\item An SO representation lifts to a genuine $(m=0)$ or a twisted $(m=1)$ homomorphism into $Spin(2n+1)$
depending on whether $w(\rho)=1$ or $w(\rho)=1+y^2$.
Equivalently, $w_2(\rho)=my^2$.
\end{itemize}

Any real representation of $\hat\cO $ is a direct sum of 
copies of the strictly real representations $1$, $3$, $2''$, $3'$, $1'$ 
or the underlying real representations of pseudoreal representations $2$, $4$, and $2'$.

The Stiefel-Whitney classes of the underlying real representations of pseudoreal representations
are all trivial, due to the following reasons.
Given a pseudoreal representation $\Gamma\subset Sp(k)\curvearrowright \bH^k$,
we are interested in the behavior of 
$G\subset SO(4k)\curvearrowright \bR^{4k}$,
where  $G\subset [Sp(k)\times Sp(1)]/\{\pm1\} \subset SO(4k)$.
This inclusion fits into the following diagram:
\begin{equation}
\begin{tikzcd}
    Sp(k)\times Sp(1)\arrow[r,hook]\arrow[d,two heads]& Spin(4k)\arrow[d,two heads]\\ \relax
    (Sp(k)\times Sp(1))/\{\pm1\}\arrow[r,hook]& SO(4k),
\end{tikzcd}
\end{equation}
meaning that $G\subset Sp(k)\subset \bH^k$ regarded as 
$G\subset SO(4k)\subset\bR^{4k}$ automatically lifts to $Spin(4k)$.

The Stiefel-Whitney classes of irreducible strictly real representations of $\hat\cO $ are not hard to determine, either.
By definition, $w(1)=1$ and $w(1')=1+y$. 
Next, $w(3)=1$, because the action of $\hat\cO $ to $3$ 
is by definition obtained by reducing the action of $\hat\cO \subset SU(2)$ to $\hat\cO \subset SO(3)$.
To determine $w(3')$, we use $w(1')w(3')=w(2'\otimes 2)$. But a real representation 
obtained by tensoring a quaternionic representation by the defining representation
is automatically a spin representation, because of the same diagram above;
the only difference is that now $\Gamma$ is embedded diagonally to both $Sp(k)$ and $Sp(1)$.
Therefore $w(2'\otimes 2)=1$,
and therefore $w(3')=(1+y)^{-1}=1+y+y^2$.
Applying the same argument to $w(3)w(3')w(2'')=w(4\otimes 2)$, we get $w(2')=1+y$.

The discussions up to this point take care of the issues (1) and (2) raised above.
We still need to discuss the issue (3).

Suppose we are given a homomorphism $\rho:\hat\cO \to SO(2n+1)$
which lifts to a (genuine or twisted) homomorphism $\tilde \rho$ from $\hat\cO $ to $Spin(2n+1)$.
The action of $x$ fixes $\tilde \rho$ up to conjugation by $Spin(2n+1)$ if and only if
there is a $\tilde g\in Spin(2n+1)$ such that \[
\tilde g \tilde\rho(a) \tilde g^{-1} = -\tilde\rho(a),\qquad
\tilde g\tilde \rho(b)\tilde g^{-1} = \tilde \rho(b).
\]
Such a $\tilde g\in Spin(k)$ determines a corresponding $g\in SO(k)$
such that \begin{equation}
g \rho(a) g^{-1}=\rho(a),\qquad g\rho(b) g^{-1}=\rho(b).
\end{equation}

Now, a  representation of a finite group $\Gamma$
on $\bR^k$ has a  decomposition \begin{equation}
\bR^k \otimes \bC = 
\bigoplus \rho_{\bR,i}^{\oplus s_i}
\oplus
\bigoplus (\rho_{\bH,i}\oplus \rho_{\bH,i})^{\oplus t_i}
\oplus 
\bigoplus (\rho_{\bC,i} \oplus \overline{\rho_{\bC,i}})^{\oplus u_i},
\end{equation}
where $\rho_{\bR,\bH,\bC,i}$ list the irreducible representations of $\Gamma$
of the indicated types.
Any $O(k)$ matrix commuting with the $\Gamma$ action 
is in the subgroup \begin{equation}
X:=\prod_i O(s_i) \times \prod_i Sp(t_i) \times \prod_i U(u_i)
\end{equation}
which acts by permuting the copies of the same irreducible representation;
this is a generalization of Schur's lemma from complex representations
to real representations.
Our $g$ is in $SO(k)$,
so we further need a requirement that 
$g=(g_{\bR,i}; g_{\bH,i};g_{\bC,i})$ satisfies 
$\prod_i (\det g_{\bR,i})^{\dim \rho_{\bR,i}} = +1$.

Now, given a $g\in X \cap SO(k)$, we have \begin{equation}
\tilde g\tilde\rho(a)\tilde g^{-1} = \pm \tilde\rho(a) \label{foo}
\end{equation}
with a $+$ or $-$ sign. 
This sign is a representation $X\cap SO(k)\to \bZ_2$.
As such it is locally a constant,
and therefore it only depends on the connected component of $X\cap SO(k)$,
which is just a product of a number of $\bZ_2$'s. 
To explicitly describe these $\bZ_2$'s,
let $h_i$ be an element from the component of $O(s_i)$ disconnected from the identity.
Then an over-complete basis of these $\bZ_2$'s is given by
(1) $h_i$ for $\dim \rho_{\bR,i}$ is even
and 
(2) $h_j h_k$ for $\dim\rho_{\bR,j}$, $\dim\rho_{\bR,k}$ are both odd.
The sign appearing in \eqref{foo} can be found by a direct computation,
and gives  \begin{equation}
\det\rho_i(a) 
\end{equation} for $g=h_i$ and \begin{equation}
\det\rho_j(a)\det\rho_k(a)
\end{equation} for $g=h_jh_k$.

Applying this consideration for $\Gamma=\hat\cO $ and using our knowledge of types of $\rho_i$ 
and also of $\det\rho_i$,
we obtain the following proposition:
\begin{prop}
\label{prop:x-action-Spin}
Given a homomorphism $\rho:\Gamma\to SO(2n+1)$,
consider its complexification $\rho_\bC:\Gamma\to U(2n+1)$
and let $n_i$ be the number of copies $n_i$
of the irreducible representation $\rho_i$ appearing in the direct sum decomposition of $\rho_\bC$.
Let $\tilde \rho$ be a (genuine or twisted) homomorphism of $\Gamma$ to $Spin(2n+1)$
obtained by lifting $\rho$.
Then $\tilde\rho$ is fixed by the action of $x$ 
if and only if \[
\text{$n_{2''}>0$  or ($n_1+n_3>0$ and $n_{1'}+n_{3'}>0$).}
\]
\end{prop}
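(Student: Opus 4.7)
The plan is to complete the scheme set up just before the statement: express the fixing condition as the existence of an element in the centralizer of $\rho$ inside $SO(2n+1)$ whose lift to $Spin(2n+1)$ realizes the required sign, and then enumerate for $\Gamma=\hat\cO $ using the reality types and the values of $\det\rho_i(a)$ collected earlier.

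First, the two lifts $\pm\tilde g\in Spin(2n+1)$ of any $g\in SO(2n+1)$ give the same conjugate $\tilde g\tilde\rho(a)\tilde g^{-1}$, so the formula $\tilde g\tilde\rho(a)\tilde g^{-1}=\epsilon(g)\tilde\rho(a)$ defines a well-defined locally constant sign $\epsilon(g)\in\{\pm 1\}$ on the centralizer $X\cap SO(2n+1)$ of $\rho$. Hence $\tilde\rho$ is fixed by $x$ iff $\epsilon$ takes the value $-1$ on at least one connected component.

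Next, I specialize the description of $X$ to $\hat\cO $. Since $1,1',3,3',2''$ are strictly real and $2,2',4$ are pseudoreal (with no complex pair present), $X=\prod_{i\in\{1,1',3,3',2''\}}O(n_i)\times\prod_{j\in\{2,2',4\}}Sp(n_j/2)$, and only the $O$-factors contribute to $\pi_0$. Intersecting with $SO(2n+1)$ imposes the constraint $\prod_i(\det h_i)^{\dim\rho_i}=1$. Because $\dim\rho_{2''}=2$ is even while $\dim\rho_1=\dim\rho_{1'}=1$ and $\dim\rho_3=\dim\rho_{3'}=3$ are odd, the nontrivial classes in $\pi_0(X\cap SO(2n+1))$ are generated by $h_{2''}$ alone (when $n_{2''}\geq 1$) and by pairs $h_jh_k$ with distinct $j,k\in\{1,1',3,3'\}$ (when $n_j,n_k\geq 1$); any other admissible product reduces to these modulo the identity component.

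Finally I apply the sign formulas $\epsilon(h_i)=\det\rho_i(a)$ and $\epsilon(h_jh_k)=\det\rho_j(a)\det\rho_k(a)$ from the preamble, together with the values $\det\rho_1(a)=\det\rho_3(a)=+1$ and $\det\rho_{1'}(a)=\det\rho_{3'}(a)=\det\rho_{2''}(a)=-1$. Then $h_{2''}$ always gives $\epsilon=-1$, while $h_jh_k$ gives $\epsilon=-1$ exactly when one of $j,k$ lies in $\{1,3\}$ and the other in $\{1',3'\}$. Since $\epsilon$ is multiplicative on $\pi_0$, no product of these generators can yield $-1$ unless at least one of them already does. Therefore $\tilde\rho$ is fixed by $x$ iff $n_{2''}\geq 1$, or one can pair a strictly real summand from $\{1,3\}$ with one from $\{1',3'\}$, i.e.\ $n_1+n_3\geq 1$ and $n_{1'}+n_{3'}\geq 1$, which is the claimed condition. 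The main obstacle is purely organizational: ensuring that the enumeration of the generators of $\pi_0(X\cap SO(2n+1))$ and of their signs is exhaustive. Since all ingredients are already tabulated in the preceding paragraphs, the verification boils down to a finite case check.
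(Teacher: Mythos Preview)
Your proposal is correct and follows exactly the approach the paper intends: the paper's own ``proof'' is the one-line remark that the proposition follows by applying the preceding analysis of $\pi_0(X\cap SO(k))$ and the sign formulas $\epsilon(h_i)=\det\rho_i(a)$, $\epsilon(h_jh_k)=\det\rho_j(a)\det\rho_k(a)$ to the specific reality types and determinant values of the irreducibles of $\hat\cO$, and you have carried out precisely that finite case check.
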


\subsection{The proof of the refined conjecture}

With the preparations done, we can finally proceed to the proof of Theorem~\ref{thm:bar}.

\begin{prop}
\label{prop:Sp-refined-counting}
$\psi_{\bZ_2,0}(\hat\cO ,Sp(n))$ can be identified with the sets of integer solutions to \[
2k_1+2n_2 + 6k_3+ 4n_4 + 6k_{3'} + 4k_{2'}+ 2k_{1'}+4k_{2''}=2n.
\] The action of $x$ is given by \[
(k_1,n_2,k_3,n_4,k_{3'},n_{2'},k_{1'};k_{2''})
\mapsto 
(k_{1'},n_{2'},k_{3'},n_4,k_{3},n_{2},k_{1};k_{2''}).
\]

Similarly, 
$\psi_{\bZ_2,1}(\hat\cO ,Sp(n))$ can be identified with the sets of integer solution to \[
2n_{1} + 4 n_{2} + 6 n_{3} + 8k_{4} + 2n_{2''}=2n,
\]
and the action of $x$ is trivial.
\end{prop}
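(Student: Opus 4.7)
The plan is to read Proposition~\ref{prop:Sp-refined-counting} as a bookkeeping consequence of the reality-type analysis already carried out for the irreducible representations of $\hat\cO $ and their twisted analogues. I would handle the two cases $m=0$ and $m=1$ separately, each time reducing to the quaternionic lifting criterion recalled at the start of Section~\ref{sec:BC}.

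For $m=0$: by Remark~\ref{rem:triv}, $\psi_{\bZ_2,0}(\hat\cO ,Sp(n))=\psi(\hat\cO ,Sp(n))$, so I enumerate quaternionic representations of $\hat\cO $ of quaternionic dimension $n$. The criterion says that a complex representation with irreducible multiplicities $n_i$ lifts to a quaternionic one iff $n_i\in 2\bZ$ for each strictly real $\rho_i$ and $n_i=n_{\bar\imath}$ for each complex pair. Every irreducible of $\hat\cO $ is self-dual, with $1,3,1',3',2''$ strictly real and $2,4,2'$ pseudoreal. Setting $n_1=2k_1,\ n_3=2k_3,\ n_{1'}=2k_{1'},\ n_{3'}=2k_{3'},\ n_{2''}=2k_{2''}$ and keeping $n_2,n_4,n_{2'}$ arbitrary, the dimension constraint $\sum_i n_i\dim\rho_i=2n$ reads exactly as stated. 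The action of $x$ is $\rho_i\mapsto\rho_i\otimes 1'$; by the McKay correspondence this implements the nontrivial diagram automorphism of the extended $E_7$ graph, exchanging $1\leftrightarrow 1',\ 2\leftrightarrow 2',\ 3\leftrightarrow 3'$ and fixing $4$ and $2''$. Expressed in the coordinates above, this is the claimed permutation.

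For $m=1$: a $(\bZ_2,1)$-twisted homomorphism is a pair $(\tilde\rho(a),\tilde\rho(b))\in Sp(n)^2$ with $-\tilde\rho(a)^4=\tilde\rho(b)^3=(\tilde\rho(a)\tilde\rho(b))^2$. The substitution $\rho(a):=-i\tilde\rho(a)$, $\rho(b):=\tilde\rho(b)$ introduced earlier removes the twist and produces a genuine complex representation of $\hat\cO $ on $\bC^{2n}$, inducing a bijection with the set of ordinary complex representations of $\hat\cO $ of complex dimension $2n$ admitting a compatible quaternionic structure, when expressed in terms of the twisted irreducibles $\tilde 1,\tilde 2,\ldots,\tilde 2''$. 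Their reality types, determined earlier, are: $(\tilde 1,\tilde 1'),(\tilde 2,\tilde 2'),(\tilde 3,\tilde 3')$ are complex conjugate pairs, $\tilde 4$ is strictly real, and $\tilde 2''$ is pseudoreal. Applying the same lifting criterion forces equal multiplicities $n_1,n_2,n_3$ for the three pairs, $n_{\tilde 4}=2k_4$, and leaves $n_{\tilde 2''}$ free; the dimension constraint then collapses to $2n_1+4n_2+6n_3+8k_4+2n_{2''}=2n$. The action of $x$ is still $\tilde\rho\mapsto\tilde\rho\otimes 1'$ and performs the same end-swap of labels; since it exchanges each pair whose multiplicities have already been identified, and fixes $\tilde 4$ and $\tilde 2''$, the induced action on the parameter tuple is trivial.

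I do not anticipate a substantive obstacle: all nontrivial ingredients (the reality tables for $\rho_i$ and for $\tilde\rho_i$, the twist-removing substitution and its compatibility with $Sp(n)$-conjugation, and the McKay identification of tensoring with $1'$ with the $E_7$ diagram automorphism) have already been established in the preceding subsections. The only mild care required is to verify that the parametrization descends through $Sp(n)$-conjugation, which is a routine generalization of Schur's lemma to the quaternionic setting and proceeds identically for both values of $m$ once the twist has been absorbed.
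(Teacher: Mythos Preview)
Your proposal is correct and follows essentially the same route as the paper: apply the quaternionic lifting criterion to the (twisted) irreducible decomposition, using the reality tables for $\rho_i$ and $\tilde\rho_i$ already established, and read off the $x$-action as tensoring with $1'$. If anything, you are more careful than the paper's terse argument, explicitly noting all five strictly real irreducibles in the $m=0$ case and spelling out why the $x$-action becomes trivial on the parameter tuple for $m=1$.
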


\begin{proof}
For a homomorphism $\rho:\hat\cO \to Sp(n)$,
let $\rho_\bC:\hat\cO \to U(2n)$ be the homomorphism obtained by composing with $Sp(n)\to U(2n)$.
Let $n_i$ be the number of copies of irreducible representation $\rho_i$ in the irreducible decomposition of $\rho_\bC$.
As $\rho$ is pseudoreal, $n_i$ for $i=1,3,1',2''$ are even.
We let $n_i=2k_i$ for these cases.
The action of $x$ can be inferred from the data already given above.

For a twisted homomorphism $\rho$ from $\hat\cO $ to $Sp(n)$,
let $n_i$ be the number of copies of the irreducible twisted representation $\tilde\rho_i$.
As $\rho$ is pseudoreal, $n_1=n_{1'}$, $n_2=n_{2'}$, $n_3=n_{3'}$, and $n_4$ is even.
Writing $n_4=2k_4$, we obtain the integer equation given above.
The action of $x$ can also be inferred from the data already given.
\end{proof}

\begin{prop}
\label{prop:Spin-refined-counting}

$
\psi_{\bZ_2,m}^\text{fixed by $x$}(\hat\cO ,Spin(2n+1))
$  can be identified with the sets of integer solution to \[
n_1+4k_2 + 3n_3+8k_4+3n_{3'}+4k_{2'}+n_{1'}+2n_{2''}=2n+1,
\] with the condition  \[
n_{1'}-n_{3'}+n_{2''} = 2m \mod 4,
\] and
\[
\text{$n_{2''}>0$  or ($n_1+n_3>0$ and $n_{1'}+n_{3'}>0$).}
\]

$
\psi_{\bZ_2,m}^\text{not fixed by $x$}(\hat\cO ,Spin(2n+1))
$ modulo the action of $x$ can be identified with the sets of integer solution to \[
n_1+4k_2 + 3n_3+8k_4+3n_{3'}+4k_{2'}+n_{1'}+2n_{2''}=2n+1,
\] with the condition \[
n_{1'}-n_{3'}+n_{2''} = 2m \mod 4,
\] and  \[
\text{$n_{2''}=0$  and  ($n_1+n_3=0$ or $n_{1'}+n_{3'}=0$).}
\]
\end{prop}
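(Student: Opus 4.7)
The plan is to parametrize $(\bZ_2,m)$-twisted homomorphisms $\tilde\rho:\hat\cO\to Spin(2n+1)$ up to $Spin$-conjugation by their image $\rho:\hat\cO\to SO(2n+1)$ under the surjection $Spin(2n+1)\twoheadrightarrow SO(2n+1)$, and then to characterize the admissible $\rho$'s by the multiplicities of the eight complex irreducibles in $\rho_\bC$. Since $\bZ_2\subset Spin(2n+1)$ is central, $Spin$-conjugation descends to $SO$-conjugation, and the map $[\tilde\rho]\mapsto[\rho]$ is well-defined and surjective onto the $\rho$'s that admit a $(\bZ_2,m)$-lift. For a fixed $\rho$ the set of such lifts is a torsor over $H^1(\hat\cO,\bZ_2)=\langle x\rangle$, and lifting any $g\in SO(2n+1)$ to $Spin(2n+1)$ shows that the fiber of $[\tilde\rho]\mapsto[\rho]$ contains one element when $[x\tilde\rho]=[\tilde\rho]$ (the fixed case) and two elements otherwise. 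This explains the dichotomy in the proposition and reduces both statements to a combinatorial characterization of the relevant $\rho$'s.

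A homomorphism $\rho:\hat\cO\to SO(2n+1)$ is determined up to conjugation by multiplicities $(n_i)$ with the pseudoreality constraints $n_2=2k_2$, $n_4=2k_4$, $n_{2'}=2k_{2'}$ and the dimension equation $n_1+4k_2+3n_3+8k_4+3n_{3'}+4k_{2'}+n_{1'}+2n_{2''}=2n+1$. The image of $\rho$ lies in $SO(2n+1)$ iff $w_1(\rho)=0$, and such $\rho$ lifts to a $(\bZ_2,m)$-twisted $Spin$-homomorphism iff $w_2(\rho)=my^2$. Multiplicativity of the total Stiefel--Whitney class together with the values computed just above ($w=1$ for $1,3,2,4,2'$; $w(1')=w(2'')=1+y$; $w(3')=1+y+y^2$) yields
\[
w(\rho)=(1+y+y^2)^{n_{3'}}(1+y)^{n_{1'}+n_{2''}}\pmod{y^3},
\]
so $w_1\equiv n_{3'}+n_{1'}+n_{2''}$ and $w_2\equiv\binom{n_{3'}+1}{2}+n_{3'}(n_{1'}+n_{2''})+\binom{n_{1'}+n_{2''}}{2}\pmod{2}$. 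A short case split on the parities of $n_{3'}$ and $n_{1'}+n_{2''}$ then shows that the joint condition $(w_1,w_2)=(0,m)$ collapses to the single $\bZ_4$-valued congruence $n_{1'}-n_{3'}+n_{2''}\equiv 2m\pmod{4}$.

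Finally, Proposition~\ref{prop:x-action-Spin} partitions these $\rho$'s into the fixed locus ($n_{2''}>0$, or both $n_1+n_3>0$ and $n_{1'}+n_{3'}>0$) and its complement, and combining with the first step yields the two statements of the proposition. The main obstacle is the Stiefel--Whitney computation: the collapse of two $\pmod{2}$ conditions into a single $\pmod{4}$ congruence is not a priori obvious and relies on the case analysis. A secondary point to verify carefully in the first step is that the centralizer $Z_{SO(2n+1)}(\rho(\hat\cO))$ lifts to $Spin(2n+1)$ and acts on the fiber of lifts exactly through $\langle x\rangle$, which follows from the centrality of the kernel $\bZ_2\subset Spin(2n+1)$ but deserves an explicit statement.
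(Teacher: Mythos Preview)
Your proposal is correct and follows essentially the same approach as the paper's own proof: parametrize by the underlying $SO(2n+1)$-homomorphism via multiplicities $(n_i)$, impose the pseudoreality and dimension constraints, use $w_1$ and $w_2$ (computed in the preceding subsection) to isolate the $SO$ and $m$-twisted $Spin$-lifting conditions, and then invoke Proposition~\ref{prop:x-action-Spin} to separate the fixed and non-fixed loci. The main difference is that you carry out explicitly the step the paper leaves to the reader, namely reducing the pair of conditions $w_1(\rho)=0$, $w_2(\rho)=my^2$ to the single congruence $n_{1'}-n_{3'}+n_{2''}\equiv 2m\pmod 4$; your parity case-split on $n_{3'}$ and $n_{1'}+n_{2''}$ is the natural way to do this and checks out.
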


\begin{proof}
For a homomorphism $\rho:\hat\cO\to SO(2n+1)$, let $\rho_\bC:\hat\cO\to U(2n+1)$ 
be its complexification.
Let $n_i$ be the number of copies of $\rho_i$ in $\rho_\bC$.
Then, $n_i$ needs to be even when $\rho_i$ is pseudoreal, and therefore 
$n_2=2k_2$, $n_4=2k_4$, $n_{2'}=2k_{2'}$, from which 
we find $n_1+4k_2 + 3n_3+8k_4+3n_{3'}+4k_{2'}+n_{1'}+2n_{2''}=2n+1$.
From the condition that $\det\rho=0$,
we find that $n_{1'}+n_{3'}+n_{2''}$ is even.
Under this condition, the action of $x$ can be studied using Proposition~\ref{prop:x-action-Spin},
resulting in the statements of this proposition.
\end{proof}

A slight rephrasing of  our main theorem, Theorem~\ref{thm:bar},
is the following:
\begin{prop}
\label{prop:intermediate}
We have \[
\dim V^e_{\bZ_2,m}(\hat\cO ,Sp(n))
=\dim V^m_{\bZ_2,e}(\hat\cO ,Spin(2n+1))
\] for all $e=0,1$ and $m=0,1$.
\end{prop}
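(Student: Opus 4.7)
The plan is to reduce Proposition~\ref{prop:intermediate} to four explicit generating-function identities, one for each pair $(e,m)\in\{0,1\}^2$, and then establish these by direct algebraic manipulation in the style of Propositions~\ref{prop:K-formula}, \ref{prop:K-formula-2}, and \ref{prop:K-formula-3}. The starting point is Proposition~\ref{prop:vec-to-combinatorics}, rewritten as
\[
\dim V^e_{\bZ_2,m}(\hat\cO,\sG) = \tfrac12\bigl(\#\psi_{\bZ_2,m}(\hat\cO,\sG) + (-1)^e\,\#\psi^{\text{fixed}}_{\bZ_2,m}(\hat\cO,\sG)\bigr),
\]
which shows that it suffices to compute the total count and the fixed-point count separately on each side and to match them appropriately under $e\leftrightarrow m$.

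On the $Sp$ side I would use Proposition~\ref{prop:Sp-refined-counting}. The case $m=1$ is transparent: the action of $x$ is trivial, so $\dim V^1_{\bZ_2,1}(\hat\cO,Sp(n))=0$ and $\dim V^0_{\bZ_2,1}(\hat\cO,Sp(n))=\#\psi_{\bZ_2,1}(\hat\cO,Sp(n))$. For $m=0$, the fixed-point set is cut out by $k_1=k_{1'}$, $n_2=n_{2'}$, $k_3=k_{3'}$, producing a neat rational generating function in $q$. On the $Spin$ side I would apply Proposition~\ref{prop:Spin-refined-counting}. The mod-$4$ condition $n_{1'}-n_{3'}+n_{2''}\equiv 2m\pmod 4$ is handled by averaging against the four characters of $\bZ/4\bZ$, i.e.\ inserting $\tfrac14\sum_{c=0}^3 i^{c(n_{1'}-n_{3'}+n_{2''}-2m)}$ inside the generating function; this is exactly analogous to the $\tfrac12\sum_{a=0}^1(-1)^{\cdots}$ trick used in Section~\ref{sec:BC}. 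The ``fixed by $x$'' condition is handled by inclusion--exclusion on the complementary event ``$n_{2''}=0$ and ($n_1+n_3=0$ or $n_{1'}+n_{3'}=0$)'', which splits into three pieces corresponding to setting combinations of those variables to zero.

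With these two ingredients, each of the four dimensions on each side becomes an explicit finite sum over $(a,b_0,b_1,c)\in\{0,1\}^3\times\bZ/4\bZ$ of products of geometric-series factors of the form $1/(1-\eta q^k)$ where $\eta$ is a root of unity and $k\in\{1,2,3,4,6,8,12\}$. The claimed identity $\dim V^e_{\bZ_2,m}(Sp)=\dim V^m_{\bZ_2,e}(Spin)$ then becomes the assertion that two such sums agree after the shift $q\mapsto q$, $u\leftrightarrow v$ on auxiliary variables tracking $e$ and $m$. The combined statement is cleanest when packaged as a single bivariate identity
\[
F_{Sp}(q,u,v)\;=\;q^{-1}F_{Spin}(q,v,u),
\]
where $F_\sG(q,u,v)=\sum_{n,e,m} q^{\dim(\cdot)}u^ev^m\dim V^e_{\bZ_2,m}(\hat\cO,\sG)$; the cases $(e,m)=(1,1)$ (both vanish: the $Sp$ side by triviality of $x$ on $\psi_{\bZ_2,1}$, the $Spin$ side by the parity mismatch that $n_{1'}-n_{3'}+n_{2''}\equiv 2\pmod 4$ combined with the other constraints forces the weight to be even, contradicting $2n+1$) serve as a sanity check.

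The main obstacle I expect is not conceptual but combinatorial: the inclusion--exclusion on the $Spin$ side produces pieces in which some of $n_1,n_3,n_{1'},n_{3'},n_{2''}$ are forced to vanish, and for each such piece the interaction with the mod-$4$ congruence changes. One must verify that when everything is summed, the generating function factorizes into shapes matching those arising from the $Sp$ side under the swap $e\leftrightarrow m$. This amounts to verifying a handful of rational-function identities along the lines of Proposition~\ref{prop:K-formula-3}; I expect each of them to follow after clearing denominators and using $1\pm q^{2k-1}$ and $1-q^{4k-2}$ identities, but there is no shortcut and the bookkeeping is the real difficulty.
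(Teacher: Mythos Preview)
Your plan is sound and follows essentially the same route as the paper: reduce via Proposition~\ref{prop:vec-to-combinatorics} to combinatorial counts, encode the mod-$4$ congruence on the $Spin$ side by averaging over fourth roots of unity, handle the fixed/not-fixed dichotomy by inclusion--exclusion, and verify the resulting rational-function identities in the spirit of Propositions~\ref{prop:K-formula}--\ref{prop:K-formula-3}. The paper carries this out explicitly: the relevant generating functions are assembled in Proposition~\ref{prop:genY}, and the needed identities are Propositions~\ref{prop:K-formula-4}, \ref{prop:A}, \ref{prop:X}, and~\ref{prop:Y}.

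There is one economizing move in the paper that you do not exploit. Rather than verifying all four cases $(e,m)\in\{0,1\}^2$ independently, the paper observes that the sum of the $(0,0)$ and $(1,0)$ equations is exactly $N(\hat\cO,Sp(n))=N(\hat\cO,SO(2n+1))$, already established in Theorem~\ref{thm:mainA}; so only one of those two needs a fresh proof. Combined with the triviality of the $x$-action on $\psi_{\bZ_2,1}(\hat\cO,Sp(n))$ (which you noted), this cuts the work from four identities to three. Your bivariate packaging $F_{Sp}(q,u,v)=q^{-1}F_{Spin}(q,v,u)$ is conceptually pleasant but does not by itself reduce the number of independent checks. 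A minor point: your parity argument for the vanishing at $(e,m)=(1,1)$ on the $Spin$ side should treat the two branches of the ``or'' separately --- when $n_{1'}+n_{3'}=0$ the congruence $n_{1'}-n_{3'}\equiv 2\pmod 4$ fails outright, while when $n_1+n_3=0$ one obtains the even-weight contradiction you describe.
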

\begin{proof}
Using Proposition~\ref{prop:vec-to-combinatorics},
the statement of this proposition is seen to be equivalent to 
the following four statements: \begin{itemize}
\item 
For $(e,m)=(0,0)$, we have 
\begin{multline}
\#\psi^\text{fixed}_{\bZ_2,0}(\hat\cO ,Sp(n))+
\frac12\#\psi^\text{not fixed}_{\bZ_2,0}(\hat\cO ,Sp(n)) \\
=
\#\psi^\text{fixed}_{\bZ_2,0}(\hat\cO ,Spin(2n+1))+
\frac12\#\psi^\text{not fixed}_{\bZ_2,0}(\hat\cO ,Spin(2n+1)),
\label{eqA}
\end{multline}
\item
For $(e,m)=(1,0)$, we have \begin{equation}
\frac12\#\psi^\text{not fixed}_{\bZ_2,0}(\hat\cO ,Sp(n))
=
\#\psi^\text{fixed}_{\bZ_2,1}(\hat\cO ,Spin(2n+1))+
\frac12\#\psi^\text{not fixed}_{\bZ_2,1}(\hat\cO ,Spin(2n+1)),
\label{eqB}
\end{equation}
\item
For $(e,m)=(0,1)$, we have \begin{equation}
\#\psi^\text{fixed}_{\bZ_2,1}(\hat\cO ,Sn(n))+
\frac12\#\psi^\text{not fixed}_{\bZ_2,1}(\hat\cO ,Sp(n))
=
\frac12\#\psi^\text{not fixed}_{\bZ_2,0}(\hat\cO ,Spin(2n+1)),
\label{eqC}
\end{equation}
\item
For $(e,m)=(1,1)$, we have \begin{equation}
\frac12\#\psi^\text{not fixed}_{\bZ_2,1}(\hat\cO ,Sp(n))
=
\frac12\#\psi^\text{not fixed}_{\bZ_2,1}(\hat\cO ,Spin(2n+1)).
\label{eqD}
\end{equation}
\end{itemize}
Now, note that \[
\text{the left hand side of \eqref{eqA}}+
\text{the left hand side of \eqref{eqB}}
= N(\hat\cO ,Sp(n))
\] and that \[
\text{the right hand side of \eqref{eqA}}+
\text{the right hand side of \eqref{eqB}}
=N(\hat\cO ,SO(2n+1)),
\]
and that  the equality $N(\hat\cO ,Sp(n))=N(\hat\cO ,SO(2n+1))$ was already proved in Theorem~\ref{thm:mainA}.
Therefore, we only have to prove either \eqref{eqA} or \eqref{eqB}.

Note also that,
from Proposition~\ref{prop:Sp-refined-counting},
$\frac12\#\psi^\text{not fixed}_{\bZ_2,1}(\hat\cO ,Sp(n))=0$.
Therefore, Eq.~\eqref{eqC} is equivalent to \begin{equation}
\#\psi^\text{fixed}_{\bZ_2,1}(\hat\cO ,Sn(n))
=
\frac12\#\psi^\text{not fixed}_{\bZ_2,0}(\hat\cO ,Spin(2n+1)),
\label{eqX}
\end{equation} and Eq.~\eqref{eqD} is equivalent to \begin{equation}
\#\psi^\text{not fixed}_{\bZ_2,1}(\hat\cO ,Spin(2n+1))=0.
\label{eqY}
\end{equation} 

We can conclude the proof of this proposition and therefore the main theorem Theorem~\ref{thm:bar}, then,
by proving \eqref{eqA}, \eqref{eqX}, and \eqref{eqY},
which are Propositions~\ref{prop:A}, \ref{prop:X}, \ref{prop:Y} given below, respectively.
\end{proof}

To prove Propositions~\ref{prop:A}, \ref{prop:X}, and \ref{prop:Y},
we use generating functions as we did in Sec.~\ref{sec:BC}:

\begin{prop}
\label{prop:genY}
We have the following generating functions for $(e,m)=(0,0),(0,1),(1,1)$, which will be used in the proofs of Proposition$~\ref{prop:A}, \ref{prop:X}, \ref{prop:Y}$, respectively.
\begin{itemize}
\item For $(e,m)=(0,0)$,
\begin{multline*}
\sum_{n=0}^\infty q^{2n} \left(\#\psi^\text{fixed}_{\bZ_2,0}(\hat\cO ,Sp(n))+
\frac12\#\psi^\text{not fixed}_{\bZ_2,0}(\hat\cO ,Sp(n)) \right)\\
=\frac1{2}\left( \frac1{(1-q^2)^4}\frac1{(1-q^4)^2}\frac1{(1-q^6)^2} + \frac1{(1-q^4)^4}\frac1{1-q^{12}} \right),
\end{multline*}
\begin{multline*}
\sum_{n=0}^\infty q^{2n+1} \left(\#\psi^\text{fixed}_{\bZ_2,0}(\hat\cO ,Spin(2n+1))+
\frac12\#\psi^\text{not fixed}_{\bZ_2,0}(\hat\cO ,Spin(2n+1)) \right)\\
=\frac1{2}\sum_{a=0}^1\frac1{4}\sum_{b=0}^3 (-1)^a \frac1{1-(-1)^{a}q}\frac1{1-(-1)^{a}i^{b}q}\frac1{1-(-1)^{2a}i^{b}q^2}\frac1{1-(-1)^{3a}q^3}\frac1{1-(-1)^{3a}i^{-b}q^3} \\
\times\frac1{(1-(-1)^{4a}q^4)^2}\frac1{1-(-1)^{8a}q^8}.
\end{multline*}

\item For $(e,m)=(0,1)$,
\begin{flalign*}
&\hspace{9pt}\sum_{n=0}^\infty q^{2n} \left(\#\psi^\text{fixed}_{\bZ_2,1}(\hat\cO ,Sp(n))\right)
=\frac1{(1-q^2)^2}\frac1{1-q^4}\frac1{1-q^6}\frac1{1-q^8},&
\end{flalign*}
\begin{multline*}
\sum_{n=0}^\infty q^{2n+1} \left(\frac12\#\psi^\text{not fixed}_{\bZ_2,0}(\hat\cO ,Spin(2n+1))\right)\\
=\frac1{2}\sum_{a=0}^1\frac1{4}\sum_{b=0}^3 (-1)^{a}\left(\frac1{1-i^b(-1)^aq}\frac1{1-i^{-b}(-1)^{3a}q^3} + \frac1{1-(-1)^aq}\frac1{1-(-1)^{3a}q^3} - 1\right) \\
\times\frac1{(1-(-1)^{4a}q^4)^2}\frac1{1-(-1)^{8a}q^8}.
\end{multline*}

\item For $(e,m)=(1,1)$,
\begin{multline*}
\sum_{n=0}^\infty q^{2n+1} \left(\frac12\#\psi^\text{not fixed}_{\bZ_2,1}(\hat\cO ,Spin(2n+1))\right)\\
=\frac1{2}\sum_{a=0}^1\frac1{4}\sum_{b=0}^3 (-1)^{a}i^{-2b}\left(\frac1{1-i^b(-1)^aq}\frac1{1-i^{-b}(-1)^{3a}q^3} + \frac1{1-(-1)^aq}\frac1{1-(-1)^{3a}q^3} - 1\right) \\
\times\frac1{(1-(-1)^{4a}q^4)^2}\frac1{1-(-1)^{8a}q^8}.
\end{multline*}
\end{itemize}
\end{prop}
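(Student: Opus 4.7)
The plan is to apply Propositions~\ref{prop:Sp-refined-counting} and~\ref{prop:Spin-refined-counting} to recast each quantity in Proposition~\ref{prop:genY} as a sum over integer solutions to a linear equation, carrying in the $Spin$ case an auxiliary modular constraint and a combinatorial condition selecting the $x$-fixed locus; the generating functions are then assembled by the same geometric-series manipulations already used in Section~\ref{sec:BC}.

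For the $Sp(n)$ entries the work is direct. The generating function for the total $\#\psi_{\bZ_2,0}(\hat\cO,Sp(n))$ was already computed in Proposition~\ref{prop:genX}, and the $x$-action from Proposition~\ref{prop:Sp-refined-counting} swaps primed with unprimed labels, so the $x$-fixed locus is parametrized by $(k_1,n_2,k_3,n_4,k_{2''})$ with the exponents of the swapped variables doubled; this yields $1/[(1-q^4)^4(1-q^{12})]$, and the Burnside-type identity $\#\text{fixed}+\tfrac12\#\text{not fixed}=\tfrac12(\#\psi+\#\text{fixed})$ gives the stated formula at $(e,m)=(0,0)$. For $(e,m)=(0,1)$, Proposition~\ref{prop:Sp-refined-counting} says that the $x$-action on $\psi_{\bZ_2,1}(\hat\cO,Sp(n))$ is trivial, so the whole set is fixed and the linear equation $2n_1+4n_2+6n_3+8k_4+2n_{2''}=2n$ reads off $1/[(1-q^2)^2(1-q^4)(1-q^6)(1-q^8)]$ directly.

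For the $Spin(2n+1)$ entries, three tools need to be combined. (i)~The modular constraint $n_{1'}-n_{3'}+n_{2''}\equiv 2m\pmod 4$ is enforced by $\tfrac14\sum_{b=0}^{3}i^{-2mb}\,i^{b(n_{1'}-n_{3'}+n_{2''})}$, attaching characters $i^{b},i^{-b},i^{b}$ to $n_{1'},n_{3'},n_{2''}$. (ii)~The restriction to odd total degree, i.e.\ $N=2n+1$, is enforced by $\tfrac12\sum_{a=0}^{1}(-1)^a\bigl[\,q\mapsto(-1)^{a}q\,\bigr]$. (iii)~The fixed/not-fixed dichotomy is handled by inclusion--exclusion on the not-fixed locus $A\cup B$, where $A=\{n_{2''}=n_1=n_3=0\}$ and $B=\{n_{2''}=n_{1'}=n_{3'}=0\}$. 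For $(e,m)=(0,0)$, the observation that $\#\text{fixed}+\tfrac12\#\text{not fixed}$ equals the total number of underlying $SO$-homomorphism solutions, because each fixed solution lifts to one $x$-invariant Spin class and each not-fixed solution lifts to a two-element $x$-orbit weighted by $\tfrac12$, means that only (i) and (ii) are needed, and together they reproduce the stated eight-factor product. For $(e,m)=(0,1)$ and $(1,1)$, applying (i)--(iii) produces precisely the three-term bracket $|A|+|B|-|A\cap B|$ appearing in the formulas, with the common overall factor $1/[(1-q^4)^2(1-q^8)]$ coming from the variables $k_2,k_{2'},k_4$ that are free in all three pieces.

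The main obstacle I anticipate is organizational rather than conceptual: carefully matching the characters $i^{\pm b}$ and the substitutions $q^k\mapsto(-1)^{ka}q^k$ against the explicit denominators such as $1-(-1)^{3a}i^{-b}q^3$ appearing in the statement, and verifying that the $b$-independent contributions coming from $B$ and $A\cap B$ survive the Fourier sum $\tfrac14\sum_{b}i^{-2mb}$ only when $m=0$, thereby producing the cancellations required for the $(e,m)=(1,1)$ formula. Beyond this bookkeeping, no new conceptual ingredient beyond those already used in Section~\ref{sec:BC} should enter.
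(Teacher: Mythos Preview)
Your proposal is correct and follows essentially the same approach as the paper: the paper proves only the $(e,m)=(0,0)$ case explicitly---using the Burnside-type identity on the $Sp$ side and the observation that $\#\psi^{\text{fixed}}+\tfrac12\#\psi^{\text{not fixed}}$ on the $Spin$ side equals the raw count of integer solutions to the dimension equation with the $\bmod\ 4$ constraint, both enforced by the $\tfrac12\sum_a$ and $\tfrac14\sum_b$ Fourier tricks---and then says the remaining cases are similar. Your sketch actually goes further than the paper in making the $(0,1)$ and $(1,1)$ cases explicit via the inclusion--exclusion $|A|+|B|-|A\cap B|$ on the not-fixed locus, which is exactly what produces the three-term bracket in the stated formulas.
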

\begin{proof}
The basic idea is the same as in Proposition$~\ref{prop:genX}$.We give only the proof of the case when $(e,m)=(0,0)$. The others can be proved in a similar way.

First, we consider the Sp side. From Proposition~\ref{prop:genX}, we know\[
\sum_{n=0}^\infty q^{2n} \left(\#\psi_{\bZ_2,0}(\hat\cO ,Sp(n))\right) = \frac1{(1-q^2)^4}\frac1{(1-q^4)^2}\frac1{(1-q^6)^2}.
\]
In the same way, we can find\[
\sum_{n=0}^\infty q^{2n} \left(\#\psi^\text{fixed}_{\bZ_2,0}(\hat\cO ,Sp(n))\right) = \frac1{(1-q^4)^4}\frac1{1-q^{12}}.
\]
Putting these together, we have\[
\begin{aligned}
&\hspace{10pt} \sum_{n=0}^\infty q^{2n} \left(\#\psi^\text{fixed}_{\bZ_2,0}(\hat\cO ,Sp(n))+
\frac12\#\psi^\text{not fixed}_{\bZ_2,0}(\hat\cO ,Sp(n)) \right) \\
&= \frac1{(1-q^4)^4}\frac1{1-q^{12}} + \frac12\left(\frac1{(1-q^2)^4}\frac1{(1-q^4)^2}\frac1{(1-q^6)^2} - \frac1{(1-q^4)^4}\frac1{1-q^{12}}\right) \\
&= \frac1{2}\left( \frac1{(1-q^2)^4}\frac1{(1-q^4)^2}\frac1{(1-q^6)^2} + \frac1{(1-q^4)^4}\frac1{1-q^{12}} \right).
\end{aligned}
\]

Next, we prove the Spin side. From Proposition~\ref{prop:Spin-refined-counting},We know that\[
\#\psi^\text{fixed}_{\bZ_2,0}(\hat\cO ,Spin(2n+1))+
\frac12\#\psi^\text{not fixed}_{\bZ_2,0}(\hat\cO ,Spin(2n+1))
\]
equals the number of integer solutions to \[
n_1+4k_2 + 3n_3+8k_4+3n_{3'}+4k_{2'}+n_{1'}+2n_{2''}=2n+1,
\] with the condition  \[
n_{1'}-n_{3'}+n_{2''} = 0 \mod 4.
\]
Therefore, the generating function is calculated as follows.\[
\begin{aligned}
&\hspace{10pt} \sum_{n=0}^\infty q^{2n+1} \left(\#\psi^\text{fixed}_{\bZ_2,0}(\hat\cO ,Spin(2n+1))+
\frac12\#\psi^\text{not fixed}_{\bZ_2,0}(\hat\cO ,Spin(2n+1)) \right)\\
&= \frac1{2}\sum_{a=0}^1 (-1)^a\sum_{n=0}^\infty (-1)^{an} q^{n} \left(\#\psi^\text{fixed}_{\bZ_2,0}(\hat\cO ,Spin(n))+
\frac12\#\psi^\text{not fixed}_{\bZ_2,0}(\hat\cO ,Spin(n)) \right)\\
&= \frac1{2}\sum_{a=0}^1 (-1)^a\sum_{n=0}^\infty \sum_{\substack{0\le l_1,\ldots, l_8 \\ l_1+4l_2+3l_3+8l_4+3l_5+4l_6+l_7+2l_8=n \\ (l_7-l_5+l_8)\equiv 0 \mod 4 }} (-1)^{an}q^{n}\\
&= \frac1{2}\sum_{a=0}^1 (-1)^a\sum_{n=0}^\infty \sum_{\substack{0\le l_1,\ldots, l_8 \\ l_1+4l_2+3l_3+8l_4+3l_5+4l_6+l_7+2l_8=n}}\frac1{4}\sum_{b=0}^3 (-1)^{an}i^{b(l_7-l_5+l_8)}q^{n} \\
&= \frac1{2}\sum_{a=0}^1\frac1{4}\sum_{b=0}^3 (-1)^a \frac1{1-(-1)^{a}q}\frac1{1-(-1)^{a}i^{b}q}\frac1{1-(-1)^{2a}i^{b}q^2}\frac1{1-(-1)^{3a}q^3}\frac1{1-(-1)^{3a}i^{-b}q^3} \\
&\hspace{300pt}\times\frac1{(1-(-1)^{4a}q^4)^2}\frac1{1-(-1)^{8a}q^8},
\end{aligned}
\]
which matches the desired expression.
\end{proof}

Now, let us prove the main propositions. We consider a general case to prove the first one, as in the previous section.
\begin{prop}
\label{prop:K-formula-4}
Let $l\in\mathbb{Z}_{\ge 0}$, $k_1, k_2, v_1,\ldots, v_l \in\mathbb{N}$. For $k_1\neq k_2$, set
\begin{equation*}
\left\{
\begin{aligned}
F(q) &= \frac1{1-q^{2k_1+2k_2-2}}\frac1{(1-q^{2k_2-2k_1})^2}\prod_{r=1}^2\frac1{(1-q^{4k_r-2})^2}\prod_{i=1}^l\frac1{1-q^{2v_i}} ,\\
F_0(q) &= \frac1{1-q^{2k_1+2k_2-2}}\frac1{1-q^{4k_2-4k_1}}\prod_{r=1}^2\frac1{1-q^{8k_r-4}}\prod_{i=1}^l\frac1{1-q^{2v_i}} ,\\
\tilde F(q,t) &= \frac1{1-q^{4k_1+4k_2-4}}\frac1{1-q^{4k_2-4k_1}}\frac1{1-tq^{2k_2-2k_1}}\frac1{(1-q^{2k_1-1})(1-tq^{2k_1-1})} \\
&\hspace{190pt}\times\frac1{(1-q^{2k_2-1})(1-t^{-1}q^{2k_2-1})}\prod_{i=1}^l\frac1{1-q^{2v_i}}.
\end{aligned}
\right.
\end{equation*}
Then we have \[
\begin{aligned}
q^{2k_1-1}\frac{F(q)+F_0(q)}{2} = \frac12\sum_{a=0}^1\frac14\sum_{b=0}^3 (-1)^a\tilde F((-1)^aq,i^b).
\end{aligned}
\]
\end{prop}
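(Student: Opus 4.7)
The plan is to mimic the structure of the proofs of Propositions~\ref{prop:K-formula}, \ref{prop:K-formula-2}, \ref{prop:K-formula-3} and reduce everything to an explicit algebraic identity.  First I would observe that the factor $\prod_{i=1}^l (1-q^{2v_i})^{-1}$ is common to $F$, $F_0$ and $\tilde F(q,t)$ for every $t$, so it suffices to prove the identity in the case $l=0$.  After that reduction, the right-hand side splits as an average over $b=0,1,2,3$ of an alternating sum over $a=0,1$, and I would treat each coset $\{0\},\{2\},\{1,3\}$ of the Klein four-group separately.

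For $b=2$, i.e.\ $t=-1$, the factors $(1-q^{2k_r-1})(1+q^{2k_r-1}) = 1-q^{4k_r-2}$ and $(1-tq^{2k_2-2k_1})=1+q^{2k_2-2k_1}$ depend only on even powers of $q$, so $\tilde F(q,-1)=\tilde F(-q,-1)$ and this contribution cancels in the alternating $a$-sum.  For $b=0$ ($t=1$), the same numerator identity used in Proposition~\ref{prop:K-formula-2}, namely
\[
\frac{1}{(1-x)^2(1-y)^2}-\frac{1}{(1+x)^2(1+y)^2}=\frac{4(x+y)(1+xy)}{(1-x^2)^2(1-y^2)^2}
\]
with $x=q^{2k_1-1}$, $y=q^{2k_2-1}$, together with the factorisations $1-q^{4k_1+4k_2-4}=(1-q^{2k_1+2k_2-2})(1+q^{2k_1+2k_2-2})$ and $1-q^{4k_2-4k_1}=(1-q^{2k_2-2k_1})(1+q^{2k_2-2k_1})$, yields exactly $\tfrac18[\tilde F(q,1)-\tilde F(-q,1)]=\tfrac{q^{2k_1-1}}{2}F(q)$.

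For the remaining $b\in\{1,3\}$ terms, the crucial point is that $\tilde F(q,-i)$ is the complex conjugate (as a $\mathbb{Q}(i)$-rational function of $q$) of $\tilde F(q,i)$, so their sum is twice the real part.  Multiplying numerator and denominator of $\tilde F(q,i)$ by the conjugate factors converts the denominator into
\[
(1-q^{4k_1+4k_2-4})(1-q^{4k_2-4k_1})(1+q^{4k_2-4k_1})(1-q^{8k_1-4})(1-q^{8k_2-4}),
\]
and the remaining numerator of $\tilde F(q,i)-\tilde F(-q,i)$ becomes $B-A$ where $A=(1-x)(1-y)Q$, $B=(1+x)(1+y)\bar Q$ and $Q=(1+xy)+i(y-x)$.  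A direct expansion gives $B-A=2(1+xy)[(x+y)+i(x-y)]$, and combining with the $(1\pm iq^{2k_2-2k_1})$ factor from the $\tilde F(q,\pm i)$ denominators one is left to verify the single algebraic identity
\[
(x+y)-\alpha(x-y)=x(1+\alpha^2),\qquad \alpha=q^{2k_2-2k_1}.
\]
This is immediate once one uses $y=\alpha x$, which is precisely the relation $q^{2k_2-1}=q^{2k_2-2k_1}\cdot q^{2k_1-1}$, and produces the factor $(1-q^{2k_1+2k_2-2})^{-1}$ on the way to $F_0(q)$.

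The main obstacle is bookkeeping in step three: one has to keep track of the four denominator factors of the form $1\pm q^{2k_r-1}$ and $1\pm iq^{2k_r-1}$, arrange them as $1-q^{4k_r-2}$ and $1+q^{4k_r-2}$ to produce the $(1-q^{8k_r-4})$ in $F_0$, and then identify correctly which combination of $\mathrm{Re}(B-A)$ and $\alpha\,\mathrm{Im}(B-A)$ appears after clearing the $1-iq^{2k_2-2k_1}$ in the denominator.  Once the algebraic identity $(x+y)-\alpha(x-y)=x(1+\alpha^2)$ is isolated, the proof closes in a line.
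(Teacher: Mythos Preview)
Your argument is correct and follows essentially the same route as the paper's proof: reduce to $l=0$, split the $b$-sum so that $t=-1$ cancels in the alternating $a$-sum, $t=1$ produces the $F(q)$ piece via the identity $\tfrac{1}{(1-x)^2(1-y)^2}-\tfrac{1}{(1+x)^2(1+y)^2}=\tfrac{4(x+y)(1+xy)}{(1-x^2)^2(1-y^2)^2}$, and $t=\pm i$ together produce the $F_0(q)$ piece.  The paper's write-up simply presents the resulting two-term expression in braces without explaining the $b$-decomposition, but the first brace term is exactly your $b=0$ contribution and the second is your $b\in\{1,3\}$ contribution, so the underlying computation is the same; your presentation is just more explicit about where each piece comes from, and the key reduction $(x+y)-\alpha(x-y)=x(1+\alpha^2)$ using $y=\alpha x$ is the heart of the $b=\pm i$ step in both.  (One minor quibble: the sets $\{0\},\{2\},\{1,3\}$ are not cosets of any Klein four-group; you are just partitioning $\mathbb{Z}_4$ according to whether $i^b$ is $1$, $-1$, or $\pm i$.)
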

\begin{proof}
It suffices to prove the case when $l=0$. Then we have\[
\begin{aligned}
&\hspace{10pt}\frac12\sum_{a=0}^1\frac14\sum_{b=0}^3 (-1)^a\tilde F((-1)^aq,i^b) \\
&=\frac12\frac1{1-q^{4k_1+4k_2-4}}\frac1{1-q^{4k_2-4k_1}}\left\{\frac1{1-q^{2k_2-2k_1}}\frac{q^{2k_1-1}(1+q^{2k_2-2k_1})(1+q^{2k_1+2k_2-2})}{(1-q^{4k_1-2})^2(1-q^{4k_2-2})^2} \right.\\
&\hspace{320pt}\left.+ \frac{q^{2k_1-1}(1+q^{2k_1+2k_2-2})}{(1-q^{8k_1-4})(1-q^{8k_2-4})} \right\} \\
&=\frac{q^{2k_1-1}}2\frac1{1-q^{2k_1+2k_2-2}}\left\{\frac1{(1-q^{2k_2-2k_1})^2}\prod_{r=1}^2\frac1{(1-q^{4k_r-2})^2} + \frac1{1-q^{4k_2-4k_1}}\prod_{r=1}^2\frac1{1-q^{8k_r-4}} \right\} \\
&=q^{2k_1-1}\frac{F(q)+F_0(q)}{2}.
\end{aligned}
\]
This completes the proof.
\end{proof}

\begin{prop}
\label{prop:A}
Eq.~\eqref{eqA} holds, i.e.~\begin{multline*}
q\sum_{n=0}^\infty q^{2n} \left(\#\psi^\text{fixed}_{\bZ_2,0}(\hat\cO ,Sp(n))+
\frac12\#\psi^\text{not fixed}_{\bZ_2,0}(\hat\cO ,Sp(n)) \right)\\
=
\sum_{n=0}^\infty q^{2n+1} \left(\#\psi^\text{fixed}_{\bZ_2,0}(\hat\cO ,Spin(2n+1))+
\frac12\#\psi^\text{not fixed}_{\bZ_2,0}(\hat\cO ,Spin(2n+1)) \right).
\end{multline*}
\end{prop}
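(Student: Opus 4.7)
The plan is to apply Proposition~\ref{prop:K-formula-4} with the single parameter choice $(k_1, k_2; l; v_1) = (1, 2; 1; 2)$. The rationale is that on the $Spin(2n+1)$ side of \eqref{eqA}, the variables $n_1$ and $n_3$ (counting copies of the strictly real irreducibles $1$ and $3$, which are the representations creating the nontrivial $\det \rho(a) = \pm 1$ pairing) contribute factors $(1-q)^{-1}$ and $(1-q^3)^{-1}$; these are precisely the $(1-q^{2k_1-1})^{-1}$ and $(1-q^{2k_2-1})^{-1}$ factors of $\tilde F$ with $k_1 = 1$, $k_2 = 2$.

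First I would verify the matching on the Sp side. Substituting $k_1 = 1$, $k_2 = 2$, $v_1 = 2$ into the formulas of Proposition~\ref{prop:K-formula-4} gives
\begin{align*}
F(q) &= \frac{1}{(1-q^2)^4(1-q^4)^2(1-q^6)^2}, &
F_0(q) &= \frac{1}{(1-q^4)^4(1-q^{12})},
\end{align*}
which are precisely the two summands of the $Sp(n)$ generating function in Proposition~\ref{prop:genY}. The prefactor $q^{2k_1-1} = q$ in Proposition~\ref{prop:K-formula-4} accounts for the shift from $q^{2n}$ on the Sp side to $q^{2n+1}$ on the Spin side of the statement of Proposition~\ref{prop:A}.

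Second I would verify the matching on the Spin side. The same parameter choice produces
\begin{equation*}
\tilde F(q, t) = \frac{1}{(1-q^8)(1-q^4)^2(1-tq^2)(1-q)(1-tq)(1-q^3)(1-t^{-1}q^3)}.
\end{equation*}
Substituting $(q, t) \mapsto ((-1)^a q, i^b)$ and using $(-1)^{2a} = 1$, the seven factors of $\tilde F$ become, term by term, the seven factors in the summand of the Spin side generating function recorded in Proposition~\ref{prop:genY}. Averaging via $\tfrac12 \sum_{a=0}^{1} \tfrac14 \sum_{b=0}^{3} (-1)^a$ thus reproduces that generating function exactly.

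With both matchings established, the conclusion of Proposition~\ref{prop:K-formula-4} is the identity claimed in Proposition~\ref{prop:A}. The nontrivial algebraic cancellation has already been absorbed into Proposition~\ref{prop:K-formula-4}, so I do not expect any serious obstacle; what remains is the mechanical task of matching exponents and confirming that no stray factor has been lost between the combinatorial description of Proposition~\ref{prop:Spin-refined-counting} and the generating function of Proposition~\ref{prop:genY}.
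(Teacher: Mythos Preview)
Your proposal is correct and follows exactly the same approach as the paper: both invoke Proposition~\ref{prop:genY} for the explicit generating functions and then apply Proposition~\ref{prop:K-formula-4} with $(k_1,k_2;l;v_1)=(1,2;1;2)$. Your write-up simply makes explicit the factor-by-factor matching that the paper leaves to the reader.
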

\begin{proof}
Both sides are explicitly computed in Proposition~\ref{prop:genY}.
From these expressions, we see that setting $(k_1,k_2;l;v_1)=(1,2;1;2)$ in Proposition~\ref{prop:K-formula-4} yields the desired result.
\end{proof}

\begin{prop}
\label{prop:X}
Eq.~\eqref{eqX} holds, i.e.\[
q\sum_{n=0}^\infty q^{2n} \left(\#\psi^\text{fixed}_{\bZ_2,1}(\hat\cO ,Sp(n))\right)
=
\sum_{n=0}^\infty q^{2n+1} \left(\frac12\#\psi^\text{not fixed}_{\bZ_2,0}(\hat\cO ,Spin(2n+1))\right).
\]
\end{prop}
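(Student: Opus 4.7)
The plan is to establish the generating-function identity directly by comparing the explicit expressions given in Proposition~\ref{prop:genY}. The left-hand side is the compact product $\frac{q}{(1-q^2)^2(1-q^4)(1-q^6)(1-q^8)}$, while the right-hand side is a double sum over $a\in\{0,1\}$ and $b\in\{0,1,2,3\}$. The strategy closely mirrors that of Proposition~\ref{prop:K-formula-4}: exploit the alternating sum in $a$ to kill most contributions, leaving only a single surviving piece that one then matches to the left-hand side by elementary manipulation of products like $(1-q^2)(1+q^2)=1-q^4$.

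First I would pull the prefactor $\frac{1}{(1-q^4)^2(1-q^8)}$ out of the right-hand side sum, since $(-1)^{4a}=(-1)^{8a}=1$ regardless of $a$, so it is independent of both $a$ and $b$. The remaining bracket has three pieces: a ``twisted'' term
\[ T_1(a,b)=\frac{1}{(1-i^b(-1)^aq)(1-i^{-b}(-1)^{3a}q^3)}, \]
an ``untwisted'' term
\[ T_2(a)=\frac{1}{(1-(-1)^aq)(1-(-1)^{3a}q^3)}, \]
and the constant $-1$.

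The key computation is to show that $T_1$ vanishes under the $a$-alternating sum. Using $(-1)^{3a}=(-1)^a$ and pairing $b=0$ with $b=2$, and $b=1$ with $b=3$, a short partial-fractions calculation gives
\[ \frac{1}{4}\sum_{b=0}^{3}T_1(a,b)=\frac{1+q^4}{2(1-q^2)(1-q^6)}+\frac{1+q^4}{2(1+q^2)(1+q^6)}, \]
which is manifestly independent of $a$ (the sign $(-1)^a$ drops out because it always appears squared). Hence $\frac{1}{2}\sum_a(-1)^a \frac{1}{4}\sum_b T_1 = 0$, and similarly the constant $-1$ contributes zero to the alternating $a$-sum. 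Only $T_2$ survives; being $b$-independent, its $b$-average equals itself, and a direct combination yields
\[ \frac{1}{2}\sum_{a=0}^{1}(-1)^a T_2(a)=\frac{1}{2}\!\left(\frac{1}{(1-q)(1-q^3)}-\frac{1}{(1+q)(1+q^3)}\right)=\frac{q(1+q^2)}{(1-q^2)(1-q^6)}. \]

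Combining with the prefactor and using $(1-q^4)^2=(1-q^2)^2(1+q^2)^2$ together with $(1-q^2)(1+q^2)=1-q^4$ yields exactly $\frac{q}{(1-q^2)^2(1-q^4)(1-q^6)(1-q^8)}$, which agrees with $q$ times the left-hand side generating function. The main obstacle is the $T_1$ cancellation: one must verify that after summing over the fourth roots of unity, the dependence on the sign $(-1)^a$ really does drop out. This is where the specific twist by $i^b$ versus $i^{-b}$ is essential, and the partial-fractions step requires some care. An alternative would be to formulate a three-term analogue of Proposition~\ref{prop:K-formula-4} handling this pattern uniformly, but the direct computation sketched above is already efficient enough to establish the result.
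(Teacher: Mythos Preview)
Your proof is correct and follows essentially the same approach as the paper: both factor out $\frac{1}{(1-q^4)^2(1-q^8)}$, argue that only the ``untwisted'' middle term $T_2$ survives the double sum, and then reduce the surviving $a$-alternating sum to $\frac{q(1+q^2)}{(1-q^2)(1-q^6)}$ and simplify. The only difference is expository: the paper asserts in one line that ``only the second term in the parentheses survives the sum over $a$ and $b$,'' whereas you spell out the $T_1$ cancellation by pairing $b\leftrightarrow b+2$ and showing the $b$-averaged result is $a$-independent---a useful verification but not a different argument.
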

\begin{proof}
We start from
\[
\begin{aligned}
&\hspace{10pt}\sum_{n=0}^\infty q^{2n+1} \left(\frac12\#\psi^\text{not fixed}_{\bZ_2,0}(\hat\cO ,Spin(2n+1))\right) \\
&=\frac1{2}\sum_{a=0}^1\frac1{4}\sum_{b=0}^3 (-1)^{a}\left(\frac1{1-i^b(-1)^aq}\frac1{1-i^{-b}(-1)^{3a}q^3} + \frac1{1-(-1)^aq}\frac1{1-(-1)^{3a}q^3} - 1\right) \\
&\hspace{290pt}\times\frac1{(1-(-1)^{4a}q^4)^2}\frac1{1-(-1)^{8a}q^8} .
\end{aligned}
\]
As  only the second term in the parentheses survives the sum over $a$ and $b$,
we can continue the computation above as \[
\begin{aligned}
&= \frac1{8}\frac1{(1-q^4)^2(1-q^8)}\sum_{a=0}^1\sum_{b=0}^3 (-1)^a\frac1{1-(-1)^aq}\frac1{1-(-1)^{3a}q^3} \\
&= \frac1{2}\frac1{(1-q^4)^2(1-q^8)}\frac{2q(1+q^2)}{(1-q^2)(1-q^6)}\\
&= q\sum_{n=0}^\infty q^{2n} \left(\#\psi^\text{fixed}_{\bZ_2,1}(\hat\cO ,Sp(n))\right),
\end{aligned}
\]
which is what we wanted to prove.
\end{proof}
\noindent 
Note that the proof of Proposition~\ref{prop:X} essentially came down to Proposition~\ref{prop:K-formula}.
The following proposition can be proved in a similar manner:
\begin{prop}
\label{prop:Y}
Eq.~\eqref{eqY} holds, i.e.\[
\sum_{n=0}^\infty q^{2n+1} \left(\#\psi^\text{not fixed}_{\bZ_2,1}(\hat\cO ,Spin(2n+1))\right)=0.
\]
\end{prop}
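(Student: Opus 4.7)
The plan is to start from the generating function formula in Proposition~\ref{prop:genY} for $(e,m)=(1,1)$ and show its right-hand side vanishes identically. The first step is routine simplification: since $(-1)^{4a}=(-1)^{8a}=1$, the outer factor $1/[(1-(-1)^{4a}q^4)^2(1-(-1)^{8a}q^8)]$ is independent of $a$ and pulls outside the double sum, while $i^{-2b}=(-1)^b$. What remains is to prove
\[
\sum_{a=0}^1 \sum_{b=0}^3 (-1)^{a+b}\bigl(A_{a,b}(q) + B_a(q) - 1\bigr) = 0,
\]
where $A_{a,b}(q) = [(1-i^b(-1)^a q)(1-i^{-b}(-1)^{3a}q^3)]^{-1}$ and $B_a(q) = [(1-(-1)^a q)(1-(-1)^{3a}q^3)]^{-1}$.

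Both $B_a(q)$ and the constant $-1$ are independent of $b$, so their contributions vanish immediately from $\sum_{b=0}^3 (-1)^b = 0$. It therefore suffices to establish
\[
T(q) := \sum_{a=0}^1 \sum_{b=0}^3 (-1)^{a+b}\, A_{a,b}(q) = 0.
\]

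The key step is to exploit the involution $\sigma:(a,b)\mapsto(a+1\bmod 2,\, b+2\bmod 4)$ of the index set. Using $i^{b+2}(-1)^{a+1} = i^b (-1)^a$ and $i^{-(b+2)}(-1)^{3(a+1)} = i^{-b}(-1)^{3a}$, one sees that $A_{\sigma(a,b)}(q) = A_{a,b}(q)$, while the weight transforms as $(-1)^{(a+1)+(b+2)} = -(-1)^{a+b}$. Since $\sigma$ is a fixed-point-free involution of $\{0,1\}\times\{0,1,2,3\}$, pairing indices gives $T(q) = -T(q)$, hence $T(q) = 0$.

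I do not anticipate any serious obstacle: once the generating function formula is granted, the argument reduces to an algebraic identity driven by a manifest $\bZ_2\times\bZ_4$ symmetry of the formula. A more computational alternative, in the spirit of the proof of Proposition~\ref{prop:X}, would be to carry out the $b$-sum explicitly for each $a$ using the partial-fraction manipulations analogous to those in Proposition~\ref{prop:K-formula}, and then check the cancellation between $a=0$ and $a=1$; the involution argument above is simply the conceptual packaging of that cancellation.
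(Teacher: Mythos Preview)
Your proof is correct and follows essentially the same route as the paper's: both show that each of the three terms in the parentheses cancels separately upon summing over $a$ and $b$. The paper simply asserts this cancellation without further comment, while you supply the details; in particular, your involution $\sigma:(a,b)\mapsto(a{+}1,b{+}2)$ is a clean way to exhibit the cancellation of the $A_{a,b}$ term that the paper leaves implicit.
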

\begin{proof}
\[
\begin{aligned}
&\hspace{10pt}\sum_{n=0}^\infty q^{2n+1} \left(\frac12\#\psi^\text{not fixed}_{\bZ_2,0}(\hat\cO ,Spin(2n+1))\right) \\
&=\frac1{2}\sum_{a=0}^1\frac1{4}\sum_{b=0}^3 (-1)^{a}i^{-2b}\left(\frac1{1-i^b(-1)^aq}\frac1{1-i^{-b}(-1)^{3a}q^3} + \frac1{1-(-1)^aq}\frac1{1-(-1)^{3a}q^3} - 1\right) \\
&= 0,
\end{aligned}
\]
as each of the three terms in the parentheses cancels out upon the summation over $a$ and $b$.
\end{proof}
This concludes the proof of the propositions used in the proof of 
Proposition~\ref{prop:intermediate},
which in turn establishes our main claim in this section, Theorem~\ref{thm:bar}.

\def\arxivfont{\rm}
\bibliographystyle{ytamsalpha}
\bibliography{refs}

\end{document}